\crefname{hypothesis}{Hypothesis}{Hypotheses}
\title{Uniform exponential convergence of SAA with AMIS and asymptotics of its optimal value
}
\author{Wenjin Zhang\thanks{College of Mathematics, Jilin University, Changchun, P. R. China.  (\email{zhangwenjinmails@163.com}).}
\and Yong Li \footnotemark[1] \thanks{Center for Mathematics and Interdisciplinary Sciences and School of Mathematics and Statistics, Northeast Normal University, Changchun, P. R. China.
  (\email{liyongmath@163.com}).}  
}
\newcommand*{\addFileDependency}[1]{
  \typeout{(#1)}
  \@addtofilelist{#1}
  \IfFileExists{#1}{}{\typeout{No file #1.}}
}
\begin{document}

\maketitle

\begin{abstract}
 We discuss in this paper uniform exponential convergence of sample average approximation (SAA) with adaptive multiple importance sampling (AMIS) and asymptotics of its optimal value.
Using a concentration inequality for bounded martingale differences, we obtain a new exponential convergence rate.
To study the asymptotics, we first derive an important functional central limit theorem (CLT) for martingale difference sequences.
Subsequently, exploiting this result with the Delta theorem, we prove the asymptotics of optimal values for SAA with AMIS.
\end{abstract}

\begin{keywords}
 uniform exponential convergence, central limit theorem, adaptive multiple importance sampling, martingale difference sequence, concentration inequality.
\end{keywords}

\begin{AMS}
  90C15, 90C26, 90C30
\end{AMS}

\section{Introduction}
A simple optimization problem is shown as follows:
\begin{eqnarray}\label{1}
\displaystyle\min_{x\in{\mathcal{X}}}{E\big[g\big(x,\theta(\omega)\big)\big]},
\end{eqnarray}
where $g:{\mathcal{X}}\times\Theta\rightarrow\mathbb{R}$ is a real valued function with ${\mathcal{X}}\subset{{\mathbb{R}^m}}$ and $\theta:\Omega\rightarrow\Theta\subset\mathbb{R}^r$ is a random variable on $(\Omega,\mathscr{F},P)$.\par
When confronted with complex probability distributions or expectation values that are challenging to compute directly, it becomes particularly important to find an appropriate approximation method.
Indeed, approximation problems are of interest in many fields such as statistics, machine learning and optimization.
One of the most classical methods is sample average approximation (SAA), which is also well-known as the Monte Carlo method.
It has been widely used and intensively studied in various stochastic optimization problems, see $\cite{31,32,33,34}$ for recent relevant literature.
Specifically, the SAA problem corresponding to $\eqref{1}$ is
\begin{eqnarray}
\displaystyle\min_{x\in{\mathcal{X}}}\frac{1}{n}\displaystyle\sum_{i=1}^{n}g(x,\theta_i).
\nonumber
\end{eqnarray}
Here we usually assume that the probability density function in $\eqref{1}$ is independent of $x$,
and $\theta_1,...,\theta_n$ are samples of $\theta(\omega)$.\par
In a slightly more complicated case, where the probability density function depends on $x$, our immediate idea is to convert it to the classical SAA problem.
We rewrite the expectation function in $\eqref{1}$ in the following form:
\begin{eqnarray}\label{2}
{E[g(x,\theta)]}=\displaystyle\int_{\Theta}{g(x,\theta)}{\phi(x,\theta)}{\mathrm{d}{\theta}}\triangleq\displaystyle\int_{\Theta}{F(x,\theta)}{\mathrm{d}{\theta}},
\nonumber
\end{eqnarray}
where ${\phi(x,\theta)}$ is a probability density function and $F(x,\theta)={g(x,\theta)}{\phi(x,\theta)}$.
If there exists a probability density function $\psi$ such that $\psi(\theta)>0$ for all $\theta\in{\Theta}$, then
\begin{eqnarray}\label{3}
{E[g(x,\theta)]}=\displaystyle\int_{\Theta}\frac{{F(x,\theta)}}{\psi(\theta)}{\psi(\theta)}{\mathrm{d}{\theta}}.
\nonumber
\end{eqnarray}
Thus, the classical SAA method is applied to obtain the following form:
\begin{eqnarray}
\frac{1}{n}\displaystyle\sum_{i=1}^{n}\frac{F(x,\theta_{i})}{\psi(\theta_{i})}.
\nonumber
\end{eqnarray}
Such  an approach is called importance sampling (IS). It is worth noting that the quality of the sample approximation depends on $\psi(\cdot)$.
In other words, a poor choice of $\psi(\cdot)$ is sometimes disastrous, while a successful choice of it performs well.
A comprehensive review of the subject about IS and classical SAA can be found in $\cite[Chapter 5]{7}$.
For an overview of SAA for nonconvex stochastic programs, please refer to $\cite[Chapter 10]{35}$.
\par
In this article, we focus on an approximation method that has become very popular in recent years, namely, adaptive multiple importance sampling(AMIS).
It not only retains the advantages of traditional methods, but also dynamically improves the reliability of sampling.
The expression is as follows:
\begin{eqnarray}\label{eqn33}
\frac{1}{n}\displaystyle\sum_{i=1}^{n}\frac{F(x,\theta_{i})}{\psi_{i}(\theta_{i})},
\end{eqnarray}
where each probability density function $\psi_i(\theta_i)>0$ for all $\theta_i\in{\Theta}$.
Its most immediate advantage is that the samples don't have to be either independent nor identically distributed.
The probability density functions are chosen appropriately as the sampling process varies.
This means that the sampling densities $\psi_i$ are multiple and can differ from each other, where $\psi_i$ may depend on $\theta_1,...,\theta_{i-1}$.
Subsequently, $\theta_i$ is sampled from $\psi_i$.

This approach, proposed by Corneut $\cite{15}$, adds adaptive techniques to multiple importance sampling methods.
It is very popular in the field of simulation because it reduces the variance compared with the traditional method and improves the computational efficiency by allocating sampling resources dynamically.
Recently, it has been applied to the derivative-free trust-region optimization algorithm, see $\cite{9}$.
Besides, this method was also used in $\cite{14}$ for integrating geostatistical maps and infectious disease transmission models.

The approximation method we study here is the same as the model in $\cite{2}$, although the research objectives are different.
Therefore, we continue to use the term in $\cite{2}$ for $\eqref{eqn33}$ , namely, SAA with AMIS.
The discussion in this paper is based only on theoretical studies.
Readers interested in algorithms and more information on AMIS are referred to $\cite{18,15,16,17,9,19,14}$.

With the above brief introduction, we now clarify the main research content of this paper.
Consider a stochastic programming problem:
\begin{eqnarray}\label{eqn24}
\displaystyle\min_{x\in\mathcal{X}}\left\{f(x):={E[g\big(x,\theta\big)]}\triangleq\displaystyle\int_{\Theta}{F(x,\theta)}{\mathrm{d}{\theta}}\right\}.
\end{eqnarray}
Its corresponding problem for the SAA with AMIS is as follows:
\begin{eqnarray}\label{eqn25}
\displaystyle\min_{x\in\mathcal{X}}\left\{f_n(x):=\frac{1}{n}\displaystyle\sum_{i=1}^{n}\frac{F(x,\theta_{i})}{\psi_{i}(\theta_{i})}\right\}.
\end{eqnarray}
We investigate two important questions around $\eqref{eqn24}$ and $\eqref{eqn25}$.
One of them is the uniform exponential convergence of $f_n$ to $f$.
The other is the asymptotics with respect to $\eqref{eqn24}$ and $\eqref{eqn25}$, specifically, the asymptotics between the optimal values of $f_n$ and $f$.
For ease of presentation, let $\vartheta$ and $\vartheta_n$ denote the optimal values of $\eqref{eqn24}$ and $\eqref{eqn25}$, respectively.
In addition, let $\mathcal{T}$ and $\mathcal{T}_n$ denote the sets of optimal solutions of $\eqref{eqn24}$ and $\eqref{eqn25}$, respectively.

Actually, when using approximation methods, one is primarily concerned with the relationship between $\vartheta$ and $\vartheta_n$, as well as the relationship between $\mathcal{T}$ and $\mathcal{T}_n$.
Theorems 5.3-5.5 in $\cite{7}$ provide convergence conditions for the optimal value and the optimal solutions of the classical SAA in the independent and identically distributed (iid) case.
Feng et al. proved the convergence results of $\vartheta_n$ and $\mathcal{T}_n$ for SAA with AMIS using the similar approach as in $\cite[Theorem\ 5.3]{7}$, see $\cite{2}$.
The necessary condition therein is the uniform convergence of $f_n$ to $f$, which is established by applying Assumption S-LIP in $\cite{11}$.

Our first goal is to further investigate the uniform exponential convergence of $f_n$ to $f$, which provides the convergence rate.
Uniform exponential convergence has been attracting much attention in numerical methods, optimization and other fields.
There have been a lot of studies on uniform exponential convergence with SAA.
In $\cite{20}$, Dai et al. derived an exponential convergence result and noted the role of large deviation techniques in such problems.
Under the Lipschitz assumption, Homem-de-Mello in $\cite{21}$ achieved a uniform result for non-iid sampling.
Shapiro and Xu addressed the uniform exponential convergence for a class of real valued functions with random variables that satisfy global H\"{o}lder continuity, utilizing Cram\'{e}r's theorem, see $\cite{22}$.
Subsequently, the requirement of global H\"{o}lder continuity has been gradually relaxed, as detailed in Xu's study in $\cite{12}$ and Sun and Xu's discussion in $\cite{24}$.

It is not difficult to see that large deviation techniques have been the usual means of studying uniform exponential convergence of SAA in recent years.
Here, we also consider such techniques in solving similar problems for SAA with AMIS.
The distinction is that we apply a kind of concentration inequality, which has not been found to be used in previous research, instead of Cram\'{e}r's theorem and G\"{a}rtner-Ellis theorem.

On the other hand, in order to achieve the goal above, we also draw on the ideas of econometricians who study stochastic equicontinuity (SE) and generic uniform convergence.
Since generic uniform convergence is one of the important tools for the asymptotics of econometric estimators, they have an in-depth study of such problems.
Newey $\cite{25}$ showed that a necessary condition for generic uniform convergence is the SE condition on a compact set.
Andrew extended the compactness condition to total boundedness under the modified SE condition in $\cite{11}$.
P\"{o}tscher and Prucha $\cite{26}$ clarified the distinctions and relations about equicontinuity-type in different literatures.
They pointed out that the SE condition was introduced in order to use a stochastic version of Ascoli-Arzel\`{a} theorem, essentially a stochastic uniform equicontinuity-type condition, which should be called asymptotically $L_0$ uniformly equicontinuous.
Moreover, it is argued that the arbitrary uniform convergence result on a totally bounded set can be reduced to a uniform convergence result on a compact set in $\cite[Section\ 5]{26}$.
Therefore, we only discuss under the compact set $\mathcal{X}$.
In order to trace the origin, we still use the term SE in this paper.
Indeed, Assumption 3 used in $\cite{2}$ to obtain strong uniform convergence of SAA with AMIS is Assumption S-LIP in $\cite{11}$, which is a sufficient condition for strong SE.
Here, we establish the notion of  adaptive multiple modulus calmness, in order to guarantee the SE of random sequences.

The second goal is to study the asymptotics of the optimal values for SAA with AMIS.
We are not aware of any previous work on this issue.
Here, we are inspired by Shapiro's study on the asymptotics of classical SAA.
The core thought is to use the infinite-dimensional Delta theorem and the suitable functional CLT, see $\cite{27}$ and $\cite[Chapter\ 5]{7}$.
For SAA with AMIS, we show that the associated random sequence is a martingale difference sequence in Lemma $\ref{lem2}$.
Thus, the immediate difficulty we encounter is how to obtain a usable functional CLT for martingale difference sequences.
The breakthrough to solve this problem is that we notice a CLT for martingale difference arrays from $\cite{3}$.
Applying it, we derive the functional CLT for martingale difference sequences, namely Theorem $\ref{cor1}$.
And then,  this goal is achieved in Theorem $\ref{the7}$.

In general, the innovations of this paper are on the following aspects:\par
\textbf{(i)} We prove the uniform exponential convergence for SAA with AMIS by applying the technique of a concentration inequality for bounded martingale differences and the idea of SE.
While uniform exponential convergence has been studied quite deeply and extensively for classical SAA, as we introduced earlier, we are not aware of any previous such results for SAA with AMIS.
This result is shown in Theorem $\ref{the1}$. \par
\textbf{(ii)} We derive a functional CLT for martingale difference sequences (Theorem $\ref{cor1}$), which serves as one of the basic tools for  studying asymptotics in this paper.
This crucial step benefits from a pre-existing result for martingale difference arrays that has not yet been found to be applied in related fields.
This part is presented in \cref{sec4.1}.\par
\textbf{(iii)} We obtain the asymptotics of the optimal values for SAA with AMIS, by using the Delta theorem and the functional CLT for martingale difference sequences that we achieved.
This complements existing theoretical studies on SAA with AMIS.
The details are provided in \cref{sec4.2}, where Theorem $\ref{the7}$ is the primary result.

The rest of the paper is organized as follows.
In \cref{sec2}, we introduce preliminaries such as a concentration inequality for bounded martingale differences and a CLT for martingale difference arrays.
In \cref{sec3}, we investigate uniform exponential convergence of SAA with AMIS.
The results on the functional CLT for martingale difference sequences and asymptotics of the optimal value for SAA with AMIS are shown in \cref{sec4}.
The last section is the conclusion.

\section{Notation and Preliminaries}
\label{sec2}
\subsection{Basic Notation}
Throughout this paper, we adopt the following notation.
An abstract probability space is defined by $(\Omega,\mathscr{F},P)$. $E[\cdot]$ denotes the expectation with respect to the probability measure $P$.
We use $\|\cdot\|$ to denote the Euclidean norm of a vector.
$\stackrel{\mathcal{D}}{\longrightarrow}$ and $\stackrel{P}{\longrightarrow}$ denote convergence in distribution and convergence in probability, respectively.
A sequence $\{X_n\}$ of random variables converges in $L_p$ ($p>0$) to a random variable $X$, if
$
\displaystyle\lim_{n\rightarrow\infty}E\|X_n-X\|^p=0,
$
denoted $X_n\stackrel{\mathcal{L}_p}{\longrightarrow}X$.

$o_p(\cdot)$ stands for a probabilistic analogue of the usual order notation $o(\cdot)$.
That means that $a_n=o_p(b_n)$ if for any $\varepsilon>0$,
$
\displaystyle\lim_{n\rightarrow\infty}{\rm{Prob}}\left(\bigg|\frac{a_n}{b_n}\bigg|>\varepsilon\right)=0,
$
where $\{a_n\}$ and $\{b_n\}$ are sequences of random variables.

The directional derivative of $f: B_1\rightarrow{B_2}$ at $x\in B_1$ in the direction $\mathbf{d}\in B_1$ is defined by
$
f'(x,\mathbf{d}):=\displaystyle\lim_{t\rightarrow0}\frac{f(x+t\mathbf{d})-f(x)}{t},
$
where $B_1$ and $B_2$ are two Banach spaces.

A function $\varpi(x)$ is called a modulus of continuity if it is a strictly monotonic increasing continuous function on $\mathbb{R}_+$ with $\displaystyle\lim_{x\rightarrow{0^+}}\varpi(x)=0$ and
$\displaystyle\limsup_{x\rightarrow{0^+}}\frac{x}{\varpi(x)}<+\infty$.

$\mathbb{D}(A,B)$ denotes the deviation of the set $A$ from the set $B$, where $A, B\subset\mathbb{R}^n$. That is, $\mathbb{D}(A,B):=\displaystyle\sup_{x\in A} {\rm{dist}}(x,B)$, where ${\rm{dist}}(x,B):=\displaystyle\inf_{x'\in B}\|x-x'\|$ stands for the distance from a point $x\in\mathbb{R}^n$ to the set $B$.

\subsection{A Concentration Inequality for Bounded Martingale Differences}
The lemma given below plays a crucial role in proving the uniform exponential convergence of SAA with AMIS.
Actually, it can be thought of as an application of Bennett's Lemma to bounded martingale differences.

\begin{lemma}\label{lem1}
Let $\{\mathscr{Y}_i\}_{i=1}^{\infty}$ be the natural filtration of the real valued random variables $\{Y_i\}_{i=1}^{\infty}$,
and $E[Y_i\mid{\mathscr{Y}_{i-1}}]=0$. In particular, we denote $E[Y_1\mid{\mathscr{Y}_{0}}]\triangleq{E[Y_1]}$.
Suppose that there exist constants $b>0$ and $k>0$ such that $Y_i\leq{b}$ and $E[Y_i^2\mid{\mathscr{Y}_{i-1}}]\leq{k}$. Then, for any $\lambda\geq0$,
\begin{eqnarray}\label{eqn1}
E\left[e^{\lambda{V_n}}\right]\leq\left(\frac{b^{2}e^{-\frac{\lambda{k}}{b}}+ke^{\lambda{b}}}{b^{2}+k}\right)^n,
\end{eqnarray}
where $V_n\triangleq\displaystyle\sum^{n}_{i=1}Y_i$. Moreover, for any $0<\epsilon<b$,
\begin{eqnarray}\label{eqn2}
{\rm{Prob}}\left({\textstyle{\frac{1}{n}}}V_n\geq{\epsilon}\right)\leq{{e}^{-n\mathcal{H}\left(\frac{b\epsilon+k}{b^{2}+k}\big{|}\frac{k}{b^{2}+k}\right)}},
\end{eqnarray}
where $\mathcal{H}(p|p')\triangleq{p\ln\big(\frac{p}{p'}\big)+(1-p)\ln\big(\frac{1-p}{1-p'}\big)}$ for $p, p'\in(0,1)$.
\end{lemma}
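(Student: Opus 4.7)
The plan is to first establish the exponential moment bound \eqref{eqn1} by a one-step conditioning argument iterated over $i$, and then derive the tail bound \eqref{eqn2} by Markov's inequality followed by an optimization over $\lambda$ that can be interpreted as a Legendre transform.

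For \eqref{eqn1}, I proceed by induction on $n$. Using the tower property and the $\mathscr{Y}_{n-1}$-measurability of $e^{\lambda V_{n-1}}$,
\begin{equation*}
E\bigl[e^{\lambda V_n}\bigr] = E\bigl[e^{\lambda V_{n-1}} \cdot E[e^{\lambda Y_n} \mid \mathscr{Y}_{n-1}]\bigr],
\end{equation*}
so the whole task reduces to the one-step conditional MGF bound
\begin{equation*}
E[e^{\lambda Y_i} \mid \mathscr{Y}_{i-1}] \le \frac{b^{2} e^{-\lambda k/b} + k e^{\lambda b}}{b^{2}+k}, \qquad \lambda \ge 0,
\end{equation*}
for any random variable $Y_i$ satisfying $Y_i \le b$, $E[Y_i \mid \mathscr{Y}_{i-1}] = 0$, and $E[Y_i^{2} \mid \mathscr{Y}_{i-1}] \le k$.

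To prove this one-step bound---which is where the main work sits---I would introduce the quadratic $q(y) = \alpha + \beta y + \gamma y^{2}$ uniquely determined by the three conditions $q(-k/b) = e^{-\lambda k/b}$, $q'(-k/b) = \lambda e^{-\lambda k/b}$, and $q(b) = e^{\lambda b}$, i.e.\ $q$ is tangent to $e^{\lambda\cdot}$ at $y = -k/b$ and interpolates it at $y = b$. A short calculation shows $\gamma > 0$, via the elementary inequality $e^{t} \ge 1+t$ applied at $t = \lambda(b+k/b)$. The heart of the proof is the claim that $q(y) \ge e^{\lambda y}$ for all $y \le b$, and this is the main obstacle. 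I would establish it by studying $r(y) := q(y) - e^{\lambda y}$: since $r''(y) = 2\gamma - \lambda^{2} e^{\lambda y}$ is strictly decreasing, $r''$ has at most one zero, whence by Rolle $r'$ has at most two zeros and $r$ has at most three zeros counted with multiplicity. The construction already supplies a double zero at $-k/b$ and a simple zero at $b$, exhausting these three. Combined with $r(y) \to +\infty$ as $y \to -\infty$ (because $\gamma > 0$), this forces $r \ge 0$ throughout $(-\infty, b]$. Taking conditional expectation then gives
\begin{equation*}
E[e^{\lambda Y_i} \mid \mathscr{Y}_{i-1}] \le E[q(Y_i) \mid \mathscr{Y}_{i-1}] = \alpha + \gamma \, E[Y_i^{2} \mid \mathscr{Y}_{i-1}] \le \alpha + \gamma k,
\end{equation*}
and one verifies directly that $\alpha + \gamma k$ coincides with the MGF at $\lambda$ of the extremal two-point random variable placing mass $b^{2}/(b^{2}+k)$ on $-k/b$ and $k/(b^{2}+k)$ on $b$ (which has mean $0$ and variance $k$), namely $(b^{2} e^{-\lambda k/b} + k e^{\lambda b})/(b^{2}+k)$.

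For \eqref{eqn2}, I apply Markov's inequality: for any $\lambda \ge 0$,
\begin{equation*}
{\rm{Prob}}\bigl(\tfrac{1}{n}V_n \ge \epsilon\bigr) \le e^{-n\lambda \epsilon} \, E\bigl[e^{\lambda V_n}\bigr] \le \exp\bigl\{-n[\lambda \epsilon - M(\lambda)]\bigr\},
\end{equation*}
where $M(\lambda) := \ln\bigl((b^{2} e^{-\lambda k/b} + k e^{\lambda b})/(b^{2}+k)\bigr)$. One recognizes $M$ as the cumulant generating function of the two-point variable above, which is the affine image $Y_{*} = [(b^{2}+k)/b]\, Z - k/b$ of a Bernoulli variable $Z$ with parameter $p' = k/(b^{2}+k)$. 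Optimizing the exponent over $\lambda \ge 0$ therefore amounts to computing the Legendre transform of the Bernoulli log-MGF, which is the binary relative entropy $\mathcal{H}(\cdot\mid p')$; by the affine change-of-variable rule it is evaluated at $p = (\epsilon + k/b)\cdot b/(b^{2}+k) = (b\epsilon + k)/(b^{2}+k)$, which yields exactly $\mathcal{H}(p\mid p')$ as in \eqref{eqn2}. The hypothesis $0 < \epsilon < b$ ensures $p \in (p',1)$, so the supremum is attained at a finite positive $\lambda^{*}$ and the bound is nontrivial.
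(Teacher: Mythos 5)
Your proposal is correct and follows essentially the same route as the paper: a one-step conditional moment-generating-function bound (Bennett's Lemma) iterated via the tower property to get \eqref{eqn1}, then a Chernoff bound optimized over $\lambda$ to get \eqref{eqn2}, where your Legendre-transform computation of $\sup_{\lambda\ge 0}[\lambda\epsilon - M(\lambda)]$ lands on exactly the same optimal $\lambda=\frac{b}{b^{2}+k}\ln\bigl(\frac{b(b\epsilon+k)}{k(b-\epsilon)}\bigr)$ that the paper substitutes explicitly. The only substantive difference is that you prove the one-step Bennett bound from scratch via the tangent-quadratic majorant argument (correctly, including the sign of $\gamma$ and the zero-counting step), whereas the paper simply cites it as Lemma 2.4.1 of its reference.
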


$\mathcal{H}$ can be viewed as the relative entropy.
This lemma is close to Corollary 2.4.7 in $\cite{1}$, but is slightly different. For completeness and clarity, the proof is provided here.
\begin{proof}
Since $E[Y_i\mid{\mathscr{Y}_{i-1}}]=0$, $Y_i\leq{b}$ and $E[Y_i^2\mid{\mathscr{Y}_{i-1}}]\leq{k}$ for $i=1,2,...$, applying Bennett's Lemma (see
$\cite[Lemma\ 2.4.1]{1}$), we have
\begin{eqnarray}\label{eqn3}
E\left[e^{\lambda{Y_i}}\mid{\mathscr{Y}_{i-1}}\right]\leq\frac{b^{2}e^{-\frac{\lambda{k}}{b}}+ke^{\lambda{b}}}{b^{2}+k}.
\end{eqnarray}
In particular, when $i=n=1$, we directly get $\eqref{eqn1}$. In other cases, it is obtained by iterating the law of total expectation.
To be specific, combining with $\eqref{eqn3}$, we get
\begin{eqnarray}\label{eqn4}
E\left[e^{\lambda{V_n}}\right]=E\left[e^{\lambda{V_{n-1}}}E\left[e^{\lambda{Y_n}}\mid{\mathscr{Y}_{n-1}}\right]\right]\leq{E\left[e^{\lambda{V_{n-1}}}\right]}\frac{b^{2}e^{-\frac{\lambda{k}}{b}}+ke^{\lambda{b}}}{b^{2}+k}.
\end{eqnarray}
By iteratively applying $\eqref{eqn4}$ until the subscript of ${V_n}$ reaches 1, we obtain the result expressed in $\eqref{eqn1}$.\par
According to Chebycheff's inequality, we have that for any $\epsilon>0$ and $\lambda>0$,
\begin{eqnarray}
{\rm{Prob}}\left({\textstyle{\frac{1}{n}}}V_n\geq{\epsilon}\right)\leq{e^{-\lambda{n\epsilon}}}E\left[e^{\lambda{V_n}}\right].
\nonumber
\end{eqnarray}
Hence, combining with $\eqref{eqn1}$, we obtain
\begin{eqnarray}\label{eqn5}
{\rm{Prob}}\left({\textstyle{\frac{1}{n}}}V_n\geq{\epsilon}\right)\leq{e^{-\lambda{n\epsilon}}}\left(\frac{b^{2}e^{-\frac{\lambda{k}}{b}}+ke^{\lambda{b}}}{b^{2}+k}\right)^n.
\end{eqnarray}
Let $0<\epsilon<b$ and $\lambda=\frac{b}{b^{2}+k}\ln\left(\frac{b(b\epsilon+k)}{k(b-\epsilon)}\right)$.
It's not hard to verify that $\lambda>0$ in this case.
Substituting them into the right-hand side of inequality $\eqref{eqn5}$, we have
\begin{eqnarray}\label{eqn6}
{\rm{Prob}}\left({\textstyle{\frac{1}{n}}}V_n\geq{\epsilon}\right)\leq\left(\bigg(\frac{k}{b\epsilon+k}\bigg)^{\frac{b\epsilon+k}{b^{2}+k}}\bigg(\frac{b}{b-\epsilon}\bigg)^{\frac{b^{2}-b\epsilon}{b^{2}+k}}\right)^n.
\end{eqnarray}
On the other hand, according to the definition of $\mathcal{H}$,
\begin{eqnarray}
{\mathcal{H}}\left(\frac{b\epsilon+k}{b^{2}+k}\,\middle\vert\, \frac{k}{b^{2}+k}\right)={\frac{b\epsilon+k}{b^{2}+k}}\ln\bigg(\frac{b\epsilon+k}{k}\bigg)+{\frac{b^{2}-b\epsilon}{b^{2}+k}}\ln\bigg(\frac{b-\epsilon}{b}\bigg).
\nonumber
\end{eqnarray}
Therefore,
\begin{eqnarray}\label{eqn7}
{{e}^{-n\mathcal{H}\left(\frac{b\epsilon+k}{b^{2}+k}\big{|}\frac{k}{b^{2}+k}\right)}}=\left(\bigg(\frac{b\epsilon+k}{k}\bigg)^{\frac{b\epsilon+k}{b^{2}+k}}\bigg(\frac{b-\epsilon}{b}\bigg)^{\frac{b^{2}-b\epsilon}{b^{2}+k}}\right)^{-n}.
\end{eqnarray}
Thus, combining $\eqref{eqn6}$ and $\eqref{eqn7}$, we have $\eqref{eqn2}$.
\end{proof}

Here, we emphasize that $\mathcal{H}\left(\frac{b\epsilon+k}{b^{2}+k}\big{|}\frac{k}{b^{2}+k}\right)>0$ in $\eqref{eqn2}$ always holds. A detailed analysis is stated in Remark $\ref{rem1}$.

\begin{remark}\label{rem1}
$\mathcal{H}(p|p')>0$ for $0<p'<p<1$. This is easy to verify. By the properties of the logarithm, we have
\begin{eqnarray}
-\mathcal{H}(p|p')&=&p\ln\left(\frac{p'}{p}\right)-(1-p)\ln\left(\frac{1-p}{1-p'}\right)
\nonumber \\
&<&p\left(\frac{p'}{p}-1\right)+(1-p)\left(\frac{1-p'}{1-p}-1\right)
\nonumber \\
&=&0.
\nonumber
\end{eqnarray}
Obviously, $\frac{b\epsilon+k}{b^{2}+k} > \frac{k}{b^{2}+k}$ in Lemma $\ref{lem1}$.
Hence, $\mathcal{H}\left(\frac{b\epsilon+k}{b^{2}+k}\big{|}\frac{k}{b^{2}+k}\right)>0$ in $\eqref{eqn2}$.
\end{remark}

\subsection{Several Important Theorems in Banach spaces}
The results introduced in this subsection are essential for the proof of the main theorem in \cref{sec4}.
In order to state the following conclusions, first we have to make a definition clear, which is the Hadamard directional derivative.

\begin{definition}
Let $B_1$ and $B_2$ be two Banach spaces. A mapping $\mathfrak{F}: B_1\rightarrow{B_2}$ is said to be Hadamard directionally differentiable at $x\in B_1$ in
the direction $\mathbf{d}\in B_1$ if the following limit exists:
\begin{eqnarray}
\mathfrak{F}'_{\mathscr{H}}(x,\mathbf{d}):=\displaystyle\lim_{t\rightarrow0,
~\mathbf{d}'\rightarrow\mathbf{d}}\frac{\mathfrak{F}(x+t\mathbf{d}')-\mathfrak{F}(x)}{t}.
\nonumber
\end{eqnarray}
\end{definition}

The following proposition provides a sufficient condition for the equivalence relation between the directional derivative and the Hadamard directional derivative.
\begin{proposition}\label{pro3}
Let $B_1$ and $B_2$ be two Banach spaces. Consider a mapping $\mathfrak{F}:B_1\rightarrow{B_2}$. Suppose that $\nu\in{B_1}$.
If $\mathfrak{F}(\cdot)$ is directionally differentiable at $\nu$ and Lipschitz continuous in a neighborhood of $\nu$, then $\mathfrak{F}(\cdot)$ is Hadamard directionally differentiable at $\nu$, and $\mathfrak{F}'_{\mathscr{H}}(\nu,\mathbf{d})=\mathfrak{F}'(\nu,\mathbf{d})$, where $\mathbf{d}\in B_1$ is a direction.
\end{proposition}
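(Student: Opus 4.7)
The plan is a direct triangle-inequality argument combining the two hypotheses. Fix a direction $\mathbf{d}\in B_1$ and let $t\to 0$, $\mathbf{d}'\to \mathbf{d}$. I would insert $\mathfrak{F}(\nu+t\mathbf{d})$ as a pivot and estimate
\begin{eqnarray*}
\left\|\frac{\mathfrak{F}(\nu+t\mathbf{d}')-\mathfrak{F}(\nu)}{t}-\mathfrak{F}'(\nu,\mathbf{d})\right\|
&\leq& \left\|\frac{\mathfrak{F}(\nu+t\mathbf{d}')-\mathfrak{F}(\nu+t\mathbf{d})}{t}\right\| \\
&& {}+\left\|\frac{\mathfrak{F}(\nu+t\mathbf{d})-\mathfrak{F}(\nu)}{t}-\mathfrak{F}'(\nu,\mathbf{d})\right\|.
\end{eqnarray*}
The second summand tends to $0$ by the hypothesis of directional differentiability at $\nu$ in the direction $\mathbf{d}$.

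For the first summand, I would invoke the local Lipschitz property. Let $L>0$ be a Lipschitz constant of $\mathfrak{F}$ on some open ball $U$ centered at $\nu$. Since $\mathbf{d}'\to \mathbf{d}$, the net $\{\mathbf{d}'\}$ is eventually bounded, say by some $R>0$, and hence for $t$ sufficiently small both points $\nu+t\mathbf{d}'$ and $\nu+t\mathbf{d}$ lie in $U$. The Lipschitz bound then gives
\begin{eqnarray*}
\left\|\frac{\mathfrak{F}(\nu+t\mathbf{d}')-\mathfrak{F}(\nu+t\mathbf{d})}{t}\right\|\leq \frac{L\|t\mathbf{d}'-t\mathbf{d}\|}{t}=L\|\mathbf{d}'-\mathbf{d}\|,
\end{eqnarray*}
which tends to $0$ as $\mathbf{d}'\to \mathbf{d}$. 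Combining the two estimates yields the existence of the Hadamard directional limit and the identity $\mathfrak{F}'_{\mathscr{H}}(\nu,\mathbf{d})=\mathfrak{F}'(\nu,\mathbf{d})$.

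There is no real obstacle in this argument; the only point requiring minor care is the joint limit $t\to 0$, $\mathbf{d}'\to \mathbf{d}$, which must be handled simultaneously rather than iteratively. This is covered by observing that $\|\mathbf{d}'\|$ stays bounded once $\mathbf{d}'$ is close enough to $\mathbf{d}$, so that $\nu+t\mathbf{d}'$ genuinely enters the Lipschitz neighborhood $U$ for all sufficiently small $t$ uniformly in such $\mathbf{d}'$. With that observation, the estimates above can be made $\varepsilon$-small in a genuine $(t,\mathbf{d}')$-neighborhood of $(0,\mathbf{d})$, which is precisely what the definition of Hadamard directional differentiability requires.
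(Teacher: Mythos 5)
Your argument is correct and is the standard proof of this fact; the paper itself gives no proof, deferring instead to the cited references, and your pivot-plus-Lipschitz decomposition is exactly the argument found there. The only cosmetic point is that since the paper's definition takes $t\rightarrow 0$ rather than $t\downarrow 0$, the Lipschitz estimate should read $\frac{L\|t\mathbf{d}'-t\mathbf{d}\|}{|t|}=L\|\mathbf{d}'-\mathbf{d}\|$, which changes nothing in the conclusion. Your closing remark about handling the joint limit in $(t,\mathbf{d}')$ uniformly is the right point of care and is handled adequately.
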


The definition and proposition above can be referred to $\cite{5,4}$. More concepts and properties about various directional derivatives can be obtained from these references and $\cite[page\ 30]{10}$.\par

The Delta method is a frequently used tool in the asymptotic analysis of stochastic problems. The Delta theorem used here is shown in $\cite[Theorem\ 7.67]{7}$.
\begin{theorem}\label{the3}(Delta Theorem)\par
Let $B_1$ and $B_2$ be two Banach spaces, equipped with their Borel $\sigma$-algebras, where $B_1$ is separable.
Suppose that a mapping $Q: B_1\rightarrow{B_2}$ is Hadamard directionally differentiable at $l\in{B_1}$.
Assume that, as $n\rightarrow\infty$, a sequence of positive numbers $\{\varsigma_n\}$ and a random sequence $\{\mathscr{Z}_n\}$ of $B_1$ satisfy $\varsigma_n\rightarrow\infty$ and $\varsigma_n(\mathscr{Z}_n-l)\stackrel{\mathcal{D}}{\longrightarrow}\mathscr{Z}$, respectively, where $\mathscr{Z}\in{B_1}$.
Then
\begin{eqnarray}
\varsigma_n[Q(\mathscr{Z}_n)-Q(l)]\stackrel{\mathcal{D}}{\longrightarrow}Q'_{\mathscr{H}}(\mathscr{Z},l),~~~~~~~~~~~~~~~~~~~~~~~~
\nonumber  \\
{\rm{and}}~~~~~~~~~~~~~~~~\varsigma_n[Q(\mathscr{Z}_n)-Q(l)]=Q'_{\mathscr{H}}\big(\varsigma_n(\mathscr{Z}_n-l),l\big)+o_p(1).~~~~~~~~~~~~~~~
\nonumber
\end{eqnarray}
\end{theorem}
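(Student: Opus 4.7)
The plan is to reduce the convergence in distribution to an almost sure statement via the Skorokhod/Dudley representation theorem, which is available because $B_1$ is separable, and then to read off both conclusions from the pointwise definition of Hadamard directional differentiability.

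Concretely, since $\varsigma_n(\mathscr{Z}_n-l)\stackrel{\mathcal{D}}{\longrightarrow}\mathscr{Z}$ in the separable Banach space $B_1$, I would invoke Skorokhod/Dudley to produce, on a common probability space, $B_1$-valued random elements $\widetilde{Y}_n$ and $\widetilde{\mathscr{Z}}$ with $\widetilde{Y}_n\stackrel{\mathcal{D}}{=}\varsigma_n(\mathscr{Z}_n-l)$, $\widetilde{\mathscr{Z}}\stackrel{\mathcal{D}}{=}\mathscr{Z}$, and $\widetilde{Y}_n\to\widetilde{\mathscr{Z}}$ almost surely. Set $\widetilde{\mathscr{Z}}_n:=l+\varsigma_n^{-1}\widetilde{Y}_n$, so that $\widetilde{\mathscr{Z}}_n\stackrel{\mathcal{D}}{=}\mathscr{Z}_n$ and $\varsigma_n(\widetilde{\mathscr{Z}}_n-l)=\widetilde{Y}_n$. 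On the a.s. set, the pair $(t_n,\mathbf{d}_n):=(\varsigma_n^{-1},\widetilde{Y}_n)$ satisfies $t_n\to 0^+$ and $\mathbf{d}_n\to\widetilde{\mathscr{Z}}$, hence the definition of Hadamard directional differentiability of $Q$ at $l$ gives
\begin{equation*}
\varsigma_n\bigl[Q(\widetilde{\mathscr{Z}}_n)-Q(l)\bigr]=\frac{Q(l+t_n\mathbf{d}_n)-Q(l)}{t_n}\longrightarrow Q'_{\mathscr{H}}(l,\widetilde{\mathscr{Z}})\quad\text{a.s.}
\end{equation*}
Since almost sure convergence implies convergence in distribution and the Skorokhod copies match the original joint laws, the first conclusion follows.

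For the expansion I will use the standard fact that Hadamard directional differentiability forces $Q'_{\mathscr{H}}(l,\cdot)$ to be continuous on $B_1$; a short diagonal argument suffices (pick $t_n\downarrow 0$ so fast that, for $\mathbf{d}_n\to\mathbf{d}$, the incremental quotient along $\mathbf{d}_n$ is within $1/n$ of $Q'_{\mathscr{H}}(l,\mathbf{d}_n)$, then apply the Hadamard definition with the joint convergence $(t_n,\mathbf{d}_n)\to(0,\mathbf{d})$). Applying this continuity along the Skorokhod coupling, $Q'_{\mathscr{H}}(l,\widetilde{Y}_n)\to Q'_{\mathscr{H}}(l,\widetilde{\mathscr{Z}})$ a.s., and subtracting from the previous display yields
\begin{equation*}
\varsigma_n\bigl[Q(\widetilde{\mathscr{Z}}_n)-Q(l)\bigr]-Q'_{\mathscr{H}}\bigl(l,\varsigma_n(\widetilde{\mathscr{Z}}_n-l)\bigr)\longrightarrow 0\quad\text{a.s.}
\end{equation*}
Transferring back to the original probability space, this a.s. convergence becomes the $o_p(1)$ statement that is the second conclusion.

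The main obstacle I anticipate is not the algebra but the book-keeping around measurability and the proper scope of Skorokhod/Dudley. One has to verify that $Q'_{\mathscr{H}}(l,\mathscr{Z})$ is a well-defined Borel measurable $B_2$-valued random element---which follows at once from continuity of $Q'_{\mathscr{H}}(l,\cdot)$ together with Borel measurability of $\mathscr{Z}$ into $B_1$---and to check that separability of $B_1$ is precisely what licenses the Skorokhod representation at this level of generality. Once those measurability points are clean, no uniform estimate on $Q$ is needed, because Hadamard directional differentiability already encodes the joint $(t,\mathbf{d})$-uniformity driving the argument.
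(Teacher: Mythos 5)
Your proposal is correct, and it is essentially the canonical proof of this result: the paper itself gives no proof but cites \cite[Theorem 7.67]{7}, where the argument is exactly your Skorokhod--Dudley almost-sure representation (licensed by separability of $B_1$) followed by the pointwise Hadamard quotient limit and the continuity of $Q'_{\mathscr{H}}(l,\cdot)$ in the direction to obtain the $o_p(1)$ expansion. The only cosmetic point is the order of arguments in $Q'_{\mathscr{H}}(\cdot,\cdot)$, where the paper's theorem statement writes the direction first, opposite to its own definition; your usage matches the definition.
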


Danskin's theorem provides sufficient conditions for directional derivability of max-functions, see $\cite[Theorem\ 7.25]{7}$. For the convenience of subsequent use, we directly present the corresponding conclusion for min-functions in Corollary $\ref{cor2}$.
\begin{theorem}\label{the2}(Danskin's Theorem)\par
Let $\Phi:{\mathbb{R}^n}\times\Lambda\rightarrow\mathbb{R}$, where $\Lambda$ is a nonempty and compact topological space.
Suppose that $\Phi(\cdot,\lambda)$ is differentiable for each $\lambda\in\Lambda$ and $\nabla_x\Phi(x,\lambda)$ is continuous
on ${\mathbb{R}^n}\times\Lambda$. Then $\phi(x):=\displaystyle\sup_{\lambda\in\Lambda}\Phi(x,\lambda)$ is locally Lipschitz continuous and directionally differentiable. Furthermore, for a given direction $\mathbf{d}\in{\mathbb{R}^n}$, the directional derivative of $\phi(x)$ is
\begin{eqnarray}
\phi'(x,\mathbf{d})=\displaystyle\sup_{\lambda\in\bar{\Lambda}(x)}\mathbf{d}^\top\nabla_x\Phi(x,\lambda),
\nonumber
\end{eqnarray}
where $\bar{\Lambda}(x):=\arg\displaystyle\max_{\lambda\in\Lambda}\Phi(x,\lambda)$.
\end{theorem}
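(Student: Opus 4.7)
The plan is to verify the three assertions of the theorem in sequence—local Lipschitz continuity of $\phi$, existence of the directional derivative, and the explicit formula—leveraging compactness of $\Lambda$ together with joint continuity of $\nabla_x\Phi$ on $\mathbb{R}^n\times\Lambda$. For local Lipschitz continuity, I would fix $x_0\in\mathbb{R}^n$ and a compact neighborhood $\bar{B}(x_0,r)$. Since $\nabla_x\Phi$ is continuous on the compact set $\bar{B}(x_0,r)\times\Lambda$, it is uniformly bounded there by some $M>0$. The classical mean value theorem applied to $\Phi(\cdot,\lambda)$ gives $|\Phi(x_1,\lambda)-\Phi(x_2,\lambda)|\le M\|x_1-x_2\|$ uniformly in $\lambda\in\Lambda$; taking the supremum over $\lambda$ transfers this bound to $\phi$. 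The same joint continuity shows that $\Phi(x,\cdot)$ is continuous on the compact space $\Lambda$, so $\bar{\Lambda}(x)$ is nonempty and compact for every $x$.

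For the directional derivative, I aim to sandwich the quotient $[\phi(x+t\mathbf{d})-\phi(x)]/t$ by matching bounds as $t\to 0^+$. The lower bound is the easy direction: for any fixed $\lambda\in\bar{\Lambda}(x)$ one has $\phi(x+t\mathbf{d})\ge \Phi(x+t\mathbf{d},\lambda)$ and $\phi(x)=\Phi(x,\lambda)$, so the quotient is bounded below by $[\Phi(x+t\mathbf{d},\lambda)-\Phi(x,\lambda)]/t$, which converges to $\mathbf{d}^\top\nabla_x\Phi(x,\lambda)$. Taking the supremum over $\bar{\Lambda}(x)$ yields $\liminf_{t\to 0^+}[\phi(x+t\mathbf{d})-\phi(x)]/t\ge \sup_{\lambda\in\bar{\Lambda}(x)}\mathbf{d}^\top\nabla_x\Phi(x,\lambda)$.

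The main obstacle is the matching upper bound, which hinges on a selection/compactness argument for the maximizing parameter. For each small $t>0$, I would select $\lambda_t\in\arg\max_{\lambda\in\Lambda}\Phi(x+t\mathbf{d},\lambda)$; then $\phi(x+t\mathbf{d})=\Phi(x+t\mathbf{d},\lambda_t)$ while $\phi(x)\ge \Phi(x,\lambda_t)$, so by the mean value theorem $[\phi(x+t\mathbf{d})-\phi(x)]/t\le \mathbf{d}^\top\nabla_x\Phi(x+\tau_t\mathbf{d},\lambda_t)$ for some $\tau_t\in(0,t)$. Along any positive sequence $t_k\to 0^+$, compactness of $\Lambda$ yields a subsequence (still indexed by $k$) with $\lambda_{t_k}\to\lambda^*$. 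Continuity of $\phi$ together with $\phi(x+t_k\mathbf{d})=\Phi(x+t_k\mathbf{d},\lambda_{t_k})\to \Phi(x,\lambda^*)$ forces $\phi(x)=\Phi(x,\lambda^*)$, hence $\lambda^*\in\bar{\Lambda}(x)$. Joint continuity of $\nabla_x\Phi$ then gives $\mathbf{d}^\top\nabla_x\Phi(x+\tau_{t_k}\mathbf{d},\lambda_{t_k})\to \mathbf{d}^\top\nabla_x\Phi(x,\lambda^*)\le \sup_{\lambda\in\bar{\Lambda}(x)}\mathbf{d}^\top\nabla_x\Phi(x,\lambda)$. Since this holds along every subsequence of every sequence $t_k\to 0^+$, the $\limsup$ is controlled and matches the lower bound, so the directional derivative exists and equals the claimed expression, completing the proof.
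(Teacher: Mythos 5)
The paper does not prove this statement at all: Theorem~2.6 is quoted verbatim from the literature (Theorem~7.25 of reference [7]) and used as a black box, so there is no in-paper proof to compare against. Your argument is the standard textbook proof of Danskin's theorem and its overall structure is sound: the Lipschitz bound via $|\sup_\lambda a(\lambda)-\sup_\lambda b(\lambda)|\le\sup_\lambda|a(\lambda)-b(\lambda)|$ and boundedness of $\nabla_x\Phi$ on a compact product, the easy $\liminf$ bound from a fixed maximizer at $x$, and the $\limsup$ bound via maximizers $\lambda_t$ at $x+t\mathbf{d}$, the mean value theorem, and passage to a cluster point $\lambda^*\in\bar{\Lambda}(x)$.

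Two technical points deserve attention. First, your claim that ``the same joint continuity shows that $\Phi(x,\cdot)$ is continuous on $\Lambda$'' does not follow: continuity of $\nabla_x\Phi$ says nothing about the $\lambda$-dependence of $\Phi$ itself (take $\Phi(x,\lambda)=c(\lambda)$ with $c$ discontinuous; the gradient is identically zero). Continuity of $\Phi(x_0,\cdot)$ at a single $x_0$ must be assumed (it then propagates to joint continuity via $\Phi(x,\lambda)=\Phi(x_0,\lambda)+\int_0^1(x-x_0)^\top\nabla_x\Phi(x_0+t(x-x_0),\lambda)\,\mathrm{d}t$); without it, $\phi$ may be infinite and $\bar{\Lambda}(x)$ empty. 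This is really a gap in the theorem statement as transcribed, but your proof uses the continuity in $\lambda$ twice (nonemptiness of $\bar{\Lambda}(x)$, and the limit $\Phi(x,\lambda_{t_k})\to\Phi(x,\lambda^*)$), so you should state it as a hypothesis rather than derive it. Second, $\Lambda$ is only a compact topological space, not assumed metrizable, so a sequence $\lambda_{t_k}$ need not have a convergent subsequence; the extraction should be phrased with subnets (or one should note that in the paper's application $\Lambda=\mathcal{X}\subset\mathbb{R}^n$ is metric, where your sequential argument is fine). Finally, note that the paper's directional derivative is written as a two-sided limit $t\to 0$, whereas Danskin's formula is inherently one-sided ($t\downarrow 0$); your use of $t\to 0^+$ is the correct reading.
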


\begin{corollary}\label{cor2}
Let $\Phi(x,\lambda)$ and $\phi(x)$ be defined as in Theorem $\ref{the2}$. Suppose that $\Psi(x,\lambda)=-\Phi(x,\lambda)$ and $\psi(x)=-\phi(x)$.
Then $\psi(x)=\displaystyle\inf_{\lambda\in\Lambda}\Psi(x,\lambda)$ is locally Lipschitz continuous and directionally differentiable. Furthermore, for a given direction $\mathbf{d}\in{\mathbb{R}^n}$, the directional derivative of $\psi(x)$ is
\begin{eqnarray}
\psi'(x,\mathbf{d})=\displaystyle\inf_{\lambda\in\tilde{\Lambda}(x)}\mathbf{d}^\top\nabla_x\Psi(x,\lambda),
\nonumber
\end{eqnarray}
where $\tilde{\Lambda}(x):=\arg\displaystyle\min_{\lambda\in\Lambda}\Psi(x,\lambda)$.
In particular, if $\tilde{\Lambda}(x_0)=\{\lambda_0\}$ is a singleton for some $x_0\in{\mathbb{R}^n}$, then $\psi(x)$ is differentiable at $x_0$ and $\nabla\psi(x_0)=\nabla_x\Psi(x_0,\lambda_0)$.
\end{corollary}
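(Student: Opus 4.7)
The plan is to reduce everything to Danskin's theorem (Theorem \ref{the2}) via the substitutions $\Psi = -\Phi$ and $\psi = -\phi$, and then track the sign change through the statement. The essential observation is the identity
\begin{equation*}
\psi(x) = -\phi(x) = -\sup_{\lambda \in \Lambda}\Phi(x,\lambda) = \inf_{\lambda\in\Lambda}\bigl(-\Phi(x,\lambda)\bigr) = \inf_{\lambda \in \Lambda}\Psi(x,\lambda),
\end{equation*}
which confirms that $\psi$ agrees with the infimum we wish to analyze and lets us transfer all regularity properties of $\phi$ to $\psi$.

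First I would verify that the hypotheses of Theorem \ref{the2} are preserved: since $\Psi(\cdot,\lambda) = -\Phi(\cdot,\lambda)$ is differentiable in $x$ for each $\lambda$, and $\nabla_x\Psi(x,\lambda) = -\nabla_x\Phi(x,\lambda)$ is continuous on $\mathbb{R}^n \times \Lambda$, Danskin's theorem applies to $\phi$ and tells us that $\phi$ is locally Lipschitz and directionally differentiable. Local Lipschitz continuity and directional differentiability are preserved under negation, so $\psi = -\phi$ inherits both properties immediately, with $\psi'(x,\mathbf{d}) = -\phi'(x,\mathbf{d})$ for every direction $\mathbf{d}\in\mathbb{R}^n$.

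Next I would identify the active index set. Since multiplying by $-1$ reverses maximizers and minimizers,
\begin{equation*}
\bar{\Lambda}(x) = \arg\max_{\lambda\in\Lambda}\Phi(x,\lambda) = \arg\max_{\lambda\in\Lambda}\bigl(-\Psi(x,\lambda)\bigr) = \arg\min_{\lambda\in\Lambda}\Psi(x,\lambda) = \tilde{\Lambda}(x).
\end{equation*}
Combining this with Danskin's formula and pulling the minus sign through the supremum gives
\begin{equation*}
\psi'(x,\mathbf{d}) = -\sup_{\lambda\in\bar{\Lambda}(x)}\mathbf{d}^\top\nabla_x\Phi(x,\lambda) = \inf_{\lambda\in\tilde{\Lambda}(x)}\mathbf{d}^\top\bigl(-\nabla_x\Phi(x,\lambda)\bigr) = \inf_{\lambda\in\tilde{\Lambda}(x)}\mathbf{d}^\top\nabla_x\Psi(x,\lambda),
\end{equation*}
which is the claimed formula.

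For the singleton case, if $\tilde{\Lambda}(x_0) = \{\lambda_0\}$, then both the infimum formula above and the analogous supremum formula in Theorem \ref{the2} collapse to a single linear functional in $\mathbf{d}$, namely $\psi'(x_0,\mathbf{d}) = \mathbf{d}^\top \nabla_x \Psi(x_0,\lambda_0)$. Since a locally Lipschitz function whose directional derivative is linear in the direction is Gateaux (hence, in finite dimensions, Fréchet) differentiable, we conclude $\nabla \psi(x_0) = \nabla_x \Psi(x_0,\lambda_0)$. There is no real obstacle in this argument; the only point that requires a moment of care is the argmax-to-argmin identification and the systematic placement of the minus signs, which I would write out once and for all at the start to avoid bookkeeping errors.
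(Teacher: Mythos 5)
Your proof is correct and is exactly the argument the paper intends: the paper states Corollary \ref{cor2} without proof, as an immediate sign-flip reduction to Danskin's theorem, and your systematic tracking of the negation through $\sup \mapsto \inf$, $\arg\max \mapsto \arg\min$, and the directional derivative is precisely that reduction. The only step not literally contained in Theorem \ref{the2} as stated is the singleton case, and your justification (local Lipschitz continuity plus a directional derivative linear in $\mathbf{d}$ yields differentiability in finite dimensions) is the standard and correct way to supply it.
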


\subsection{A Central Limit Theorem for Martingale Difference Arrays}
\label{sec2.4}
The crucial step for the proof of the asymptotics is to establish a suitable functional CLT for martingale difference sequences.
In other words, we need a CLT in continuous functional spaces, which works for martingale difference sequences. To this end, we introduce a CLT for martingale difference arrays first, see $\cite[Theorem\ 3.2]{3}$.
It is actually seen as an application of Theorem 5 in $\cite{8}$.
In \cref{sec4}, we transform it into the form we need.

Let $(\mathcal{S}, d)$ be a compact metric space and $C(\mathcal{S})$ denote the space of continuous functions on $\mathcal{S}$, equipped with the sup-norm.
Let $\rho$ be a continuous pseudo-distance (also called continuous semi-metric) w.r.t. $d$ on $\mathcal{S}$.
For $X\in{C(\mathcal{S})}$, denote $q_\rho(X)=\displaystyle\sup_{s_1,s_2\in\mathcal{S}; s_1\neq{s_2}}\frac{|X(s_1)-X(s_2)|}{\rho(s_1,s_2)}$ and
$\|X\|_{C_\rho}=\max\left\{\displaystyle\sup_{s\in\mathcal{S}}|X(s)|, q_\rho(X)\right\}$.
Let $H(\mathcal{S}, \rho, r)$ denote the metric entropy of $(\mathcal{S}, \rho)$ w.r.t. $r$, that is, it's
the logarithm of the minimal number of $\rho$-balls with radius $r$ covering $\mathcal{S}$.

Let $\{Z_{nj}\}_{1\leq{j}\leq{J_n}, n\in{\mathbb{N}}}$ be a triangular array of random elements of the real Banach space $B$, defined on $(\Omega,\mathscr{F},P)$.
Assume further that $\{\mathscr{F}_{nj}\}_{0\leq{j}\leq{J_n}, n\in{\mathbb{N}}}$ is a set of sub-$\sigma$-fields of $\mathscr{F}$ such that $Z_{nj}$ is $\mathscr{F}_{nj}$-measurable with $\mathscr{F}_{nj-1}\subset\mathscr{F}_{nj}$ and $Z_{nj}\in\mathscr{F}_{nj}$.
Then $\{Z_{nj},\mathscr{F}_{nj}\}$ is called a martingale difference array $\cite{3}$ or a martingale triangular array $\cite{6}$ if $E[Z_{nj}\mid{\mathscr{F}_{nj-1}}]=0$ for $1\leq{j}\leq{J_n}$ and every $n\in{\mathbb{N}}$.

\begin{theorem}\label{pro4}
Let $\{Z_{nj},{\mathscr{F}_{nj}}\}$ be a martingale difference array of the space $C(\mathcal{S})$ and satisfy the following conditions:\par
\noindent $\boldsymbol{\rm(C1)}$ There exists a function $\varphi: \mathcal{S}\rightarrow\mathbb{R}_+$ such that for any $s\in\mathcal{S}$
\begin{eqnarray}
~~~~~~~~~~~~~~\displaystyle\sum_{j=1}^{J_n}E[Z^2_{nj}(s)\mid{\mathscr{F}_{nj-1}}]\stackrel{P}{\longrightarrow}\varphi(s)~~~~~~{\rm{as}}~~ n\rightarrow\infty;
\nonumber
\end{eqnarray}

\noindent $\boldsymbol{\rm(C2)}$ For every $s\in\mathcal{S}$ and $\epsilon>0$,
\begin{eqnarray}
~~~~~~\displaystyle\sum_{j=1}^{J_n}E[Z^2_{nj}(s)I_{\left(|Z_{nj}(s)|>\epsilon\right)}\mid{\mathscr{F}_{nj-1}}]\stackrel{P}{\longrightarrow}0~~~~~~{\rm{as}}~~ n\rightarrow\infty;
\nonumber
\end{eqnarray}

\noindent $\boldsymbol{\rm(C3)}$ For a continuous pseudo-distance $\rho$ that satisfies $\displaystyle\int_{0}^{a}{H^{\frac{1}{2}}(\mathcal{S},\rho,r)}{\mathrm{d}{r}}<\infty$ for some $a>0$, we have
\begin{eqnarray}
\displaystyle\sup_{n\in\mathbb{N}}\displaystyle\sum_{j=1}^{J_n}E[\|Z_{nj}\|^2_{C_\rho}]<\infty.
\nonumber
\end{eqnarray}
\end{theorem}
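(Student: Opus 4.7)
The plan is to establish the theorem by separating weak convergence in $C(\mathcal{S})$ into its two classical ingredients: convergence of the finite-dimensional distributions (fidis) of the partial-sum process $S_n(\cdot):=\sum_{j=1}^{J_n} Z_{nj}(\cdot)$, and tightness of the sequence $\{S_n\}$ as random elements of $C(\mathcal{S})$. The three hypotheses partition naturally between these tasks: (C1) and (C2) drive the fidis, while (C3) together with the metric-entropy integral drives tightness.

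First I would verify convergence of the fidis. Fix $s_1,\dots,s_k\in\mathcal{S}$ and view $(Z_{nj}(s_1),\dots,Z_{nj}(s_k))_{j=1}^{J_n}$ as an $\mathbb{R}^k$-valued martingale difference array with respect to $\{\mathscr{F}_{nj}\}$. Polarizing (C1) via $Z_{nj}(s_i)Z_{nj}(s_l)=\tfrac14\bigl[(Z_{nj}(s_i)+Z_{nj}(s_l))^2-(Z_{nj}(s_i)-Z_{nj}(s_l))^2\bigr]$ upgrades the diagonal variance convergence to convergence in probability of the conditional quadratic covariations
\[
\sum_{j=1}^{J_n} E\bigl[Z_{nj}(s_i)Z_{nj}(s_l)\mid\mathscr{F}_{nj-1}\bigr]\stackrel{P}{\longrightarrow}\Gamma(s_i,s_l),
\]
where $\Gamma$ is a symmetric bilinear form with diagonal $\varphi$. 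Condition (C2), upgraded in the same way together with Cauchy--Schwarz, furnishes the Lindeberg-type negligibility needed for the classical multivariate martingale CLT. Invoking that CLT yields $(S_n(s_1),\dots,S_n(s_k))\stackrel{\mathcal{D}}{\longrightarrow}N(0,\Gamma)$ and identifies the only possible weak limit as the centered Gaussian process on $\mathcal{S}$ with covariance $\Gamma$.

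The heart of the argument, and the main obstacle, is tightness in $C(\mathcal{S})$. Here I would combine the Dudley-type entropy hypothesis $\int_0^a H^{1/2}(\mathcal{S},\rho,r)\,\mathrm{d}r<\infty$ with a chaining argument driven by martingale maximal inequalities. The norm $\|Z_{nj}\|_{C_\rho}$ simultaneously controls the sup-norm and the $\rho$-Lipschitz seminorm of each trajectory, and the uniform summability of $E\|Z_{nj}\|_{C_\rho}^2$ from (C3) plays the role that sub-Gaussian increment bounds play in the independent case. Building a chain over $\rho$-nets of geometrically shrinking radius, whose cardinalities enter through $H^{1/2}(\mathcal{S},\rho,r)$, and invoking a Doob-type or Burkholder-type $L_2$-maximal inequality on each link of the chain, one extracts
\[
\lim_{\delta\to 0}\limsup_{n\to\infty}{\rm{Prob}}\!\left(\sup_{\rho(s,t)\leq\delta}|S_n(s)-S_n(t)|>\eta\right)=0
\]
for every $\eta>0$. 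Together with a uniform sup-bound recovered from the same $\|\cdot\|_{C_\rho}$-moment, this yields tightness via the Arzel\`{a}--Ascoli criterion for $C(\mathcal{S})$.

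Combining fidi convergence with tightness through Prohorov's theorem then delivers weak convergence of $S_n$ to the Gaussian process in $C(\mathcal{S})$. The delicate point is precisely that the familiar Gaussian/independent chaining bounds have to be replaced by their martingale analogues, because the summands $Z_{nj}$ are only conditionally centered and the increments $Z_{nj}(s)-Z_{nj}(t)$ are controlled in a conditional $L_2$-sense rather than an unconditional sub-Gaussian one; this is exactly where Theorem 5 of [8], cited in the preceding paragraph as the backbone of the result, supplies the needed maximal inequality. The remaining ingredients (polarization, Lindeberg verification, and the final Prohorov step) are essentially routine once the chaining estimate is in hand.
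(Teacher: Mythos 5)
You should first be aware that the paper offers no proof of Theorem \ref{pro4} to compare against: it is quoted as a known result from \cite[Theorem~3.2]{3} (itself derived from Theorem~5 of \cite{8}) and used as a black box. Judged on its own, your fidi-plus-tightness outline is the right template, but the polarization step is a genuine gap. Condition $\boldsymbol{\rm(C1)}$ controls $\sum_{j}E[Z_{nj}^2(s)\mid\mathscr{F}_{nj-1}]$ only at individual points $s\in\mathcal{S}$, and $Z_{nj}(s_i)\pm Z_{nj}(s_l)$ is not the evaluation of $Z_{nj}$ at any point of $\mathcal{S}$, so $\boldsymbol{\rm(C1)}$ gives no information about $\sum_{j}E[(Z_{nj}(s_i)\pm Z_{nj}(s_l))^2\mid\mathscr{F}_{nj-1}]$; the conditional covariances $\Gamma(s_i,s_l)$ that your multivariate martingale CLT requires are simply not determined by the stated hypotheses. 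This cannot be repaired from $\boldsymbol{\rm(C1)}$--$\boldsymbol{\rm(C3)}$ alone: take $\mathcal{S}=\{s_1,s_2\}$, $Z_{nj}(s_1)=\xi_j/\sqrt{n}$, $Z_{nj}(s_2)=c_j\xi_j/\sqrt{n}$ with $\xi_j$ i.i.d.\ Rademacher and $c_j\in\{\pm1\}$ deterministic with non-convergent Ces\`aro averages; all three conditions hold, yet the empirical correlation $\tfrac{1}{n}\sum_j c_j$ oscillates and $\sum_j Z_{nj}$ has no weak limit. The convergence of the full conditional covariance function must be assumed (as it presumably is in the source), and your proof silently assumes it at exactly this point.

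The tightness step also would not close as described. Link-by-link chaining with a Chebyshev or Doob/Burkholder $L_2$ maximal inequality pays a factor of the covering number $e^{H}$ per level against only a quadratic tail bound, which is incompatible with an entropy budget of $\int_0^a H^{1/2}(\mathcal{S},\rho,r)\,\mathrm{d}r$; that integral is calibrated for sub-Gaussian increments. Condition $\boldsymbol{\rm(C3)}$ is a Jain--Marcus-type Lipschitz hypothesis, and the way it is exploited in \cite{8} is not naive $L_2$ chaining but a domination argument carried out in the Lipschitz-type Banach space normed by $\|\cdot\|_{C_\rho}$. Deferring the hard estimate to Theorem~5 of \cite{8} is legitimate, but then your argument reduces to invoking the same external result the paper invokes, and the intervening chaining narrative, if executed literally, would fail.
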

\noindent Then there exists a Gaussian measure $\mu$ such that
\begin{eqnarray}
\displaystyle\sum_{j=1}^{J_n}Z_{nj}\stackrel{\mathcal{D}}{\longrightarrow}\mu  ~~~~~~{\rm{as}}~~ n\rightarrow\infty.
\nonumber
\end{eqnarray}

For a given $s\in\mathcal{S}$, the condition $\boldsymbol{\rm(C2)}$ is also called the Conditioned Lindeberg Condition, see $\cite{6}$.

\section{Uniform Exponential Convergence of SAA with AMIS}
\label{sec3}
Starting from this section, we will adhere to the following settings.

Let $\{\theta_i\}_{i=1}^{\infty}$ be a sequence of random vectors on $(\Omega,\mathscr{F},P)$, where
$\theta_i:\Omega\rightarrow\mathbb{R}^r$ is $\mathscr{F}$-measurable.
Let $\{\mathscr{G}_i\}_{i=1}^{\infty}$ be its corresponding natural filtration.
This implies that the information of $\theta_1,...,\theta_i$ is contained in $\mathscr{G}_i$.
For the sake of ease of presentation, denote $\mathscr{G}_0=\{\emptyset,\Omega\}$.
This implies that $E[\theta_1]=E[\theta_1\mid\mathscr{G}_0]$.
Moreover, for a given $\mathscr{G}_{i-1}$, assume that the conditional distribution of $\theta_i$ has a density $\psi_i$.
The corresponding support of $\psi_i$ is $\Theta_i$, where $\Theta_i\subset\mathbb{R}^r$.

Let $F:\mathcal{X}\times\mathbb{R}^r\rightarrow\mathbb{R}$ be a real valued function.
Moreover, $\mathcal{X}\times\Theta$ contains its support, where ${\mathcal{X}}$ is a compact subset of ${\mathbb{R}^n}$ and $\Theta\subset\mathbb{R}^r$.
Here, we use $\theta$ to denote the random vector $\theta(\omega)$ and a deterministic vector. The particular meaning
will be clear from the context.

\begin{assumption}\label{ass1}
For any $i\in\mathbb{N}$, $\Theta\subset\Theta_i$ a.s.
\end{assumption}

\begin{assumption}\label{ass2}
For any $x\in\mathcal{X}$, $F(x,\cdot)$ is an integrable function, that is, $f(x)=\displaystyle\int_{\Theta}{F(x,\theta)}{\mathrm{d}{\theta}}<\infty$.
\end{assumption}

The above two assumptions are the same as in $\cite{2}$.
For clarity of presentation, we emphasize that $F(\cdot,\theta)=0$ when $\theta\in\Theta_i\backslash\Theta$.

\begin{assumption}\label{ass3}
For a given $x\in\mathcal{X}$, ${\displaystyle\sup_{i\in\mathbb{N}}}{\displaystyle\sup_{\theta_{i}\in\Theta_{i}}}\left|{\frac{F(x,\theta_{i})}{\psi_{i}(\theta_{i})}}\right|<\infty$ a.s.
\end{assumption}

\begin{assumption}\label{ass4}
For any $i\in\mathbb{N}$ and every $x\in\mathcal{X}$, $\frac{F^2(x,\cdot)}{\psi_{i}(\cdot)}$ is integrable on $\Theta_{i}$.
\end{assumption}

\begin{lemma}\label{lem2}
Suppose Assumption $\ref{ass1}$ and $\ref{ass2}$ hold. Let $\Upsilon_i(x)=\frac{F(x,\theta_{i})}{\psi_{i}(\theta_{i})}-f(x)$ and $S_n(x)=\displaystyle\sum^{n}_{i=1}\Upsilon_i(x)$, where $n\in\mathbb{N}$. Then for a fixed $x\in{\mathcal{X}}$, $\{\Upsilon_i(x), \mathscr{G}_i\}$ is a martingale difference sequence and $\{S_n(x),\mathscr{G}_n\}$ is a martingale.
\end{lemma}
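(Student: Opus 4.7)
The plan is to verify the two defining properties for a martingale difference sequence applied to $\{\Upsilon_i(x),\mathscr{G}_i\}$: adaptedness/integrability, and the vanishing conditional mean. The martingale property of $\{S_n(x),\mathscr{G}_n\}$ then follows by linearity and summation. The key input is the definition of $\psi_i$ as the conditional density of $\theta_i$ given $\mathscr{G}_{i-1}$.

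First I would address adaptedness. Because $\theta_i$ is $\mathscr{G}_i$-measurable and $\psi_i$ is determined by the information in $\mathscr{G}_{i-1}$ (being the conditional density of $\theta_i$ given $\mathscr{G}_{i-1}$), the composition $\psi_i(\theta_i)$ is $\mathscr{G}_i$-measurable; hence so is $\Upsilon_i(x)$. Integrability of $\Upsilon_i(x)$ for fixed $x$ then comes from \cref{ass3}, which bounds $F(x,\theta_i)/\psi_i(\theta_i)$ almost surely, together with $f(x)<\infty$ from \cref{ass2}.

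The main computation is $E[\Upsilon_i(x)\mid\mathscr{G}_{i-1}]=0$. I would unpack this by invoking the conditional-density representation: since the regular conditional distribution of $\theta_i$ given $\mathscr{G}_{i-1}$ has density $\psi_i$ on $\Theta_i$,
\begin{eqnarray}
E\left[\frac{F(x,\theta_i)}{\psi_i(\theta_i)}\,\bigg|\,\mathscr{G}_{i-1}\right]
=\int_{\Theta_i}\frac{F(x,\theta)}{\psi_i(\theta)}\,\psi_i(\theta)\,\mathrm{d}\theta
=\int_{\Theta_i}F(x,\theta)\,\mathrm{d}\theta.
\nonumber
\end{eqnarray}
By \cref{ass1}, $\Theta\subset\Theta_i$ a.s., and by the convention immediately after \cref{ass2}, $F(x,\theta)=0$ whenever $\theta\in\Theta_i\setminus\Theta$. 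Therefore the last integral reduces to $\int_{\Theta}F(x,\theta)\,\mathrm{d}\theta=f(x)$, so subtracting $f(x)$ yields $E[\Upsilon_i(x)\mid\mathscr{G}_{i-1}]=0$. This is the step that requires the most care; the only subtlety is the bookkeeping on supports, but Assumptions \ref{ass1}--\ref{ass2} are tailored exactly for this.

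Finally, for $\{S_n(x),\mathscr{G}_n\}$ I would note that $S_n(x)$ is $\mathscr{G}_n$-measurable and integrable as a finite sum of adapted integrable terms, and then apply linearity of conditional expectation together with the previous step:
\begin{eqnarray}
E[S_n(x)\mid\mathscr{G}_{n-1}]=S_{n-1}(x)+E[\Upsilon_n(x)\mid\mathscr{G}_{n-1}]=S_{n-1}(x),
\nonumber
\end{eqnarray}
which establishes the martingale property. I do not anticipate any real obstacle beyond justifying the passage from conditional expectation to an integral against $\psi_i$ via the conditional density, which is standard once the conditional-density convention is in force.
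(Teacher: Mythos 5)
Your proposal is correct and follows essentially the same route as the paper: the paper's proof is exactly the one-line computation $E[\Upsilon_i(x)\mid\mathscr{G}_{i-1}]=\int_{\Theta_i}\left[F(x,\theta_i)-f(x)\psi_i(\theta_i)\right]\mathrm{d}\theta_i=0$ via the conditional density, with the martingale property of $S_n(x)$ noted as immediate; you simply spell out the adaptedness and integrability bookkeeping that the paper leaves implicit. One small remark: you invoke Assumption~\ref{ass3} for integrability, but that assumption is not among the lemma's hypotheses --- integrability already follows from Assumption~\ref{ass2} together with the convention $F(\cdot,\theta)=0$ on $\Theta_i\setminus\Theta$, since $E\left[\left|\frac{F(x,\theta_i)}{\psi_i(\theta_i)}\right|\,\middle\vert\,\mathscr{G}_{i-1}\right]=\int_{\Theta}|F(x,\theta)|\,\mathrm{d}\theta<\infty$.
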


This property is used in $\cite[Theorem\ 6]{2}$ for the proof of pointwise convergence.
For the convenience of reading this paper, we give a brief  proof.
\begin{proof}
For any $i\in\mathbb{N}$, we know that the conditional distribution of $\theta_i$ has a density $\psi_i$ under $\mathscr{G}_{i-1}$.
Hence, for a given $x\in{\mathcal{X}}$,
\begin{eqnarray}
E[\Upsilon_i(x)\mid{\mathscr{G}_{i-1}}]=\displaystyle\int_{\Theta_i}{\Upsilon_i(x)}{\psi_{i}(\theta_{i})}{\mathrm{d}{\theta_{i}}}
=\displaystyle\int_{\Theta_i}\left[{F(x,\theta_{i})}-f(x){\psi_{i}(\theta_{i})}\right]{\mathrm{d}{\theta_{i}}}=0.
\nonumber
\end{eqnarray}
This means that $\{\Upsilon_i(x), \mathscr{G}_i\}$ is a martingale difference sequence.
It's obvious that $\{S_n(x),\mathscr{G}_n\}$ is a martingale.
\end{proof}

Henceforth, $\Upsilon_i(x)$ and $S_n(x)$ in the paper are defined as in Lemma $\ref{lem2}$.
\begin{lemma}\label{lem3}
Suppose that Assumptions $\ref{ass1}$-$\ref{ass4}$ hold. Then for all $x\in{\mathcal{X}}$, there exist $L(x)>0$ and $C(x)>0$ such that $|\Upsilon_i(x)|\leq{L(x)}$ and
$E[\Upsilon^2_i(x)\mid{\mathscr{G}_{i-1}}]\leq{C(x)}$.
\end{lemma}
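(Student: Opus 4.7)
The plan is to notice that both required bounds follow almost immediately from Assumption $\ref{ass3}$, which provides a uniform-in-$i$ (almost sure) bound on the ratio $|F(x,\theta_i)/\psi_i(\theta_i)|$. Neither inequality really requires work; the subtlety is simply recognising that the conditional-variance bound collapses onto the pointwise bound.

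First I would fix $x \in \mathcal{X}$ and set $M(x) := \sup_{i \in \mathbb{N}} \sup_{\theta_i \in \Theta_i} |F(x,\theta_i)/\psi_i(\theta_i)|$, which is finite a.s.\ by Assumption $\ref{ass3}$. Since $|f(x)| < \infty$ by Assumption $\ref{ass2}$, the triangle inequality gives
$$|\Upsilon_i(x)| \leq \left|\frac{F(x,\theta_i)}{\psi_i(\theta_i)}\right| + |f(x)| \leq M(x) + |f(x)|$$
for every $i \in \mathbb{N}$, so taking $L(x) := M(x) + |f(x)|$ settles the first inequality.

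For the conditional-variance bound, the crudest estimate is enough: since $\Upsilon_i^2(x) \leq L(x)^2$ pointwise, monotonicity of conditional expectation yields $E[\Upsilon_i^2(x) \mid \mathscr{G}_{i-1}] \leq L(x)^2$ a.s., and I would take $C(x) := L(x)^2$. One could alternatively unpack the conditional expectation explicitly, writing $E[\Upsilon_i^2(x) \mid \mathscr{G}_{i-1}] = \int_{\Theta_i} F^2(x,\theta_i)/\psi_i(\theta_i)\,\mathrm{d}\theta_i - f^2(x)$ (using that $\psi_i$ is the conditional density of $\theta_i$ given $\mathscr{G}_{i-1}$ and that $\Upsilon_i$ is a centred martingale difference by Lemma $\ref{lem2}$), which is finite for each $i$ by Assumption $\ref{ass4}$. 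But that route only delivers a per-$i$ bound without uniformity, whereas Assumption $\ref{ass3}$ closes both estimates in a single stroke.

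The main obstacle is essentially absent; the only point of care is interpretive. Because $\psi_i$ depends on $\theta_1,\ldots,\theta_{i-1}$, Assumption $\ref{ass3}$ is stated a.s., and consequently $L(x)$ and $C(x)$ should be understood as a.s.\ finite quantities on a full-measure event rather than deterministic constants. This matches the wording of Lemma $\ref{lem3}$, which only asserts the existence of positive $L(x), C(x)$. Assumption $\ref{ass4}$ is not really used for this lemma; presumably it earns its keep later, for instance in verifying a Lindeberg-type condition such as $\boldsymbol{\mathrm{(C2)}}$ of Theorem $\ref{pro4}$.
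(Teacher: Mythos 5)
Your proposal is correct, and the first half (the bound $L(x)=M(x)+|f(x)|$ via the triangle inequality and Assumption \ref{ass3}) is exactly the paper's argument, with your $M(x)$ playing the role of the paper's $\beta(x)$. For the conditional-variance bound the two routes diverge: you take the crude pointwise estimate $\Upsilon_i^2(x)\le L(x)^2$ and invoke monotonicity of conditional expectation, getting $C(x)=L(x)^2=(\beta(x)+|f(x)|)^2$, whereas the paper unpacks $E[\Upsilon_i^2(x)\mid\mathscr{G}_{i-1}]$ as an integral against $\psi_i$, bounds $F^2/\psi_i\le\beta(x)F$, and integrates to obtain the sharper constant $C(x)=\beta(x)(|f(x)|+1)$. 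Your route is simpler and actually more robust: the paper's step $F^2/\psi_i\le\beta(x)F$ tacitly requires $F(x,\cdot)\ge 0$, and the paper needs Assumption \ref{ass4} to justify that the conditional second moment is well defined before manipulating it, whereas under your bounded-by-$L(x)^2$ argument both issues evaporate (you correctly note Assumption \ref{ass4} is then redundant here). The only caveat concerns your closing interpretive remark: since $L(x)$ and $C(x)$ are fed into Proposition \ref{pro1} as the constants $b(x),k(x)$ of the concentration inequality in Lemma \ref{lem1}, they must ultimately be deterministic; if one reads Assumption \ref{ass3} as giving only an a.s.-finite random $M(x)$, then the step $E[\Upsilon_i^2(x)\mid\mathscr{G}_{i-1}]\le L(x)^2$ is also no longer immediate (since $L(x)$ need not be $\mathscr{G}_{i-1}$-measurable). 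The paper silently adopts the deterministic reading, and your proof is complete under that same reading.
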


\begin{proof}

By Assumption $\ref{ass4}$, $E[\Upsilon^2_i(x)\mid{\mathscr{G}_{i-1}}]$ is well defined for each $x$.
Furthermore, according to Assumption $\ref{ass3}$, there is a function $\beta(x)>0$ such that for any $i\in\mathbb{N}$ and all $x\in\mathcal{X}$, $\left|\frac{F(x,\theta_{i})}{\psi_{i}(\theta_{i})}\right|\leq\beta(x)$.
Therefore, for a fixed $x$, $|\Upsilon_i(x)|=\left|\frac{F(x,\theta_{i})}{\psi_{i}(\theta_{i})}-f(x)\right|\leq{\beta(x)+|f(x)|}$. From the above analysis, we have
\begin{eqnarray}
E[\Upsilon^2_i(x)\mid{\mathscr{G}_{i-1}}]&=&\displaystyle\int_{\Theta_i}{\Upsilon^2_i(x)}{\psi_{i}(\theta_{i})}{\mathrm{d}{\theta_{i}}}
\nonumber \\
&=&\displaystyle\int_{\Theta_i}{\left[{\frac{F(x,\theta_{i})}{\psi_{i}(\theta_{i})}-f(x)}\right]^2}{\psi_{i}(\theta_{i})}{\mathrm{d}{\theta_{i}}}
\nonumber \\
&\leq&\displaystyle\int_{\Theta_i}\left[\beta(x){F(x,\theta_{i})}-2f(x){F(x,\theta_{i})}+f^2(x){\psi_{i}(\theta_{i})}\right]{\mathrm{d}{\theta_{i}}}
\nonumber \\
&=&\beta(x){f(x)}-f^2(x)
\nonumber \\
&\leq&\beta(x){(|f(x)|+1)}.
\nonumber
\end{eqnarray}
Denote $L(x)={\beta(x)+|f(x)|}$ and $C(x)=\beta(x){(|f(x)|+1)}$.
The proof is complete.
\end{proof}

Combining Lemma $\ref{lem2}$ and Lemma $\ref{lem3}$, applying the concentration inequality in Lemma $\ref{lem1}$, we obtain pointwise weak laws of large numbers (P-WLLN) as follows.

\begin{proposition}\label{pro1}
Suppose Assumptions $\ref{ass1}$-$\ref{ass4}$ hold. Then for any given $x\in{\mathcal{X}}$,
there exist $b(x)>0$ and $k(x)>0$ such that for $0<\epsilon<b(x)$,
\begin{eqnarray}
{\rm{Prob}}\left({\textstyle{\frac{1}{n}}}S_n(x)\geq{\epsilon}\right)\leq{{e}^{-n\mathcal{H}\left(\frac{b(x)\epsilon+k(x)}{b^{2}(x)+k(x)}\big{|}\frac{k(x)}{b^{2}(x)+k(x)}\right)}},\label{eqn8}
\\
{\rm{Prob}}\left({\textstyle{\frac{1}{n}}}S_n(x)\leq{-\epsilon}\right)\leq{{e}^{-n\mathcal{H}\left(\frac{b(x)\epsilon+k(x)}{b^{2}(x)+k(x)}\big{|}\frac{k(x)}{b^{2}(x)+k(x)}\right)}}.\label{eqn14}
\end{eqnarray}
\end{proposition}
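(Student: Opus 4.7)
The plan is to apply Lemma~\ref{lem1} twice with the martingale difference sequence provided by Lemma~\ref{lem2}, taking the increments once with their natural sign and once flipped. Fix $x\in\mathcal{X}$ throughout, and set $b(x):=L(x)$ and $k(x):=C(x)$ with $L(x)$ and $C(x)$ as furnished by Lemma~\ref{lem3}. The filtration $\{\mathscr{G}_i\}$ plays the role of $\{\mathscr{Y}_i\}$ in Lemma~\ref{lem1}.

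For inequality~\eqref{eqn8}, I would take $Y_i=\Upsilon_i(x)$. Lemma~\ref{lem2} gives $E[\Upsilon_i(x)\mid\mathscr{G}_{i-1}]=0$, so $\{\Upsilon_i(x),\mathscr{G}_i\}$ is a martingale difference sequence with $V_n=S_n(x)$. Lemma~\ref{lem3} supplies the two remaining hypotheses of Lemma~\ref{lem1}: $\Upsilon_i(x)\le|\Upsilon_i(x)|\le L(x)=b(x)$ and $E[\Upsilon_i^2(x)\mid\mathscr{G}_{i-1}]\le C(x)=k(x)$. Hence Lemma~\ref{lem1} applies verbatim, and for any $0<\epsilon<b(x)$ its conclusion is precisely~\eqref{eqn8}.

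For inequality~\eqref{eqn14}, I would observe that ${\rm Prob}(\frac{1}{n}S_n(x)\le-\epsilon)={\rm Prob}(\frac{1}{n}\sum_{i=1}^{n}[-\Upsilon_i(x)]\ge\epsilon)$, and then apply Lemma~\ref{lem1} to the sequence $Y_i=-\Upsilon_i(x)$. This sequence is again a martingale difference adapted to $\{\mathscr{G}_i\}$, and its a.s.\ upper bound and conditional second moment are the same as those of $\Upsilon_i(x)$, namely $L(x)$ and $C(x)$ respectively; in particular the triple $(b(x),k(x),\epsilon)$ is unchanged, so the right-hand side of~\eqref{eqn14} matches that of~\eqref{eqn8}.

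There is no substantive obstacle here; the proof is essentially a bookkeeping exercise of matching the hypotheses of Lemma~\ref{lem1} to the objects constructed in Lemmas~\ref{lem2} and~\ref{lem3}. The only subtlety worth flagging is that Lemma~\ref{lem3} controls $|\Upsilon_i(x)|$ uniformly, which is exactly what is required to verify the one-sided bound $\pm\Upsilon_i(x)\le b(x)$ needed in both directions, and that the restriction $0<\epsilon<b(x)$ transfers directly from Lemma~\ref{lem1} into the statement of Proposition~\ref{pro1}.
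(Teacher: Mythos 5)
Your proposal is correct and follows essentially the same route as the paper: verify the hypotheses of Lemma~\ref{lem1} via Lemmas~\ref{lem2} and~\ref{lem3} with $b(x)=L(x)$, $k(x)=C(x)$ to get \eqref{eqn8}, then apply Lemma~\ref{lem1} to the negated sequence $-\Upsilon_i(x)$ (the paper calls it $\Gamma_i(x)$) to get \eqref{eqn14}. No gaps.
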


\begin{proof}
For a fixed $x\in{\mathcal{X}}$, it follows from Lemmas $\ref{lem2}$ and Lemma $\ref{lem3}$ that
\begin{eqnarray}
E[\Upsilon_i(x)\mid{\mathscr{G}_{i-1}}]=0,~~ |\Upsilon_i(x)|\leq{b(x)} ~~{\rm{and}}~~
E[\Upsilon^2_i(x)\mid{\mathscr{G}_{i-1}}]\leq{k(x)}
\nonumber
\end{eqnarray}
with $b(x)>0$ and $k(x)>0$.
This implies that $\Upsilon_i(x)$ satisfies the conditions in Lemma $\ref{lem1}$.
Therefore, for $0<\epsilon<b(x)$, $(\ref{eqn8})$ holds.\par
On the other hand, let $\Gamma_i(x)=-\Upsilon_i(x)$, then it is not difficult to verify that
for $b(x)>0$ and $k(x)>0$ above,
\begin{eqnarray}
&&E[\Gamma_i(x)\mid{\mathscr{G}_{i-1}}]=-E[\Upsilon_i(x)\mid{\mathscr{G}_{i-1}}]=0,
\nonumber \\
&&~~~~~~~~~~~|\Gamma_i(x)|=|\Upsilon_i(x)|\leq{b(x)},
\nonumber \\
&&E[\Gamma^2_i(x)\mid{\mathscr{G}_{i-1}}]=E[\Upsilon^2_i(x)\mid{\mathscr{G}_{i-1}}]\leq{k(x)}.
\nonumber
\end{eqnarray}
Therefore, using Lemma $\ref{lem1}$ again, we have
\begin{eqnarray}
{\rm{Prob}}\left({\textstyle{-\frac{1}{n}}}S_n(x)\geq{\epsilon}\right)\leq{{e}^{-n\mathcal{H}\left(\frac{b(x)\epsilon+k(x)}{b^{2}(x)+k(x)}\big{|}\frac{k(x)}{b^{2}(x)+k(x)}\right)}},
\nonumber
\end{eqnarray}
where $0<\epsilon<b(x)$.
Thus, $(\ref{eqn14})$ is proved.
\end{proof}

According to the work of Andrews, it is shown that the weak uniform convergence can be obtained by P-WLLN combined with the weak Lipschitz (W-LIP) assumption if ${\mathcal{X}}$ is totally bounded, see $\cite[Theorem\ 3(a)]{11}$.
Therefore, in order to obtain uniform exponential convergence, we need some additional conditions.
Inspired by W-LIP, we need the help of the following definition.
Its form draws on Xu's extension of the definition of calmness, see reference $\cite{12}$ for details.
A basic definition of calmness can be found  in $\cite{13}$.
\begin{definition}\label{def1}(Adaptive Multiple Modulus Calmness {\rm\textbf{(AMMC for short)}}). 
Let $g:\textbf{X}\times\Theta\rightarrow\mathbb{R}$ be a real valued function, where $\textbf{X}$ is a closed subset of ${\mathbb{R}^n}$ and $\Theta\subset\mathbb{R}^r$.
Let ${\theta}:\Omega\rightarrow\Theta$ be a random vector defined on a probability space $(\Omega,\mathscr{F},P)$.
Let $\{\theta_i\}_{i=1}^{\infty}$ and $\{\mathscr{G}_i\}_{i=1}^{\infty}$ be its samples and corresponding natural filtration, respectively.
Suppose that $\varpi(\cdot)$ is a modulus of continuity and Assumption $\ref{ass1}$ holds. Then for a given $x_0\in{\textbf{X}}$, in the sense of the function $\varpi$, $g(x,\theta)$ is said to be\par
\noindent (i) {\textbf{AMMC from below}} at $x_0$ with adaptive multiple coefficient ${\alpha_{i}}$,
if there exist $\delta(x_0)>0$ and a family of integrable functions
${\alpha_{i}}:{\Theta_{i}}\rightarrow{\mathbb{R}_{+}}$ for every ${i}\in{\mathbb{N}}$, which are adapted to the filtration $\mathscr{G}_{i-1}$,  such that
\begin{eqnarray}
g(x,\theta_{i})-g(x_0,\theta_{i})\geq-{\alpha_{i}(\theta_{i})}\varpi(\|x-x_0\|),~~~~~~\forall{\theta_{i}\in\Theta_{i}},
\nonumber
\end{eqnarray}
for all $x\in{\textbf{X}}$ with $0<\|x-x_0\|<\delta(x_0)$;\par
\noindent (ii) \textbf{AMMC from above} at $x_0$ with adaptive multiple coefficient ${\alpha_{i}}$,
if $-g(x,\theta)$ is AMMC from below at $x_0$ with the same adaptive multiple coefficient, i.e.,
\begin{eqnarray}
g(x,\theta_{i})-g(x_0,\theta_{i})\leq{\alpha_{i}(\theta_{i})}\varpi(\|x-x_0\|),~~~~~~~\forall{\theta_{i}\in\Theta_{i}},
\nonumber
\end{eqnarray}
for all $x\in{\textbf{X}}$ with $0<\|x-x_0\|<\delta(x_0)$;\par
\noindent (iii) \textbf{AMMC} at $x_0$ with adaptive multiple coefficient ${\alpha_{i}}$,
if $g(x,\theta)$ is AMMC both from below and above at $x_0$ with the same adaptive multiple coefficient, i.e.,
\begin{eqnarray}
|g(x,\theta_{i})-g(x_0,\theta_{i})|\leq{\alpha_{i}(\theta_{i})}\varpi(\|x-x_0\|),~~~~~~\forall{\theta_{i}\in\Theta_{i}},
\nonumber
\end{eqnarray}
for all $x\in{\textbf{X}}$ with $0<\|x-x_0\|<\delta(x_0)$.
\end{definition}

In particular, AMMC also has the following statements. \par
\noindent(i) If $x_0$ in the inequalities of Definition $\ref{def1}$ is replaced by any $x'$, where $x,x'\in{\textbf{X}}$ and $0<\|x-x'\|<\delta(x_0)$, then $g(x,\theta)$ is called strictly AMMC (/from below/from above) at $x_0$.\par
\noindent(ii) If every point on ${\textbf{X}}$ satisfies Definition $\ref{def1}$, we say that $g(x,\theta)$ is AMMC (/from below/from above) on ${\textbf{X}}$.\par

\begin{lemma}\label{lem4}
Let $g(x,\theta)$ be strictly AMMC (/from below/from above) on $\textbf{X}$ in the sense of the function $\varpi$ with adaptive multiple coefficient ${\alpha_{i}}$, where $\{\theta_i\}_{i=1}^{\infty}$ and $\{\mathscr{G}_i\}_{i=1}^{\infty}$ are its samples and corresponding natural filtration, respectively. If $\textbf{X}$ is a compact set, then there exists a uniformly positive number $\delta$, such that
\begin{eqnarray}
|g(x_1,\theta_i)-g(x_2,\theta_i)|\leq{\alpha_{i}(\theta_{i})}\varpi(\|x_1-x_2\|),~~~~~~\forall{\theta_{i}\in\Theta_{i}},
 \nonumber
\end{eqnarray}
$(~or~g(x_1,\theta_i)-g(x_2,\theta_i)\geq{-\alpha_{i}(\theta_{i})}\varpi(\|x_1-x_2\|$
$
~~or~~g(x_1,\theta_i)-g(x_2,\theta_i)\leq{\alpha_{i}(\theta_{i})}\varpi(\|x_1-x_2\|)
$
for $0<\|{x_1}-{x_2}\|<\delta$ and $x_1,x_2\in\textbf{X}$.
\end{lemma}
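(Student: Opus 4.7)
The plan is a compactness argument in the spirit of the Lebesgue number lemma. The strictly AMMC hypothesis gives, at each $x_0 \in \textbf{X}$, a positive radius $\delta(x_0)$ such that the pairwise inequality in Definition~\ref{def1} holds for any two points in a neighborhood of $x_0$ whose separation is less than $\delta(x_0)$. Since these radii depend on $x_0$, the goal is simply to extract a single uniform $\delta > 0$ that works throughout $\textbf{X}$; compactness of $\textbf{X}$ is what makes this extraction possible.

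First I would form the open cover $\{B(x,\delta(x)/2) : x \in \textbf{X}\}$ of the compact set $\textbf{X}$ and extract a finite subcover $B(x_j,\delta(x_j)/2)$ for $j=1,\dots,N$. I would then define $\delta := \min_{1 \leq j \leq N} \delta(x_j)/2$, which is strictly positive because it is the minimum of finitely many positive numbers. This is my candidate for the uniform modulus.

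Next, I would verify the desired inequality. Pick any $y_1, y_2 \in \textbf{X}$ with $0 < \|y_1 - y_2\| < \delta$. Since the subcover contains $\textbf{X}$, there is an index $j$ with $y_1 \in B(x_j,\delta(x_j)/2)$. The triangle inequality then gives $\|y_2 - x_j\| \leq \|y_1 - y_2\| + \|y_1 - x_j\| < \delta + \delta(x_j)/2 \leq \delta(x_j)$, so $y_2$ also lies in $B(x_j,\delta(x_j))$. Moreover $\|y_1 - y_2\| < \delta \leq \delta(x_j)/2 < \delta(x_j)$, so the strictly AMMC property at $x_j$ applies to the pair $(y_1,y_2)$ and yields the bound $|g(y_1,\theta_i) - g(y_2,\theta_i)| \leq \alpha_i(\theta_i)\varpi(\|y_1-y_2\|)$. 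The from-below and from-above variants are obtained by running the identical argument with the corresponding one-sided inequalities from Definition~\ref{def1}.

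The main obstacle is quite mild: it lies in the triangle-inequality bookkeeping, specifically in choosing $\delta(x)/2$ rather than $\delta(x)$ when forming the initial cover. This halving is precisely what forces $y_2$ into the same strict-AMMC ball as $y_1$, so that a single local constant $\delta(x_j)$ controls the pair $(y_1,y_2)$; without it, $y_2$ could slip into a neighboring ball and the local estimate from $x_j$ would no longer be available.
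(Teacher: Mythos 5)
Your proposal is correct and follows essentially the same compactness argument as the paper: cover $\textbf{X}$ by half-radius balls, pass to a finite subcover, take $\delta$ as the minimum of the halved radii, and use the triangle inequality to force both points into a single original ball where the local strict-AMMC estimate applies. The only cosmetic difference is that the paper locates the covering ball via the midpoint $\frac{z_1+z_2}{2}$ while you argue directly from $y_1$; both verifications are sound.
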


\begin{proof}
Since $g(x,\theta)$ is strictly AMMC on $\textbf{X}$, around each $y\in\textbf{X}$, there exists a ball of some radius $\delta_y>0$ such that for any two points $y_1, y_2\in\textbf{X}$ in this ball,
\begin{eqnarray}
~~~~~|g(y_1,\theta_i)-g(y_2,\theta_i)|\leq{\alpha_{i}(\theta_{i})}\varpi(\|y_1-y_2\|),~~~~\forall{i}\in{\mathbb{N}}~~\rm{and}~~\forall{\theta_{i}\in\Theta_{i}}.
 \nonumber
\end{eqnarray}
Then ${\displaystyle\bigcup_{y\in\textbf{X}}}~\mathcal{B}(y,\frac{\delta_y}{2})$ covers $\textbf{X}$. By the compactness of $\textbf{X}$, it follows from the finite covering theorem that a finite number of these balls suffice to cover it.
Thus, there exist points $y_k$ with corresponding radius $\delta_k>0$ for $k=1,...,r$ such that $\textbf{X}\subset{\displaystyle\bigcup^{r}_{k=1}}~\mathcal{B}(y_k,\frac{\delta_k}{2})$.\par
Let $0<\delta_0<\frac{\delta_k}{2}$ for all $1\leq k\leq r$.
For any pair $z_1,z_2\in{\textbf{X}}$ with $0<\|z_1-z_2\|<\delta_0$, there always exists some index $k'\in\{1,...,r\}$ such that $z=\frac{z_1+z_2}{2}\in\mathcal{B}(y_{k'},\frac{\delta_{k'}}{2})$.
Therefore, $z_1, z_2\in\mathcal{B}(z,\delta_0)\subset\mathcal{B}(y_{k'},\delta_{k'})$.
This implies that
\begin{eqnarray}
|g(z_1,\theta_i)-g(z_2,\theta_i)|\leq{\alpha_{i}(\theta_{i})}\varpi(\|z_1-z_2\|),~~~~~~\forall{\theta_{i}\in\Theta_{i}},
 \nonumber
\end{eqnarray}
for any $z_1,z_2\in\textbf{X}$ with $0<\|z_1-z_2\|<\delta_0$ and all ${i}\in{\mathbb{N}}$.
Thus, the proof is complete.
\end{proof}

Now, we state the main theorem of this section.
To simplify notation, denote
\begin{eqnarray}
H_n(x):=f_n(x)-f(x)=\frac{1}{n}\displaystyle\sum_{i=1}^{n}\Upsilon_i(x)=\frac{1}{n}S_n(x).
\nonumber
\end{eqnarray}

The purpose of using W-LIP is to deduce that the sequence studied is stochastically equicontinuous, see $\cite[Lemma\ 2(a)] {11}$.
In the following theorem, the condition $\boldsymbol{\rm(H0)}$ and strictly AMMC ensure that $\{H_n(x)\}$ is stochastically equicontinuous on ${\mathcal{X}}$.
Now, we prove the main theorem in this subsection, which gives uniform exponential convergence of SAA with AMIS.
\begin{theorem}\label{the1}
Suppose that Assumptions $\ref{ass1}$-$\ref{ass4}$ hold.
Let $f(x)$ be continuous on the compact set ${\mathcal{X}}$.
Then the following statements hold:\par
\noindent (i) If $F(x,\theta)$ is strictly AMMC from below on ${\mathcal{X}}$ in the sense of the function $\varpi$ with adaptive multiple coefficient ${\alpha_{i}}$, and \par
\noindent $\boldsymbol{\rm(H0)}$ for any $\varepsilon>0$, there exist constants $M(\varepsilon)>0$ and $\tau(\varepsilon)>0$ such that
\begin{eqnarray}\label{eqn12}
{\rm{Prob}}\big(A_n\geq\varepsilon{M(\varepsilon)}\big)\leq{e^{-n\tau(\varepsilon)}},
\end{eqnarray}
where $A_n:=\frac{1}{n}\displaystyle\sum_{i=1}^{n}A_i(\theta_i)$ and $A_i(\theta_i):=\frac{\alpha_i(\theta_{i})}{\psi_{i}(\theta_{i})}$, \par
\noindent then for every $\varepsilon>0$, there exist positive constants $\lambda(\varepsilon)$ and $\gamma(\varepsilon)$, such that
\begin{eqnarray}
{\rm{Prob}}\left\{\displaystyle\inf_{x\in{{\mathcal{X}}}}H_n(x)\leq{-\varepsilon}\right\}\leq\lambda(\varepsilon)e^{-n\gamma(\varepsilon)};
\nonumber
\end{eqnarray}
\noindent (ii) If $F(x,\theta)$ is strictly AMMC from above on ${\mathcal{X}}$ in the sense of the function $\varpi$ with adaptive multiple coefficient ${\alpha_{i}}$, and
\noindent $\boldsymbol{\rm(H0)}$ holds, then for every $\varepsilon>0$, there exist positive constants $\lambda(\varepsilon)$ and $\gamma(\varepsilon)$ such that
\begin{eqnarray}
{\rm{Prob}}\left\{\displaystyle\sup_{x\in{\mathcal{X}}}H_n(x)\geq{\varepsilon}\right\}\leq\lambda(\varepsilon)e^{-n\gamma(\varepsilon)};
\nonumber
\end{eqnarray}
\noindent (iii) If $F(x,\theta)$ is strictly AMMC on ${\mathcal{X}}$ in the sense of the function $\varpi$ with adaptive multiple coefficient ${\alpha_{i}}$, and
\noindent $\boldsymbol{\rm(H0)}$ holds, then for every $\varepsilon>0$, there exist positive constants $\lambda(\varepsilon)$ and $\gamma(\varepsilon)$ such that
\begin{eqnarray}
{\rm{Prob}}\left\{\displaystyle\sup_{x\in{\mathcal{X}}}|H_n(x)|\geq{\varepsilon}\right\}\leq\lambda(\varepsilon)e^{-n\gamma(\varepsilon)}.
\nonumber
\end{eqnarray}

\end{theorem}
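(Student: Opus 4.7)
The strategy is to combine the pointwise concentration of Proposition $\ref{pro1}$ with a stochastic equicontinuity argument delivered by the AMMC condition, through a finite-cover reduction of the compact set $\mathcal{X}$. I will argue part (i) in detail; parts (ii) and (iii) then follow by symmetry and one further union bound.

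Fix $\varepsilon>0$. Invoke hypothesis $\boldsymbol{\rm(H0)}$ with this $\varepsilon$ to obtain constants $M=M(\varepsilon)$ and $\tau=\tau(\varepsilon)$ with $\mathrm{Prob}(A_n\geq \varepsilon M)\leq e^{-n\tau}$. Since $\varpi$ is a modulus of continuity and $f$ is continuous (hence uniformly continuous) on $\mathcal{X}$, choose $\eta\in(0,\delta)$ small enough that $\varpi(\eta)<1/(3M)$ and $|f(x)-f(x')|<\varepsilon/3$ whenever $\|x-x'\|<\eta$, where $\delta$ is the uniform radius supplied by Lemma $\ref{lem4}$ applied to the strict AMMC-from-below of $F$ on $\mathcal{X}$. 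Compactness produces a finite cover $\mathcal{X}\subseteq\bigcup_{k=1}^{N} B(x_k,\eta)$. For $x\in B(x_k,\eta)$, AMMC from below and the positivity of $\psi_i$ yield $F(x,\theta_i)/\psi_i(\theta_i)\geq F(x_k,\theta_i)/\psi_i(\theta_i)-A_i(\theta_i)\varpi(\eta)$; averaging over $i$, subtracting $f(x)$, and using $f(x)\leq f(x_k)+\varepsilon/3$ produces $H_n(x)\geq H_n(x_k)-A_n\varpi(\eta)-\varepsilon/3$. On the event $\{A_n<\varepsilon M\}$ the middle term is below $\varepsilon/3$, so $H_n(x)\geq H_n(x_k)-2\varepsilon/3$ uniformly on the $k$-th ball. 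Consequently $\{\inf_{x\in\mathcal{X}} H_n(x)\leq-\varepsilon\}\cap\{A_n<\varepsilon M\}\subseteq\bigcup_{k=1}^{N}\{H_n(x_k)\leq-\varepsilon/3\}$, and a union bound combined with inequality $\eqref{eqn14}$ of Proposition $\ref{pro1}$ at each $x_k$ (taking the threshold to be $\min\{\varepsilon/3,b(x_k)/2\}$ when necessary to satisfy the constraint $\epsilon<b(x_k)$, which only tightens the bound) delivers $\mathrm{Prob}\{\inf_{x\in\mathcal{X}} H_n(x)\leq -\varepsilon\}\leq(N+1)e^{-n\gamma}$ with $\lambda(\varepsilon)=N+1$ and $\gamma(\varepsilon)=\min\{\tau,\gamma_1,\ldots,\gamma_N\}>0$. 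Part (ii) is completely symmetric: use AMMC from above to get $H_n(x)\leq H_n(x_k)+A_n\varpi(\eta)+\varepsilon/3$, then apply inequality $\eqref{eqn8}$ at each $x_k$. Part (iii) follows from the inclusion $\{|H_n(x)|\geq\varepsilon\}\subseteq\{\sup_{x} H_n\geq\varepsilon\}\cup\{\inf_{x} H_n\leq-\varepsilon\}$ together with one more union bound.

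The main technical obstacle is the coordinated choice of the three scales in play: the random AMMC slack $A_n\varpi(\eta)$, the deterministic modulus of continuity of $f$, and the pointwise deviation $\varepsilon/3$ fed into Proposition $\ref{pro1}$. The product $\varepsilon M(\varepsilon)$ appearing in $\boldsymbol{\rm(H0)}$ is crucial: it makes the choice $\varpi(\eta)<1/(3M(\varepsilon))$ successful by letting $\varpi(\eta)$ cancel $M(\varepsilon)$, leaving slack of order $\varepsilon$ on the high-probability event $\{A_n<\varepsilon M\}$. The cover cardinality $N$ depends on $\varepsilon$ through $\eta$, but since $N$ is finite once $\varepsilon$ is fixed it only enters the prefactor $\lambda(\varepsilon)$ and not the exponent $\gamma(\varepsilon)$.
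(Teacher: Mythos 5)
Your proof is correct and follows essentially the same route as the paper's: a finite cover of the compact set $\mathcal{X}$, pointwise concentration from Proposition~\ref{pro1} at the ball centers, the AMMC-controlled oscillation bound $A_n\varpi(\eta)$ within each ball, and $\boldsymbol{\rm(H0)}$ to handle the event where $A_n$ is large. The only differences are cosmetic: the allocation of constants ($\varepsilon/3$ versus the paper's split into $\varepsilon/2$ and $\varepsilon/4$) and your slightly more careful treatment of the constraint $\epsilon<b(x_k)$ in Proposition~\ref{pro1}, which the paper instead sidesteps by assuming $\inf_{x}b(x)>0$.
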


\begin{proof}
(i) For any given $\varepsilon>0$ and fixed $x\in{\mathcal{X}}$, it follows from Proposition $\ref{pro1}$ that
there exist $b(x)>0$ and $k(x)>0$ such that for $0<\varepsilon<b(x)$,
\begin{eqnarray}\label{eqn13}
{\rm{Prob}}\left(H_n(x)\leq{-\varepsilon}\right)\leq{{e}^{-n\mathcal{H}\left(\frac{b(x)\varepsilon+k(x)}{b^{2}(x)+k(x)}\big{|}\frac{k(x)}{b^{2}(x)+k(x)}\right)}}.
\end{eqnarray}
For the sake of discussion, we assume that $\displaystyle\inf_{x\in{\mathcal{X}}}{b(x)}=b_0>0$. Therefore, let $0<\varepsilon<b_0$, then $\eqref{eqn13}$ holds for all $x\in\mathcal{X}$.\par

Note that $f(x)$ is a continuous function on the compact set ${\mathcal{X}}$, which implies that $f(x)$ is uniformly continuous. That is, there exists $\delta_1>0$ such that
\begin{eqnarray}\label{eqn16}
|f(x)-f(x')|\leq\frac{\varepsilon}{4}
\end{eqnarray}
for all $x,x'\in{\mathcal{X}}$ with $\|x-x'\|\leq{\delta_1}$.
It follows from Lemma $\ref{lem4}$ that
for $\frac{\varepsilon}{4}>0$ and all $i\in\mathbb{N}$, there exists $\delta_0>0$ such that
\begin{eqnarray}\label{eqn17}
F(x,\theta_i)-F(x',\theta_i)\geq{-\alpha_{i}(\theta_{i})}\varpi(\|x-x'\|)
\end{eqnarray}
for $0<\|x'-x_0\|<\delta_0$.

According to $\boldsymbol{\rm(H0)}$, for $\varepsilon$ given above, there exist constants $M(\frac{\varepsilon}{4})>0$ and $\tau(\frac{\varepsilon}{4})>0$ such that $\eqref{eqn12}$ holds.
Choose a sufficiently small positive constant $\delta<\min\{\delta_0,\delta_1\}$ such that $\varpi(\delta)\leq\frac{1}{M(\frac{\varepsilon}{4})}$. By the definition of modulus of continuity, we have
\begin{eqnarray}\label{eqn18}
\varpi(d)\leq\varpi(\delta)\leq\frac{1}{M(\frac{\varepsilon}{4})},
\end{eqnarray}
for $0<d<\delta$.
We construct a family of sequential balls $\mathfrak{B}_j=\{x:\|x-x_{j}\|<\delta\}$ with center at $x_j\in{\mathcal{X}}$ and radius $\delta$.
Through the finite covering theorem, ${\mathcal{X}}$ can be covered by only $K$ balls, by choosing appropriate $\{x_j\}$, where $j\in\{1,\cdots,K\}\triangleq{J}$.\par
Since for every $j\in{J}$,
$H_n(x)=H_n(x_j)+H_n(x)-H_n(x_j)$,
we obtain 
\begin{eqnarray}
\displaystyle\inf_{x\in{\mathcal{X}}}H_n(x)\geq\displaystyle\min_{j\in{J}}H_n(x_j)+\displaystyle\inf_{{j\in{J}},x\in{\mathfrak{B}_j}}\big(H_n(x)-H_n(x_j)\big).
\nonumber
\end{eqnarray}
Hence
\begin{eqnarray}\label{eqn19}
&{\rm{Prob}}\left\{\displaystyle\inf_{x\in{{\mathcal{X}}}}H_n(x)\leq{-\varepsilon}\right\}
\leq{\rm{Prob}}\left\{\displaystyle\min_{j\in{J}}H_n(x_j)\leq-\frac{\varepsilon}{2}\right\}~~~~~~~~~~~~~~~~~~~~~~~~~~~~~
\nonumber \\
&~~~~~~~~~~~~~~~~~~~~~~~~~~~~~~~+{\rm{Prob}}\left\{\displaystyle\inf_{{j\in{J}}, x\in{\mathfrak{B}_j}}\big(H_n(x)-H_n(x_j)\big)\leq-\frac{\varepsilon}{2}\right\}.
\end{eqnarray}
According to ($\ref{eqn13}$), we get
\begin{eqnarray}\label{eqn15}
{\rm{Prob}}\left\{\displaystyle\min_{j\in{J}}H_n(x_j)\leq{-\frac{\varepsilon}{2}}\right\}&\leq&\displaystyle\sum_{j=1}^{K}{\rm{Prob}}\left\{{H_n(x_{j})}\leq{-\frac{\varepsilon}{2}}\right\}
\nonumber\\
&\leq&\displaystyle\sum_{j=1}^{K}{e^{-n\mathcal{H}\left(\frac{\frac{\varepsilon}{2}b(x_j)+k(x_j)}{b^{2}(x_j)+k(x_j)}\big{|}\frac{k(x_j)}{b^{2}(x_j)+k(x_j)}\right)}} .
\end{eqnarray}
For any $j\in{J}$,
\begin{eqnarray}
H_n(x)-H_n(x_j)&=&\frac{1}{n}\displaystyle\sum_{i=1}^{n}[\Upsilon_i(x)-\Upsilon_i(x_j)]
\nonumber \\
&=&\frac{1}{n}\displaystyle\sum_{i=1}^{n}\left[\frac{F(x,\theta_{i})}{\psi_{i}(\theta_{i})}-\frac{F(x_j,\theta_{i})}{\psi_{i}(\theta_{i})}\right]+f(x_j)-f(x),
\nonumber
\end{eqnarray}
where $x\in{\mathfrak{B}_j}$ and $x\neq{x_j}$.
By $\eqref{eqn16}$, $f(x_j)-f(x)\geq-\frac{\varepsilon}{4}$. Furthermore, combining $\eqref{eqn17}$ and $\eqref{eqn18}$, we have
\begin{eqnarray}
H_n(x)-H_n(x_j)&\geq&-\frac{1}{n}\displaystyle\sum_{i=1}^{n}\frac{\alpha_i(\theta_{i})}{\psi_{i}(\theta_{i})}\varpi(\|x-x_j\|)-\frac{\varepsilon}{4}
\nonumber \\
&\geq&-\frac{A_n}{M(\frac{\varepsilon}{4})}-\frac{\varepsilon}{4}.
\nonumber
\end{eqnarray}
In particular, when $x=x_j$, $H_n(x)-H_n(x_j)=0\geq-\frac{A_n}{M(\frac{\varepsilon}{4})}-\frac{\varepsilon}{4}$.
Therefore,
\begin{eqnarray}\label{eqn20}
{\rm{Prob}}\left\{\displaystyle\inf_{{j\in{J}},x\in{\mathfrak{B}_j}}\big(H_n(x)-H_n(x_j)\big)\leq-\frac{\varepsilon}{2}\right\}&\leq&{\rm{Prob}}\left\{-\frac{A_n}{M(\frac{\varepsilon}{4})}\leq-\frac{\varepsilon}{4}\right\}
\nonumber \\
&\leq&{e^{-n\tau(\frac{\varepsilon}{4})}}.
\end{eqnarray}
Combining  $\eqref{eqn19}$,  $\eqref{eqn15}$ and  $\eqref{eqn20}$, we get
\begin{eqnarray}
{\rm{Prob}}\left\{\displaystyle\inf_{x\in{{\mathcal{X}}}}H_n(x)\leq{-\varepsilon}\right\}\leq
\displaystyle\sum_{j=1}^{K}{e^{-n\mathcal{H}\left(\frac{\frac{\varepsilon}{2}b(x_j)+k(x_j)}{b^{2}(x_j)+k(x_j)}\big{|}\frac{k(x_j)}{b^{2}(x_j)+k(x_j)}\right)}}+{e^{-n\tau(\frac{\varepsilon}{4})}}.
\nonumber
\end{eqnarray}

The proof of part (i) is complete, while part (ii) is handled analogously. Part (iii) is the combination of part (i) and part (ii).
\end{proof}

Together with the concentration inequality stated in Lemma $\ref{lem1}$, we provide the following sufficient conditions for the condition $\boldsymbol{\rm(H0)}$.
Unless otherwise specified, $A_n$ and $A_i(\theta_i)$ are defined as in Theorem $\ref{the1}$.
\begin{proposition}\label{pro2}
 For each ${i}\in{\mathbb{N}}$, let $\alpha_i(\theta_i)$ and $A_i(\theta_i)$ in Theorem $\ref{the1}$ satisfy the following conditions: \par
\noindent $\boldsymbol{\rm{(H1)}}$ ${\alpha_i(\theta_{i})}$ is integrable on $\Theta_{i}$;
\par
\noindent $\boldsymbol{\rm{(H2)}}$ $\frac{\alpha^{2}_i(\theta_{i})}{\psi_{i}(\theta_{i})}$ is integrable on $\Theta_{i}$;
\par
\noindent $\boldsymbol{\rm{(H3)}}$ $A_i(\theta_i)$ is uniformly upper bounded w.r.t $i$ and $\theta_{i}\in\Theta_{i}$. \par
\noindent Then, $\eqref{eqn12}$ holds.
\end{proposition}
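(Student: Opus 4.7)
The plan is to build a bounded martingale difference sequence out of the random variables $A_i(\theta_i)$, apply Lemma~\ref{lem1} to it, and then translate the resulting concentration bound into the form required by $\boldsymbol{\rm(H0)}$ via a suitable choice of $M(\varepsilon)$.

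First I would set $m_i := E[A_i(\theta_i)\mid\mathscr{G}_{i-1}]$ and $Y_i := A_i(\theta_i) - m_i$. Since the conditional distribution of $\theta_i$ given $\mathscr{G}_{i-1}$ has density $\psi_i$, a direct calculation gives
\begin{eqnarray}
m_i = \displaystyle\int_{\Theta_i}\frac{\alpha_i(\theta_i)}{\psi_i(\theta_i)}\psi_i(\theta_i)\,\mathrm{d}\theta_i = \displaystyle\int_{\Theta_i}\alpha_i(\theta_i)\,\mathrm{d}\theta_i,
\nonumber
\end{eqnarray}
which is well-defined and finite by $\boldsymbol{\rm(H1)}$, and exactly as in the proof of Lemma~\ref{lem2} this shows that $\{Y_i,\mathscr{G}_i\}$ is a martingale difference sequence. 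By $\boldsymbol{\rm(H3)}$ there exists $B>0$ such that $0\leq A_i(\theta_i)\leq B$ for all $i$ and $\theta_i\in\Theta_i$; since $\alpha_i\geq 0$ this also gives $0\leq m_i\leq B$. Consequently $Y_i\leq A_i\leq B$, so I may take the upper bound in Lemma~\ref{lem1} to be $b=B$. For the conditional second moment, a similar computation yields
\begin{eqnarray}
E[Y_i^2\mid\mathscr{G}_{i-1}]\leq E[A_i^2(\theta_i)\mid\mathscr{G}_{i-1}] = \displaystyle\int_{\Theta_i}\frac{\alpha_i^2(\theta_i)}{\psi_i(\theta_i)}\,\mathrm{d}\theta_i,
\nonumber
\end{eqnarray}
which is finite by $\boldsymbol{\rm(H2)}$; together with $\boldsymbol{\rm(H3)}$ it is bounded above by $B^2$, so I set $k=B^2$.

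With these bounds verified, Lemma~\ref{lem1} applies to $\{Y_i\}$, and for every $0<\delta<B$ it yields
\begin{eqnarray}
{\rm{Prob}}\left(\displaystyle\frac{1}{n}\displaystyle\sum_{i=1}^n Y_i\geq\delta\right)\leq e^{-n h(\delta)},\qquad h(\delta):=\mathcal{H}\!\left(\displaystyle\frac{B\delta+B^2}{B^2+B^2}\,\bigg|\,\displaystyle\frac{B^2}{B^2+B^2}\right),
\nonumber
\end{eqnarray}
and $h(\delta)>0$ by Remark~\ref{rem1}. To pass from this to $\boldsymbol{\rm(H0)}$, note that $A_n = \frac{1}{n}\sum Y_i + \frac{1}{n}\sum m_i$ with $\frac{1}{n}\sum m_i\leq B$. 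Hence, given $\varepsilon>0$, choosing $M(\varepsilon):=3B/(2\varepsilon)$ makes $\varepsilon M(\varepsilon)=3B/2$, so that
\begin{eqnarray}
\{A_n\geq\varepsilon M(\varepsilon)\}\subset\left\{\displaystyle\frac{1}{n}\displaystyle\sum_{i=1}^n Y_i\geq\displaystyle\frac{B}{2}\right\},
\nonumber
\end{eqnarray}
and setting $\tau(\varepsilon):=h(B/2)>0$ delivers $\eqref{eqn12}$.

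The only delicate point is ensuring simultaneously that the centering constant $m_i$ is well-defined (this is where $\boldsymbol{\rm(H1)}$ enters), that $Y_i$ admits a deterministic upper bound compatible with Lemma~\ref{lem1} (requiring $\boldsymbol{\rm(H3)}$), and that the residual mean term $\frac{1}{n}\sum m_i$ does not consume all of the ``budget'' $\varepsilon M(\varepsilon)$; choosing $M(\varepsilon)$ so that $\varepsilon M(\varepsilon)$ exceeds the uniform bound on $m_i$ by a fixed positive margin resolves this and yields the exponential estimate with $\tau(\varepsilon)$ independent of $n$.
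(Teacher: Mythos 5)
Your proposal is correct and follows essentially the same route as the paper: center $A_i(\theta_i)$ by its conditional mean to obtain a bounded martingale difference sequence, bound the conditional second moment via $\boldsymbol{\rm(H2)}$--$\boldsymbol{\rm(H3)}$, apply Lemma~\ref{lem1}, and choose $M(\varepsilon)$ so that $\varepsilon M(\varepsilon)$ exceeds the uniform bound on the conditional means by a fixed margin. Your explicit choice $b=B$, $k=B^2$, $\varepsilon M(\varepsilon)=3B/2$ is in fact a cleaner instantiation of the same argument.
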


\begin{proof}
This proof is similar to that of Proposition $\ref{pro1}$. For clarity, we show it briefly. According to $\boldsymbol{\rm{(H1)}}$, for each ${i}\in{\mathbb{N}}$,
\begin{eqnarray}
\displaystyle\int_{\Theta_i}{\alpha_i(\theta_{i})}{\mathrm{d}{\theta_{i}}}=
\displaystyle\int_{\Theta_i}{A_i(\theta_{i})}{\psi_{i}(\theta_{i})}{\mathrm{d}{\theta_{i}}}
\triangleq\overline{\alpha}_i>0.
\nonumber
\end{eqnarray}
Hence, $E[A_i(\theta_{i})\mid{\mathscr{G}_{i-1}}]=\overline{\alpha}_i$.\par
According to $\boldsymbol{\rm{(H3)}}$, there exists a positive constant $\hat{k}>0$ such that
\begin{eqnarray}
{\displaystyle\sup_{{i\in\mathbb{N}},{\theta_{i}\in\Theta_{i}}}}{A_{i}(\theta_{i})}\leq{\hat{k}}.
\nonumber
\end{eqnarray}
Therefore, $\overline{\alpha}_i\leq\hat{k}$.
Let $B_i(\theta_i)=A_i(\theta_i)-\overline{\alpha}_i$ and
$B_n=\frac{1}{n}\displaystyle\sum_{i=1}^{n}B_i(\theta_i)$.
Obviously, it's easy to get that $E[B_i(\theta_{i})\mid{\mathscr{G}_{i-1}}]=0$ and $B_i(\theta_{i})\leq{\hat{k}-\overline{\alpha}_i}$.
Furthermore, by $\boldsymbol{\rm{(H2)}}$,
\begin{eqnarray}
E[B^2_i(\theta_{i})\mid{\mathscr{G}_{i-1}}]&=&\displaystyle\int_{\Theta_i}\big({A_i(\theta_{i})-\overline{\alpha}}\big)^{2}{\psi_{i}(\theta_{i})}{\mathrm{d}{\theta_{i}}}
\nonumber \\
&=&\displaystyle\int_{\Theta_i}{\frac{\alpha^{2}_i(\theta_{i})}{\psi_{i}(\theta_{i})}}{\mathrm{d}{\theta_{i}}}-2\overline{\alpha}_i\displaystyle\int_{\Theta_i}{\alpha_{i}(\theta_{i})}{\mathrm{d}{\theta_{i}}}+{\overline{\alpha}_i}^2
\nonumber \\
&=&\displaystyle\int_{\Theta_i}{A_i(\theta_{i})}{\alpha_{i}(\theta_{i})}{\mathrm{d}{\theta_{i}}}-{\overline{\alpha}_i}^2
\nonumber \\
&\leq&\displaystyle\int_{\Theta_i}\hat{k}{\alpha_{i}(\theta_{i})}{\mathrm{d}{\theta_{i}}}-{\overline{\alpha}_i}^2
\nonumber \\
&=&{\overline{\alpha}_i}(\hat{k}-{\overline{\alpha}_i})
\nonumber \\
&\leq&\hat{k}^2\triangleq{\hat{b}}.
\nonumber
\end{eqnarray}
It follows from Lemma $\ref{lem1}$ that for any $0<\xi<\hat{b}$,
\begin{eqnarray}
{\rm{Prob}}\left(B_n\geq{\xi}\right)\leq{{e}^{-n\mathcal{H}\left(\frac{\hat{b}\xi+\hat{k}}{\hat{b}^{2}+\hat{k}}\big{|}\frac{\hat{k}}{\hat{b}^{2}+\hat{k}}\right)}}.
\nonumber
\end{eqnarray}
On the other hand,
\begin{eqnarray}
A_n=B_n+\frac{1}{n}\displaystyle\sum_{i=1}^{n}\overline{\alpha}_i\leq B_n+\hat{k}.
\nonumber
\end{eqnarray}
Therefore, ${\rm{Prob}}\left(A_n\geq{\xi+\hat{k}}\right)\leq{\rm{Prob}}\left(B_n\geq{\xi}\right)$.
In fact, for any $\epsilon>0$, there exists $M(\epsilon)>0$ such that ${\epsilon}M(\epsilon)=\xi+{\hat{k}}$.
Hence
\begin{eqnarray}
{\rm{Prob}}\left(A_n\geq{\epsilon}M(\epsilon)\right)\leq{\rm{Prob}}\left(B_n\geq{{\epsilon}M(\epsilon)-\hat{k}}\right)\leq{{e}^{-n\mathcal{H}\left(\frac{\hat{b}\left({\epsilon}M(\epsilon)-{\hat{k}}\right)+\hat{k}}{\hat{b}^{2}+\hat{k}}\big{|}\frac{\hat{k}}{\hat{b}^{2}+\hat{k}}\right)}}.
\nonumber
\end{eqnarray}
The proof is complete.
\end{proof}

\section{Asymptotics of the Optimal Value for SAA with AMIS}
\label{sec4}

\subsection{Functional CLT for Martingale Difference Sequences}
\label{sec4.1}
In order to prove the asymptotics of the optimal value for SAA with AMIS, we need to use a functional CLT for martingale difference sequences.
It is actually an application of Theorem $\ref{pro4}$, as shown in the following.

\begin{theorem}\label{cor1}
Let $\{X_i, \mathscr{F}_i\}_{i=1}^{\infty}$ be a martingale difference sequence of the space $C(\mathcal{S})$ (defined in \cref{sec2.4}).
Suppose that the following assumptions hold.\par

\noindent $\boldsymbol{\rm(A1)}$
There exists a real nonnegative random sequence $\{M_i\}$ on $(\Omega,\mathscr{F},P)$ and a function $\beta:\mathcal{S}\rightarrow\mathbb{R}\backslash\{0\}$ such that for any $s_1,s_2\in\mathcal{S}$ and all $i\in\mathbb{N}$,
\begin{eqnarray}\label{eqn21}
|Y_i(s_1)-Y_i(s_2)|\leq{M_i}~~{\rm{a.s.}},
\end{eqnarray}
where $Y_i(s):=|\beta(s)X_i(s)|$ and $\displaystyle\sup_{s\in\mathcal{S}}|\beta^{-1}(s)|<\infty$.

\noindent $\boldsymbol{\rm(A2)}$ $\frac{1}{n}\displaystyle\sum_{i=1}^{n}E[M^2_i\mid{\mathscr{F}_{i-1}}]\stackrel{P}{\longrightarrow}0$.

\noindent $\boldsymbol{\rm(A3)}$
For any $i\in\mathbb{N}$, there exists a constant $b>0$ such that $E[M^2_i\mid{\mathscr{F}_{i-1}}]\leq b$ a.s.

\noindent $\boldsymbol{\rm(A4)}$
For some $s_0\in\mathcal{S}$ and all $i\in\mathbb{N}$,
there exists a constant $k>0$ such that $|X_i(s_0)|\leq k$ a.s.

\noindent $\boldsymbol{\rm(A5)}$
$\frac{1}{n}\displaystyle\sum_{i=1}^{n}E[X^2_i(s_0)\mid{\mathscr{F}_{i-1}}]\stackrel{P}{\longrightarrow}c$, where $c$ is a positive constant.

\noindent $\boldsymbol{\rm(A6)}$
There exists a real nonnegative random sequence $\{\varsigma_i\}$ on $(\Omega,\mathscr{F},P)$ and a continuous distance $\rho$ with $\displaystyle\int_{0}^{1}{H^{\frac{1}{2}}(\mathcal{S},\rho,r)}{\mathrm{d}{r}}<\infty$ such that
$\displaystyle\sup_{n\in\mathbb{N}}\frac{1}{n}\displaystyle\sum_{i=1}^{n}E[\varsigma^2_i]<\infty$
and for any $i\in\mathbb{N}$,
\begin{eqnarray}\label{eqn46}
|X_i(s_1)-X_i(s_2)|\leq\varsigma_i{\rho(s_1,s_2)}~~{\rm{a.s.}}
\end{eqnarray}

\noindent Then there exists a Gaussian measure $\mu$ on $C(\mathcal{S})$ such that
\begin{eqnarray}\label{eqn47}
\frac{1}{\sqrt{n}}\displaystyle\sum_{i=1}^{n}X_{i}\stackrel{\mathcal{D}}{\longrightarrow}\mu  ~~~~~~{\rm{as}}~~ n\rightarrow\infty.
\end{eqnarray}
\end{theorem}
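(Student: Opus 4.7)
The plan is to recast the statement as a row-sum over a martingale difference array and invoke Theorem \ref{pro4}. Set $J_n := n$, $\mathscr{F}_{nj} := \mathscr{F}_j$, and $Z_{nj} := \frac{1}{\sqrt{n}} X_j$ for $1\le j\le n$. Since $\{X_i,\mathscr{F}_i\}$ is a martingale difference sequence in $C(\mathcal{S})$, $\{Z_{nj},\mathscr{F}_{nj}\}$ is a martingale difference array in $C(\mathcal{S})$ and $\sum_{j=1}^n Z_{nj} = \frac{1}{\sqrt{n}}\sum_{i=1}^n X_i$, so \eqref{eqn47} reduces to verifying conditions $\boldsymbol{\rm(C1)}$--$\boldsymbol{\rm(C3)}$ of Theorem \ref{pro4}.

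The delicate step is $\boldsymbol{\rm(C1)}$, since the pointwise variance convergence (A5) is supplied only at the distinguished point $s_0$. I would transfer it to arbitrary $s\in\mathcal{S}$ via the identity $Y_i^2(s)-Y_i^2(s_0) = (Y_i(s)-Y_i(s_0))(Y_i(s)+Y_i(s_0))$, where $Y_i(s) := |\beta(s)X_i(s)|$. Assumption (A1) together with $Y_i(s_0)\le |\beta(s_0)|k$ from (A4) gives $|Y_i^2(s)-Y_i^2(s_0)|\le M_i(2|\beta(s_0)|k + M_i)$, so that
\begin{equation*}
\biggl|X_i^2(s) - \frac{\beta^2(s_0)}{\beta^2(s)}X_i^2(s_0)\biggr| \le \frac{1}{\beta^2(s)}\bigl(2|\beta(s_0)|k\, M_i + M_i^2\bigr)\quad\text{a.s.}
\end{equation*}
Taking $E[\,\cdot\mid\mathscr{F}_{i-1}]$ and averaging over $i$, the $M_i^2$ contribution vanishes in probability by (A2), while the $M_i$ contribution vanishes by the Cauchy--Schwarz bound $\bigl(\tfrac{1}{n}\sum_i E[M_i\mid\mathscr{F}_{i-1}]\bigr)^2 \le \tfrac{1}{n}\sum_i E[M_i^2\mid\mathscr{F}_{i-1}]$ combined with (A2). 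Together with (A5) this yields
\begin{equation*}
\sum_{j=1}^n E\bigl[Z_{nj}^2(s)\mid\mathscr{F}_{nj-1}\bigr] \stackrel{P}{\longrightarrow}\varphi(s):=\frac{\beta^2(s_0)}{\beta^2(s)}\,c.
\end{equation*}

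For $\boldsymbol{\rm(C2)}$, the envelope $|X_i(s)|\le|\beta^{-1}(s)|(|\beta(s_0)|k + M_i)$ arising from (A1) and (A4) implies $\{|Z_{nj}(s)|>\epsilon\}\subset\{M_j>T_n\}$ for the deterministic threshold $T_n:=\epsilon\sqrt{n}|\beta(s)| - |\beta(s_0)|k\to\infty$. Inserting $X_i^2(s)\le 2\beta^{-2}(s)(|\beta(s_0)|^2 k^2 + M_i^2)$ and applying Markov's inequality $P(M_j>T_n\mid\mathscr{F}_{j-1})\le E[M_j^2\mid\mathscr{F}_{j-1}]/T_n^2$ together with (A2)--(A3), both resulting pieces of $\sum_j E[Z_{nj}^2(s)\mathbf{1}_{\{|Z_{nj}(s)|>\epsilon\}}\mid\mathscr{F}_{j-1}]$ vanish in probability. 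For $\boldsymbol{\rm(C3)}$, (A6) directly supplies the entropy condition on $\rho$ and the Lipschitz bound $q_\rho(X_i)\le\varsigma_i$. Controlling the sup-norm part by $\sup_{s\in\mathcal{S}}|X_i(s)|\le B(|\beta(s_0)|k+M_i)$ with $B:=\sup_{s\in\mathcal{S}}|\beta^{-1}(s)|<\infty$, one obtains
\begin{align*}
\sup_n\sum_{j=1}^n E\bigl[\|Z_{nj}\|_{C_\rho}^2\bigr] &= \sup_n \tfrac{1}{n}\sum_{i=1}^n E\bigl[\|X_i\|_{C_\rho}^2\bigr] \\
&\le 2B^2|\beta(s_0)|^2 k^2 + 2B^2 b + \sup_n \tfrac{1}{n}\sum_{i=1}^n E[\varsigma_i^2] <\infty
\end{align*}
after using $E[M_i^2]\le b$ from (A3) and the variance budget in (A6).

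I expect $\boldsymbol{\rm(C1)}$ to be the main obstacle: (A5) is genuinely pointwise at a single $s_0$, and the entire purpose of the quantitative envelope (A1)--(A4) is to promote this single-point information into a valid quadratic characteristic $\varphi$ on all of $\mathcal{S}$. Once $\varphi$ is identified, the remaining two conditions are routine envelope-plus-Markov computations. With all three verified, Theorem \ref{pro4} then delivers a Gaussian measure $\mu$ on $C(\mathcal{S})$ with $\frac{1}{\sqrt{n}}\sum_{i=1}^n X_i \stackrel{\mathcal{D}}{\longrightarrow}\mu$, which is precisely \eqref{eqn47}.
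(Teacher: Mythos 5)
Your proposal is correct and follows essentially the same route as the paper: rescale to the array $Z_{nj}=X_j/\sqrt{n}$ and verify $\boldsymbol{\rm(C1)}$--$\boldsymbol{\rm(C3)}$ of Theorem \ref{pro4}, transferring the single-point variance limit at $s_0$ to all of $\mathcal{S}$ via the envelope in $\boldsymbol{\rm(A1)}$ and $\boldsymbol{\rm(A4)}$. The only (immaterial) difference is that you bound $Y_i(s)+Y_i(s_0)$ almost surely by $2|\beta(s_0)|k+M_i$ before conditioning, whereas the paper applies the conditional Cauchy--Schwarz inequality to $E[M_i|Y_i(s)|\mid\mathscr{F}_{i-1}]$; both reduce $\boldsymbol{\rm(C1)}$ to $\tfrac{1}{n}\sum_i E[M_i^2\mid\mathscr{F}_{i-1}]\stackrel{P}{\to}0$ and yield the same $\varphi(s)=c\,\beta^2(s_0)/\beta^2(s)$.
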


\begin{proof}
Let $Z_{nj}=\frac{X_{j}}{\sqrt{n}}$ and $\mathscr{F}_{nj}=\mathscr{F}_j$, where $j=1,...,n$ and ${n\in\mathbb{N}}$.
Then $\{Z_{nj},{\mathscr{F}_{nj}}\}$ is a martingale difference array of the space $C(\mathcal{S})$.
In the following, the conditions $\boldsymbol{\rm(C1)}$ -$\boldsymbol{\rm(C3)}$ in Theorem $\ref{pro4}$ remain to be verified in turn.\par
\noindent $\boldsymbol{\rm{Step1:}}$
We first show that $E[X^2_j(s)\mid\mathscr{F}_{j-1}]$ is well defined a.s. for any $s\in\mathcal{S}$ and every ${j\in\mathbb{N}}$.
According to $\eqref{eqn21}$, for any given $s_1\in\mathcal{S}$ and $s_1\neq{s_0}$, we have
\begin{eqnarray}
Y_j^2(s_1)\leq\big({M_j}+|Y_j(s_0)|\big)^2
\leq2\bigg[{M^2_j}+Y_j^2(s_0)\bigg]~~~a.s.
\nonumber
\end{eqnarray}
By $\boldsymbol{\rm(A4)}$,
for every ${j\in\mathbb{N}}$, we get
$E[X^2_j(s_0)\mid\mathscr{F}_{j-1}]\leq k^2$.
Thus, together with $\boldsymbol{\rm(A3)}$, 
\begin{eqnarray}
E[X^2_j(s_1)\mid\mathscr{F}_{j-1}]
&\leq&2\bigg({\textstyle\left(\frac{1}{\beta(s_1)}\right)^2}E[M^2_j\mid{\mathscr{F}_{j-1}}]+{\textstyle\left(\frac{\beta(s_0)}{\beta(s_1)}\right)^2}E[X^2_j(s_0)\mid{\mathscr{F}_{j-1}}]\bigg)
\nonumber \\
&\leq& 2\bigg({\textstyle\left(\frac{1}{\beta(s_1)}\right)^2}b+{\textstyle\left(\frac{\beta(s_0)}{\beta(s_1)}\right)^2}k^2\bigg)
\nonumber \\
&<& \infty ~~~~a.s.
\nonumber
\end{eqnarray}
Then, by the arbitrariness of $s_1$, $E[X^2_j(s)\mid\mathscr{F}_{j-1}]$ is well defined for all $s\in\mathcal{S}$ and all ${j\in\mathbb{N}}$.
Therefore, for every $s\in\mathcal{S}$ and ${j\in\mathbb{N}}$, $E[X^2_j(s)]$ is also well defined and $E[X^2_j(s)\mid\mathscr{F}_{j-1}]\leq 2\bigg({\textstyle\left(\frac{1}{\beta(s)}\right)^2}b+{\textstyle\left(\frac{\beta(s_0)}{\beta(s)}\right)^2}k^2\bigg)$ a.s.
By the Cauchy-Schwarz inequality, 
\begin{eqnarray}
E[M_j|X_j(s)|\mid\mathscr{F}_{j-1}]\leq\left(E[M^2_j\mid\mathscr{F}_{j-1}] E[X^2_j(s)\mid\mathscr{F}_{j-1}]\right)^\frac{1}{2}
<\infty~~~~~~~~a.s.
\nonumber
\end{eqnarray}
for any $s\in\mathcal{S}$ and ${j\in\mathbb{N}}$. It follows from $\eqref{eqn21}$ that
\begin{eqnarray}
|Y_j^2(s_1)-Y_j^2(s_0)|\leq{M_j}|Y_j(s_1)+Y_j(s_0)|
\leq{M_j}\left(|Y_j(s_1)|+|Y_j(s_0)|\right)~~~~a.s.
\nonumber
\end{eqnarray}
Therefore, for any $s\in\mathcal{S}$,
\begin{eqnarray}\label{eqn34}
~~&&\bigg|E[Y_j^2(s)\mid\mathscr{F}_{j-1}]-E[Y_j^2(s_0)\mid\mathscr{F}_{j-1}]\bigg|
\nonumber \\
&=&\bigg|E[Y_j^2(s)-Y_j^2(s_0)\mid\mathscr{F}_{j-1}]\bigg|
\nonumber \\
&\leq&E\bigg[\big|Y_j^2(s)-Y_j^2(s_0)\big|\mid\mathscr{F}_{j-1}\bigg]
\nonumber \\
&\leq&E\bigg[{M_j}\left(\big|Y_j(s)\big|+\big|Y_j(s_0)\big|\right)\mid\mathscr{F}_{j-1}\bigg]
\nonumber \\
&=&\bigg(E\big[{M_j}\big|Y_j(s)\big|\mid\mathscr{F}_{j-1}\big]+E\big[{M_j}\big|Y_j(s_0)\big|\mid\mathscr{F}_{j-1}\big]\bigg)
\nonumber \\
&\leq&\big[\left(E[M^2_j\mid\mathscr{F}_{j-1}]E[Y^2_j(s)\mid\mathscr{F}_{j-1}]\right)^\frac{1}{2}+\left(E[M^2_j\mid\mathscr{F}_{j-1}] E[Y^2_j(s_0)\mid\mathscr{F}_{j-1}]\right)^\frac{1}{2}\big]
\nonumber \\
&=&\big(E[M^2_j\mid\mathscr{F}_{j-1}]\big)^\frac{1}{2}\bigg[(E[Y^2_j(s)\mid\mathscr{F}_{j-1}])^\frac{1}{2}+(E[Y^2_j(s_0)\mid\mathscr{F}_{j-1}])^\frac{1}{2}\bigg]
~~~~~~a.s.
\end{eqnarray}

To simplify notation, hereafter we denote
\begin{eqnarray}
&&m_j:=E[M^2_j\mid\mathscr{F}_{j-1}], \quad
w_j(s):=E[X^2_j(s)\mid\mathscr{F}_{j-1}], \quad
W_n(s):=\frac{1}{n}\displaystyle\sum_{j=1}^{n}w_j(s), \quad
\nonumber \\
&&{\rm{and}}~~V_n(s):=E[W_n(s)]=\frac{1}{n}\displaystyle\sum_{j=1}^{n}E[w_j(s)]=\frac{1}{n}\displaystyle\sum_{j=1}^{n}E[X^2_j(s)]
.
\nonumber
\end{eqnarray}
Therefore, $0\leq w_j(s)\leq 2\bigg({\textstyle\left(\frac{1}{\beta(s)}\right)^2}b+{\textstyle\left(\frac{\beta(s_0)}{\beta(s)}\right)^2}k^2\bigg)$ a.s. and $0\leq m_j\leq b$ a.s.
Furthermore, $\eqref{eqn34}$ can be rewritten as
\begin{eqnarray}\label{eqn40}
\left|w_j(s)-{\textstyle\left(\frac{\beta(s_0)}{\beta(s)}\right)^2}w_j(s_0)\right|
&\leq&{\textstyle\frac{1 }{\beta^2(s)}} m_j^\frac{1}{2}\big(w_j(s)^\frac{1}{2}+w_j(s_0)^\frac{1}{2}\big)
\nonumber \\
&\leq&{\textstyle\frac{2 }{\beta^2(s)}}m_j^\frac{1}{2}\left[2\bigg({\textstyle\left(\frac{1}{\beta(s)}\right)^2}b+{\textstyle\left(\frac{\beta(s_0)}{\beta(s)}\right)^2}k^2\bigg)\right]^\frac{1}{2}~~a.s.
\end{eqnarray}
Thus, the condition $\boldsymbol{\rm(C1)}$ is equivalent to the fact that for each $s\in\mathcal{S}$, $W_n(s)$ converges in probability to a positive number as $n$ goes to infinity.
Obviously,
\begin{eqnarray}\label{eqn35}
&&\left|E[w_{j}(s)]-{\textstyle\left(\frac{\beta(s_0)}{\beta(s)}\right)^2}E[w_{j}(s_0)]\right|
\nonumber \\
&\leq& E\left[\big|w_j(s)-{\textstyle\left(\frac{\beta(s_0)}{\beta(s)}\right)^2}w_j(s_0)\big|\right]
\nonumber \\
&\leq&{\textstyle\frac{2 }{\beta^2(s)}}\left[2\bigg({\textstyle\left(\frac{1}{\beta(s)}\right)^2}b+{\textstyle\left(\frac{\beta(s_0)}{\beta(s)}\right)^2}k^2\bigg)\right]^\frac{1}{2}E\big[m_j^\frac{1}{2}\big]
\nonumber \\
&\leq&{\textstyle\frac{2 }{\beta^2(s)}}\left[2\bigg({\textstyle\left(\frac{1}{\beta(s)}\right)^2}b+{\textstyle\left(\frac{\beta(s_0)}{\beta(s)}\right)^2}k^2\bigg)\right]^\frac{1}{2}\big(E[m_j]\big)^\frac{1}{2},~~~~\forall{j\in\mathbb{N}}.
\end{eqnarray}

According to $\boldsymbol{\rm(A5)}$,
we have
\begin{eqnarray}\label{eqn45}
W_n(s_0)\stackrel{P}{\longrightarrow}c.
\end{eqnarray}
On the other hand, by $\boldsymbol{\rm(A4)}$, it is obvious that $W_n(s_0)\leq k^2$ a.s.
Therefore,
\begin{eqnarray}\label{eqn42}
\displaystyle\lim_{n\rightarrow\infty}V_n(s_0)=c>0.
\end{eqnarray}
Similarly, it follows from $\boldsymbol{\rm(A2)}$-$\boldsymbol{\rm(A3)}$ that
$\frac{1}{n}\displaystyle\sum_{j=1}^{n}m_j\stackrel{P}{\longrightarrow}0$
and $\frac{1}{n}\displaystyle\sum_{j=1}^{n}m_j\leq b$ a.s.
Hence
\begin{eqnarray}\label{eqn61}
\displaystyle\lim_{n\rightarrow\infty}\frac{1}{n}\displaystyle\sum_{j=1}^{n}E[m_j]=\displaystyle\lim_{n\rightarrow\infty}\frac{1}{n}\displaystyle\sum_{j=1}^{n}E[M^2_j]=0.
\end{eqnarray}
It is not hard to verify that
\begin{eqnarray}
\frac{1}{n}\displaystyle\sum_{j=1}^{n}m^\frac{1}{2}_j\stackrel{P}{\longrightarrow}0,~~~~~~~~~~~~~~~~~~~~~~~~~~~~~~~~~~\label{eqn44}
\\
{\rm{and}}~~~~~~~~~~~~~~~~~~~~~~~\displaystyle\lim_{n\rightarrow\infty}\frac{1}{n}\displaystyle\sum_{j=1}^{n}(E[m_j])^\frac{1}{2}=0.~~~~~~~~~~~~~~~~~~~~~~~~\label{eqn39}
\end{eqnarray}
Thus, it follows from $\eqref{eqn35}$ and $\eqref{eqn39}$ that for any $s\in\mathcal{S}$,
\begin{eqnarray}\label{eqn43}
\displaystyle\lim_{n\rightarrow\infty}\left|V_n(s)-{\textstyle\left(\frac{\beta(s_0)}{\beta(s)}\right)^2}V_n(s_0)\right|=0.
\end{eqnarray}
Therefore, combining $\eqref{eqn42}$ and $\eqref{eqn43}$, we get
\begin{eqnarray}
\displaystyle\lim_{n\rightarrow\infty}V_n(s)&=&\displaystyle\lim_{n\rightarrow\infty}\left[V_n(s)-{\textstyle\left(\frac{\beta(s_0)}{\beta(s)}\right)^2}V_n(s_0)\right]+\displaystyle\lim_{n\rightarrow\infty}\left[{\textstyle\left(\frac{\beta(s_0)}{\beta(s)}\right)^2}V_n(s_0)\right]
 \nonumber \\
&=&c{\textstyle\left(\frac{\beta(s_0)}{\beta(s)}\right)^2}>0.
 \nonumber
\end{eqnarray}

Let $\varphi(s)=\displaystyle\lim_{n\rightarrow\infty}V_n(s)=c{\textstyle\left(\frac{\beta(s_0)}{\beta(s)}\right)^2}$.
Hence $\varphi: \mathcal{S}\rightarrow\mathbb{R}_+$.
According to $\eqref{eqn40}$ and $\eqref{eqn44}$, for every $s\in\mathcal{S}$,
\begin{eqnarray}
\left|W_n(s)-\varphi(s)\right|&\leq&\left|W_n(s)-{\textstyle\left(\frac{\beta(s_0)}{\beta(s)}\right)^2}W_n(s_0)\right|+ \left|{\textstyle\left(\frac{\beta(s_0)}{\beta(s)}\right)^2}W_n(s_0)-\varphi(s)\right|
 \nonumber \\
&\leq&{\textstyle\frac{2 }{\beta^2(s)}}\left[2\bigg({\textstyle\left(\frac{1}{\beta(s)}\right)^2}b+{\textstyle\left(\frac{\beta(s_0)}{\beta(s)}\right)^2}k^2\bigg)\right]^\frac{1}{2}\left[\frac{1}{n}\displaystyle\sum_{j=1}^{n}m^\frac{1}{2}_j\right]
 \nonumber \\
&&+{\textstyle\left(\frac{\beta(s_0)}{\beta(s)}\right)^2}\left|W_n(s_0)-c\right|.
 \nonumber
\end{eqnarray}
Therefore, it follows from $\eqref{eqn45}$ and $\eqref{eqn44}$ that for a given $s\in\mathcal{S}$,
\begin{eqnarray}
&&~~~{\rm{Prob}}\left(|W_n(s)-\varphi(s)|\geq{\epsilon}\right)
\nonumber \\
&&\leq{\rm{Prob}}\left({\textstyle\frac{2 }{\beta^2(s)}}\left[2\bigg({\textstyle\left(\frac{1}{\beta(s)}\right)^2}b+{\textstyle\left(\frac{\beta(s_0)}{\beta(s)}\right)^2}k^2\bigg)\right]^\frac{1}{2}\left[\frac{1}{n}\displaystyle\sum_{j=1}^{n}m^\frac{1}{2}_j\right]\geq\frac{\epsilon}{2}\right)
\nonumber \\
&&~~+{\rm{Prob}}\left({\textstyle\left(\frac{\beta(s_0)}{\beta(s)}\right)^2}\left|W_n(s_0)-c\right|\geq\frac{\epsilon}{2}\right)
\nonumber \\
&&~~\rightarrow0~~~~~~~~~~~~~~~~~~~~~~~~~~~~~~~~~~~~~~~~~~~~~~~~~~~{\rm{as}}~~n\rightarrow\infty.~
\nonumber
\end{eqnarray}
Thus, the condition $\boldsymbol{\rm(C1)}$ is proved.\par
\noindent $\boldsymbol{\rm{Step2:}}$
Since $Z_{nj}=\frac{X_{j}}{\sqrt{n}}$, we can transform the condition $\boldsymbol{\rm(C2)}$ into the following form:
\begin{eqnarray}\label{eqn62}
\displaystyle\lim_{n\rightarrow\infty}{\rm{Prob}}\left\{\displaystyle\sum_{j=1}^{n}E\left[\frac{1}{n}X^2_{j}(s)I_{\left(|X_{j}(s)|>\epsilon\sqrt{n}\right)}\mid{\mathscr{F}_{j-1}}\right]\geq\epsilon\right\}=0
\end{eqnarray}
for any $\epsilon>0$ and all $s\in\mathcal{S}$.
It follows from $\eqref{eqn21}$ that for any $s\in\mathcal{S}$
\begin{eqnarray}
|X_{j}(s)|\leq \frac{1}{|\beta(s)|}M_j+\left|\frac{\beta(s_0)}{\beta(s)}\right||X_{j}(s_0)|.
\nonumber
\end{eqnarray}
According to $\boldsymbol{\rm(A4)}$, it's easy to verify that for every $s\in\mathcal{S}$ and any $\epsilon>0$,
\begin{eqnarray}
&&~~~~{\rm{Prob}}\left\{\frac{1}{n}\displaystyle\sum_{j=1}^{n}E\left[X^2_{j}(s)I_{\left(|X_{j}(s)|>\epsilon\sqrt{n}\right)}\mid{\mathscr{F}_{j-1}}\right]\geq\epsilon\right\}
\nonumber \\
&&\leq\displaystyle\sum_{j=1}^{n}{\rm{Prob}}\left\{\left(|X_{j}(s)|>\epsilon\sqrt{n}\right)\right\}
\nonumber \\
&&\leq \displaystyle\sum_{j=1}^{n}{\rm{Prob}}\left\{\left(\frac{1}{|\beta(s)|}M_j+\left|\frac{\beta(s_0)}{\beta(s)}\right||X_{j}(s_0)|>\epsilon\sqrt{n}\right)\right\}
\nonumber \\
&&\leq \displaystyle\sum_{j=1}^{n}{\rm{Prob}}\left\{M_j>\epsilon\sqrt{n}|\beta(s)|-\left|{\beta(s_0)}\right|k\right\}.
\nonumber
\end{eqnarray}
Clearly, if $s\in\mathcal{S}$ and $\epsilon>0$ are fixed, then there must exist $N\in\mathbb{N}$ such that for any $n\geq N$,
$\epsilon\sqrt{n}|\beta(s)|>\left|{\beta(s_0)}\right|k$.
Thus, for $n\geq N$, we have
\begin{eqnarray}
\displaystyle\sum_{j=1}^{n}{\rm{Prob}}\left\{M_j>\epsilon\sqrt{n}|\beta(s)|-\left|{\beta(s_0)}\right|k\right\}
&\leq&\displaystyle\sum_{j=1}^{n} \frac{E[M^2_j]}{(\epsilon\sqrt{n}|\beta(s)|-\left|{\beta(s_0)}\right|k)^2}
\nonumber \\
&=&\frac{n}{(\epsilon\sqrt{n}|\beta(s)|-\left|{\beta(s_0)}\right|k)^2}\displaystyle\sum_{j=1}^{n}\frac{E[M^2_j]}{n}.
\nonumber 
\end{eqnarray}
Together with $\eqref{eqn61}$, $\eqref{eqn62}$ is proved.

\noindent $\boldsymbol{\rm{Step3:}}$
It remains to prove the condition $\boldsymbol{\rm(C3)}$, which is equivalent to the following form:
\begin{eqnarray}
\displaystyle\sup_{n\in\mathbb{N}}\frac{1}{n}\displaystyle\sum_{j=1}^{n}E[\|X_{j}\|^2_{C_\rho}]<\infty.
\nonumber
\end{eqnarray}

According to $\eqref{eqn46}$, for any $j\in\mathbb{N}$, we get
\begin{eqnarray}
~~~~~~~~~~~~~~~~~q_\rho(X_j)=\displaystyle\sup_{s_1,s_2\in\mathcal{S}; s_1\neq{s_2}}\frac{|X_j(s_1)-X_j(s_2)|}{\rho(s_1,s_2)}
\leq\varsigma_j~~~~~~~~~~~~~~~~~~~~~{\rm{a.s.}}
\nonumber
\end{eqnarray}
Obviously, for any $j\in\mathbb{N}$,
\begin{eqnarray}
~~~~~~~~~~~~~~~~~~\|X_{j}\|^2_{C_\rho}&=&\left(\max\left\{\displaystyle\sup_{s\in\mathcal{S}}|X_j(s)|, q_\rho(X_{j})\right\}\right)^2
\nonumber \\
&\leq&\left(\displaystyle\sup_{s\in\mathcal{S}}|X_j(s)|+q_\rho(X_{j})\right)^2
\nonumber \\
&\leq&\left(\displaystyle\sup_{s\in\mathcal{S}}|X_j(s)|+\varsigma_j\right)^2
\nonumber \\
&\leq&2\left(\displaystyle\sup_{s\in\mathcal{S}}|X_j(s)|\right)^2+2\varsigma_j^2~~~~~~~~~~~~~~~~~~~~~~~~~{\rm{a.s.}}
\nonumber
\end{eqnarray}
To simplify notation, we denote $\bar{\beta}:=\displaystyle\sup_{s\in\mathcal{S}}|\beta^{-1}(s)|$.
It follows from $\eqref{eqn21}$ that for a given $s\in\mathcal{S}$ and any $j\in\mathbb{N}$,
\begin{eqnarray}
~~~~~~~~~~~~~~~~~~~~~\displaystyle\sup_{s\in\mathcal{S}}|X_j(s)|\leq\bar{\beta}[|\beta(s_0)X_j(s_0)|+M_j]~~~~~~~~~~~~~~~~~~~~~~~~~~~~~{\rm{a.s.}}
\nonumber
\end{eqnarray}
Therefore, for every $j\in\mathbb{N}$,
\begin{eqnarray}
~~~~~~~~~~~~\|X_{j}\|^2_{C_\rho}
&\leq&2\left(\bar{\beta}[|\beta(s_0)X_j(s_0)|+M_j]\right)^2+2\varsigma_j^2
\nonumber \\
&\leq&4\bar{\beta}^2[\beta^2(s_0)X^2_j(s_0)+M_j^2]+2\varsigma_j^2~~~~~~~~~~~~~~~~~~~~~{\rm{a.s.}}
\nonumber
\end{eqnarray}
According to $\boldsymbol{\rm(A3)}$-$\boldsymbol{\rm(A4)}$, we have
\begin{eqnarray}
~~~~~~~~E[M^2_j]=E\left[E[M^2_j\mid{\mathscr{F}_{j-1}}]\right]\leq{b}
~~~~{\rm{a.s.}}~~~~{\rm{and}}~~~~E[X^2_j(s_0)]=\leq k^2 ~~~~{\rm{a.s.}}
\nonumber
\end{eqnarray}
for any $j\in\mathbb{N}$.
Furthermore, we get
\begin{eqnarray}
E[\|X_{j}\|^2_{C_\rho}]&\leq&E\left[4\bar{\beta}^2[\beta^2(s_0)X^2_j(s_0)+M_j^2]+2\varsigma_j^2\right]
\nonumber \\
&\leq&4\bar{\beta}^2(k^2\beta^2(s_0)+b)+2E\left[\varsigma_j^2\right].
\nonumber
\end{eqnarray}
Then, it follows from $\boldsymbol{\rm(A6)}$ that
\begin{eqnarray}
\displaystyle\sup_{n\in\mathbb{N}}\frac{1}{n}\displaystyle\sum_{j=1}^{n}E[\|X_{j}\|^2_{C_\rho}]
\leq4\bar{\beta}^2(k^2\beta^2(s_0)+b)+2\displaystyle\sup_{n\in\mathbb{N}}\frac{1}{n}\displaystyle\sum_{j=1}^{n}E\left[\varsigma_j^2\right]<\infty.
\nonumber
\end{eqnarray}
Thus, applying Theorem $\ref{pro4}$, the proof is complete.
\end{proof}

In the following, we show two results that are extensions of Theorem $\ref{cor1}$.
\begin{corollary}\label{cor4}
Let $\{X_i, \mathscr{F}_i\}_{i=1}^{\infty}$ be a martingale difference sequence on $C(\mathcal{S})$. Suppose that $\boldsymbol{\rm(A1)}$ and $\boldsymbol{\rm(A3)}$-$\boldsymbol{\rm(A6)}$ in Theorem $\ref{cor1}$ are satisfied. Assume that

\noindent $\boldsymbol{\rm(A2')}$ $\displaystyle\lim_{n\rightarrow\infty}\frac{1}{n}\displaystyle\sum_{i=1}^{n}E[M^2_i]=0$.

\noindent Then, $\eqref{eqn47}$ holds.
\end{corollary}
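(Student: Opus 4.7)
The plan is to reduce Corollary \ref{cor4} to Theorem \ref{cor1} by showing that assumption $\boldsymbol{\rm(A2')}$, together with the other hypotheses, implies the original assumption $\boldsymbol{\rm(A2)}$. Once this is established, the conclusion \eqref{eqn47} follows immediately from Theorem \ref{cor1}. The key observation is that both $\boldsymbol{\rm(A2)}$ and $\boldsymbol{\rm(A2')}$ concern averages of conditional and unconditional second moments of the same sequence $\{M_i\}$, and the former is a stochastic (in-probability) statement while the latter is deterministic (in mean). The only bridge needed is Markov's inequality, combined with the tower property of conditional expectation.

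First I would record that $M_i \geq 0$ by the nonnegativity assumption in $\boldsymbol{\rm(A1)}$, so $E[M_i^2 \mid \mathscr{F}_{i-1}] \geq 0$ almost surely, and the average $T_n := \frac{1}{n}\sum_{i=1}^{n} E[M_i^2 \mid \mathscr{F}_{i-1}]$ is itself a nonnegative random variable. Taking unconditional expectations and applying the tower property gives
\begin{equation*}
E[T_n] \;=\; \frac{1}{n}\sum_{i=1}^{n} E\bigl[E[M_i^2 \mid \mathscr{F}_{i-1}]\bigr] \;=\; \frac{1}{n}\sum_{i=1}^{n} E[M_i^2],
\end{equation*}
which tends to $0$ as $n \to \infty$ by $\boldsymbol{\rm(A2')}$. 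Then for any $\varepsilon > 0$, Markov's inequality yields
\begin{equation*}
\operatorname{Prob}(T_n \geq \varepsilon) \;\leq\; \frac{E[T_n]}{\varepsilon} \;\longrightarrow\; 0,
\end{equation*}
so $T_n \stackrel{P}{\longrightarrow} 0$, which is exactly $\boldsymbol{\rm(A2)}$. With $\boldsymbol{\rm(A1)}$, $\boldsymbol{\rm(A2)}$, and $\boldsymbol{\rm(A3)}$--$\boldsymbol{\rm(A6)}$ all in force, Theorem \ref{cor1} applies verbatim and delivers \eqref{eqn47}.

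There is essentially no obstacle here; the step that deserves the most care is merely verifying that nothing in the proof of Theorem \ref{cor1} needed the \emph{almost sure} boundedness $E[M_i^2\mid\mathscr{F}_{i-1}]\leq b$ through $\boldsymbol{\rm(A2)}$ in a way that is not recovered by $\boldsymbol{\rm(A3)}$ plus the weaker $\boldsymbol{\rm(A2')}$. A quick scan of the three steps in the proof of Theorem \ref{cor1} shows that $\boldsymbol{\rm(A2)}$ was invoked only to obtain the convergence in probability of $\frac{1}{n}\sum m_j^{1/2}$ to $0$, and that the deterministic limit \eqref{eqn61} was the genuine quantitative ingredient, while the in-probability statement $\boldsymbol{\rm(A2)}$ was required only as a technical by-product. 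Since we have recovered $\boldsymbol{\rm(A2)}$ above, every downstream step goes through unchanged.
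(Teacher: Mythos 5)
Your proposal is correct and follows essentially the same route as the paper: both reduce $\boldsymbol{\rm(A2')}$ to $\boldsymbol{\rm(A2)}$ by noting that the nonnegative average $T_n$ of conditional second moments has $E[T_n]=\frac{1}{n}\sum_{i=1}^{n}E[M_i^2]\to 0$ via the tower property, and then pass from $L_1$ convergence to convergence in probability (your explicit Markov's inequality step is just the standard proof of that implication, which the paper invokes implicitly). The only cosmetic difference is that the nonnegativity of $E[M_i^2\mid\mathscr{F}_{i-1}]$ is automatic from $M_i^2\geq 0$ and does not need the nonnegativity of $M_i$ from $\boldsymbol{\rm(A1)}$.
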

\begin{proof}
Since $E[M^2_i\mid{\mathscr{F}_{i-1}}]\geq0$, we have
\begin{eqnarray}
\frac{1}{n}\displaystyle\sum_{i=1}^{n}E[M^2_i]=E\left[\frac{1}{n}\displaystyle\sum_{i=1}^{n}E[M^2_i\mid{\mathscr{F}_{i-1}}]\right]=E\left[\left|\frac{1}{n}\displaystyle\sum_{i=1}^{n}E[M^2_i\mid{\mathscr{F}_{i-1}}]\right|\right].
\nonumber
\end{eqnarray}
This implies that $\frac{1}{n}\displaystyle\sum_{i=1}^{n}E[M^2_i\mid{\mathscr{F}_{i-1}}]\stackrel{\mathcal{L}_1}{\longrightarrow}0$.
Thus,
$\frac{1}{n}\displaystyle\sum_{i=1}^{n}E[M^2_i\mid{\mathscr{F}_{i-1}}]\stackrel{P}{\longrightarrow}0$.
This shows that $\boldsymbol{\rm(A2')}$ and $\boldsymbol{\rm(A2)}$ are equivalent.
\end{proof}

\begin{corollary}\label{cor5}
Let $\{X_i, \mathscr{F}_i\}_{i=1}^{\infty}$ be a martingale difference sequence on $C(\mathcal{S})$. Suppose that $\boldsymbol{\rm(A4)}$ and $\boldsymbol{\rm(A5)}$ in Theorem $\ref{cor1}$ are satisfied. Assume that

\noindent $\boldsymbol{\rm(A6')}$
There exists a real nonnegative random sequence $\{\varsigma_i\}$ on $(\Omega,\mathscr{F},P)$ such that
$\displaystyle\lim_{n\rightarrow\infty}\frac{1}{n}\displaystyle\sum_{i=1}^{n}E[\varsigma^2_i]=0$
and $\eqref{eqn46}$ holds for any $i\in\mathbb{N}$.

\noindent $\boldsymbol{\rm(A7)}$ For any $i\in\mathbb{N}$, there exists a constant $\bar{b}>0$ such that $E[\varsigma^2_i\mid{\mathscr{F}_{i-1}}]\leq\bar{b}$ a.s.

\noindent Then, $\eqref{eqn47}$ holds.
\end{corollary}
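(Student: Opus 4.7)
The plan is to reduce Corollary $\ref{cor5}$ to Corollary $\ref{cor4}$ by choosing the auxiliary scaling $\beta$ in condition $\boldsymbol{\rm(A1)}$ to be the constant function $\beta \equiv 1$. With this choice $Y_i(s)=|X_i(s)|$ and $\sup_{s\in\mathcal{S}}|\beta^{-1}(s)|=1<\infty$, so the only remaining construction is to produce a suitable dominating sequence $\{M_i\}$ from the $\{\varsigma_i\}$ supplied by $\boldsymbol{\rm(A6')}$.

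Since the continuous pseudo-distance $\rho$ lives on the compact metric space $(\mathcal{S},d)$, it is bounded on $\mathcal{S}\times\mathcal{S}$; set $D:=\sup_{s_1,s_2\in\mathcal{S}}\rho(s_1,s_2)<\infty$ and define $M_i:=D\varsigma_i$. The central step is the verification of $\boldsymbol{\rm(A1)}$: by the reverse triangle inequality combined with $\boldsymbol{\rm(A6')}$,
\[
|Y_i(s_1)-Y_i(s_2)| = \bigl||X_i(s_1)|-|X_i(s_2)|\bigr| \leq |X_i(s_1)-X_i(s_2)| \leq \varsigma_i\,\rho(s_1,s_2) \leq D\varsigma_i = M_i \quad \mathrm{a.s.}
\]

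The remaining hypotheses of Corollary $\ref{cor4}$ transfer directly. Condition $\boldsymbol{\rm(A3)}$ follows from $\boldsymbol{\rm(A7)}$ since $E[M_i^2\mid \mathscr{F}_{i-1}] = D^2 E[\varsigma_i^2\mid\mathscr{F}_{i-1}] \leq D^2\bar{b}$ a.s., so one may take $b:=D^2\bar{b}$. Condition $\boldsymbol{\rm(A2')}$ becomes $\frac{1}{n}\sum_{i=1}^{n}E[M_i^2] = D^2\cdot\frac{1}{n}\sum_{i=1}^{n}E[\varsigma_i^2]\to 0$, which is exactly the limit assumed in $\boldsymbol{\rm(A6')}$. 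The supremum bound $\sup_n\frac{1}{n}\sum E[\varsigma_i^2]<\infty$ appearing inside $\boldsymbol{\rm(A6)}$ of Theorem $\ref{cor1}$ is automatic from the convergence in $\boldsymbol{\rm(A6')}$, while the entropy integral condition on $\rho$ and the Lipschitz-type inequality $\eqref{eqn46}$ are inherited unchanged. Hypotheses $\boldsymbol{\rm(A4)}$ and $\boldsymbol{\rm(A5)}$ are assumed outright.

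There is no substantive obstacle; the only conceptual point is recognizing that taking $\beta\equiv 1$ together with the finiteness of the $\rho$-diameter of the compact set $\mathcal{S}$ converts the local, direction-dependent Lipschitz bound of $\boldsymbol{\rm(A6')}$ into the global uniform bound required by $\boldsymbol{\rm(A1)}$. Once this reduction is in place, Corollary $\ref{cor4}$ applies verbatim and yields $\eqref{eqn47}$.
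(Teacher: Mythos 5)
Your proposal is correct and follows essentially the same route as the paper: the paper likewise takes $\beta$ to be a (nonzero) constant, sets $M_i=\varsigma_i\sup_{s_1,s_2\in\mathcal{S}}\rho(s_1,s_2)$ using the boundedness of $\rho$ on the compact $\mathcal{S}$, and reduces to Corollary \ref{cor4} by checking $\boldsymbol{\rm(A1)}$--$\boldsymbol{\rm(A3)}$ and $\boldsymbol{\rm(A6)}$. Your write-up is in fact slightly more explicit than the paper's, spelling out the reverse-triangle-inequality verification of $\boldsymbol{\rm(A1)}$ that the paper dismisses as obvious.
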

\begin{proof}
Let $\beta(s)=\hat{\beta}$, where $\hat{\beta}$ is an arbitrary nonzero constant.
Since $\mathcal{S}$ is compact, $\displaystyle\sup_{s_1,s_2\in\mathcal{S}}\rho(s_1,s_2)<\infty$.
To simplify notation, we denote $\zeta=\displaystyle\sup_{s_1,s_2\in\mathcal{S}}\rho(s_1,s_2)$.
Moreover, let $M_i={\varsigma_i}\zeta$ and  $Y_i(s)=|\hat{\beta}X_i(s)|$.
Hence $\boldsymbol{\rm(A1)}$ is obvious.
By $\boldsymbol{\rm(A6')}$,
\begin{eqnarray}
\displaystyle\lim_{n\rightarrow\infty}\frac{1}{n}\displaystyle\sum_{i=1}^{n}E[M^2_i]=\zeta^2 \displaystyle\lim_{n\rightarrow\infty}\frac{1}{n}\displaystyle\sum_{i=1}^{n}E[\varsigma^2_i]=0.
\nonumber
\end{eqnarray}
It follows from $\boldsymbol{\rm(A7)}$ that
\begin{eqnarray}
E[M^2_i\mid{\mathscr{F}_{i-1}}]=\zeta^2 E[\varsigma^2_i\mid{\mathscr{F}_{i-1}}]\leq\zeta^2\bar{b}.
\nonumber
\end{eqnarray}
Thus, $\boldsymbol{\rm(A2')}$ and $\boldsymbol{\rm(A3)}$ are verified.
$\boldsymbol{\rm(A6)}$ can be directly derived from $\boldsymbol{\rm(A6')}$.
Applying Theorem $\ref{cor1}$ and Corollary $\ref{cor4}$, the proof is complete.
\end{proof}

\subsection{Main Results}
\label{sec4.2}
We discuss the main theorems in this subsection,
that is, the asymptotics of the optimal value for SAA with AMIS.
Here, for consistency of notation, we use $C(\mathcal{X})$ to stand for the space of continuous functions on $\mathcal{X}$, equipped with the sup-norm.
Just like before, $\mathcal{T}=\displaystyle\arg\min_{x\in\mathcal{X}}f(x)$.
\begin{theorem}\label{the7}
Let $\mathcal{X}$ be a compact set on ${\mathbb{R}^n}$ and Assumptions $\ref{ass1}$-$\ref{ass2}$ hold.
Assume that the following statements hold.\par
\noindent $\boldsymbol{\rm(B1)}$
For some $x_0\in\mathcal{X}$, Assumptions $\ref{ass3}$-$\ref{ass4}$ are satisfied and there exists a constant $\tilde{c}>f^2(x_0)$ such that
\begin{eqnarray}\label{eqn31}
\frac{1}{n}\displaystyle\sum_{i=1}^{n}E\left[\bigg(\frac{F(x_0,\theta_i)}{\psi_i(\theta_i)}\bigg)^2\middle\vert\,{\mathscr{G}_{i-1}}\right]\stackrel{P}{\longrightarrow}\tilde{c}.
\nonumber
\end{eqnarray}

\noindent $\boldsymbol{\rm(B2)}$
There exists an integrable function ${\alpha}:\Theta\rightarrow\mathbb{R}_+$ such that for any $x_1,x_2\in\mathcal{X}$
\begin{eqnarray}\label{eqn26}
|F(x_1,\theta)-F(x_2,\theta)|\leq{\alpha(\theta)}\|x_1-x_2\| ~~{\rm{a.s.}}
\end{eqnarray}
with $\displaystyle\sup_{n\in\mathbb{N}}\frac{1}{n}\displaystyle\sum_{i=1}^{n}E\left[\left(\frac{\alpha(\theta)}{\psi_i(\theta)}\right)^2\right]<\infty$.

\noindent $\boldsymbol{\rm(B3)}$
There exists a real nonnegative random sequence $\{\mathcal{A}_i\}$ on $(\Omega,\mathscr{F},P)$ and a function $\mathcal{V}:\mathcal{X}\rightarrow\mathbb{R}_+$  such that for any $i\in\mathbb{N}$ and any $x\in\mathcal{X}$,
\begin{eqnarray}
|F(x,\theta)-f(x)\psi_i(\theta)|\leq\mathcal{V}(x){\mathcal{A}_i(\theta)}~~{\rm{a.s.}},&&\label{eqn60}
\\
\frac{1}{n}\displaystyle\sum_{i=1}^{n}E\left[\bigg(\frac{\mathcal{A}_i(\theta_i)}{\psi_i(\theta_i)}\bigg)^2\middle\vert\,{\mathscr{G}_{i-1}}\right]\stackrel{P}{\longrightarrow}0,&&
\nonumber
\\
{\rm{and}}~~~~~~~~~~~~~~~~~~~~~~~~~~~~~~~
E\left[\bigg(\frac{\mathcal{A}_i(\theta_i)}{\psi_i(\theta_i)}\bigg)^2\middle\vert\,{\mathscr{G}_{i-1}}\right]\leq\tilde{b}~~{\rm{a.s.}},&&~~~~~~~~~~~~~~~~~~~~~~~~~~
\nonumber
\end{eqnarray}
where $\tilde{b}$ is a positive constant and $\displaystyle\sup_{x\in\mathcal{X}}\mathcal{V}(x)<\infty$..

\noindent Then
\begin{eqnarray}
&&\vartheta_n=\inf_{x\in\mathcal{T}}f_n(x)+o_p(\textstyle{\frac{1}{\sqrt{n}}}),\label{eqn27}~~~~~~~~~~~~~~~~~~~
\\
&&{\sqrt{n}}(\vartheta_n-\vartheta)\stackrel{\mathcal{D}}{\longrightarrow}\displaystyle\inf_{x\in\mathcal{T}}Y(x),\label{eqn48}  ~~~~~~{\rm{as}}~~ n\rightarrow\infty,
\end{eqnarray}
where $Y(x)\sim\mathcal{N}(0,\sigma^2(x))$ , i.e., $Y(x)$ is a normal distribution with mean $0$ and variance $\sigma^2(x)=[\tilde{c}-f^2(x_0)]\frac{\mathcal{V}^2(x)}{\mathcal{V}^2(x_0)}$.
In particular, if $\mathcal{T}=\{\hat{x}\}$ is a singleton, then
\begin{eqnarray}\label{eqn28}
~~~~~~{\sqrt{n}}(\vartheta_n-\vartheta)\stackrel{\mathcal{D}}{\longrightarrow}\mathcal{N}(0,\sigma^2(\hat{x})),  ~~~~~~{\rm{as}}~~ n\rightarrow\infty.
\end{eqnarray}
\end{theorem}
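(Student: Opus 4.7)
The plan is to follow Shapiro's Delta-method strategy: view the optimal-value operator $V:C(\mathcal{X})\to\mathbb{R}$, $V(\phi):=\inf_{x\in\mathcal{X}}\phi(x)$, establish weak convergence of $\sqrt{n}(f_n-f)$ in $C(\mathcal{X})$ to a Gaussian process $Y$ via the functional CLT of Theorem $\ref{cor1}$, and then transfer this convergence through $V$ by means of the Delta theorem (Theorem $\ref{the3}$), using Corollary $\ref{cor2}$ to compute the directional derivative of $V$ at $f$.

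The first step is to verify the hypotheses of Theorem $\ref{cor1}$ for the random elements $X_i:=\Upsilon_i\in C(\mathcal{X})$, which form a martingale difference sequence with respect to $\{\mathscr{G}_i\}$ by Lemma $\ref{lem2}$. I take $s_0=x_0$, the Euclidean metric for $\rho$, $\beta(x):=1/\mathcal{V}(x)$ (so $\sup_x|\beta^{-1}(x)|=\sup_x\mathcal{V}(x)<\infty$ by $\boldsymbol{\rm(B3)}$), and $M_i:=2\mathcal{A}_i(\theta_i)/\psi_i(\theta_i)$. Condition $\boldsymbol{\rm(A1)}$ follows from $\eqref{eqn60}$, which gives $|\beta(x)\Upsilon_i(x)|\leq \mathcal{A}_i(\theta_i)/\psi_i(\theta_i)$ and hence a triangle-inequality bound by $M_i$; $\boldsymbol{\rm(A2)}$ and $\boldsymbol{\rm(A3)}$ follow from the two remaining pieces of $\boldsymbol{\rm(B3)}$; $\boldsymbol{\rm(A4)}$ is supplied by Lemma $\ref{lem3}$ applied at $x_0$ with Assumptions $\ref{ass3}$--$\ref{ass4}$ in force via $\boldsymbol{\rm(B1)}$; $\boldsymbol{\rm(A5)}$ with the positive constant $c=\tilde{c}-f^2(x_0)$ follows by expanding $E[\Upsilon_i^2(x_0)\mid\mathscr{G}_{i-1}]$ and invoking $\boldsymbol{\rm(B1)}$; and $\boldsymbol{\rm(A6)}$ is read off from $\boldsymbol{\rm(B2)}$ after observing that $f$ itself is Lipschitz with constant $L_f=\int\alpha\,\mathrm{d}\theta$, so that $|\Upsilon_i(x_1)-\Upsilon_i(x_2)|\leq(\alpha(\theta_i)/\psi_i(\theta_i)+L_f)\|x_1-x_2\|$, while the entropy integral over any bounded subset of $\mathbb{R}^n$ with the Euclidean metric is finite. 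Theorem $\ref{cor1}$ then gives $\sqrt{n}(f_n-f)\stackrel{\mathcal{D}}{\longrightarrow}Y$ in $C(\mathcal{X})$, where $Y$ is centered Gaussian with $Y(x)\sim\mathcal{N}(0,c(\beta(x_0)/\beta(x))^2)=\mathcal{N}(0,\sigma^2(x))$.

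For the second step, Corollary $\ref{cor2}$ applied to $V(\phi)=\inf_{x\in\mathcal{X}}\phi(x)$ with the compact index set $\mathcal{X}$ yields that $V$ is locally Lipschitz and directionally differentiable on $C(\mathcal{X})$ with $V'(f,d)=\inf_{x\in\mathcal{T}}d(x)$; Proposition $\ref{pro3}$ then upgrades this to Hadamard directional differentiability. Applying Theorem $\ref{the3}$ with $\mathscr{Z}_n=f_n$, $l=f$, and $\varsigma_n=\sqrt{n}$ delivers both $\sqrt{n}(\vartheta_n-\vartheta)\stackrel{\mathcal{D}}{\longrightarrow}\inf_{x\in\mathcal{T}}Y(x)$, which is $\eqref{eqn48}$, and the expansion $\sqrt{n}(\vartheta_n-\vartheta)=\sqrt{n}\inf_{x\in\mathcal{T}}(f_n(x)-f(x))+o_p(1)$, which rearranges to $\eqref{eqn27}$ using $\inf_{x\in\mathcal{T}}f(x)=\vartheta$. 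The singleton case $\eqref{eqn28}$ is then immediate since $\inf_{x\in\{\hat{x}\}}Y(x)=Y(\hat{x})$. I expect the main obstacle to be the coordinated verification of $\boldsymbol{\rm(A1)}$ and $\boldsymbol{\rm(A6)}$: one must align the $\beta$-weighted fluctuation bound from $\boldsymbol{\rm(B3)}$ with the Lipschitz/entropy side from $\boldsymbol{\rm(B2)}$ so that the variance produced by Theorem $\ref{cor1}$ lines up with the target $\sigma^2(x)=(\tilde{c}-f^2(x_0))\mathcal{V}^2(x)/\mathcal{V}^2(x_0)$, and in particular to confirm that the constant $c$ extracted from $\boldsymbol{\rm(B1)}$ is strictly positive, which is exactly the role of the hypothesis $\tilde{c}>f^2(x_0)$.
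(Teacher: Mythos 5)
Your proposal is correct and follows essentially the same route as the paper's own proof: verifying the hypotheses of Theorem \ref{cor1} for $\Upsilon_i$ with $\beta=1/\mathcal{V}$, $s_0=x_0$, $c=\tilde{c}-f^2(x_0)$, and $M_i$ built from $\mathcal{A}_i/\psi_i$ (the paper takes $M_i=\mathcal{A}_i(\theta_i)/\psi_i(\theta_i)$ without your factor of $2$, since both $\beta$-weighted terms already lie in $[0,\mathcal{A}_i/\psi_i]$, but your bound is equally valid), and then applying Corollary \ref{cor2}, Proposition \ref{pro3}, and the Delta theorem to the infimum functional exactly as the paper does.
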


\begin{proof}
The idea of the proof is similar to that of Theorem 5.7 in $\cite{7}$. The associated functional CLT and Delta Theorem need to be used.\par
Obviously, for any $x\in\mathcal{X}$, the expectation function $f(x)$ is finite and $\{\Upsilon_i(x), \mathscr{G}_i\}$ is a martingale difference sequence by Lemma $\ref{lem2}$.
In the following, we show that $\Upsilon_i$ satisfies the conditions in Theorem $\ref{cor1}$.

First, integrating over $\eqref{eqn26}$, we get
\begin{eqnarray}\label{eqn30}
~~~~~~~~|f(x_1)-f(x_2)|\leq\tilde{\alpha}\|x_1-x_2\|,~~~~\forall x_1,x_2\in\mathcal{X},
\end{eqnarray}
where
$\tilde{\alpha}:=\displaystyle\int_{\Theta}{\alpha(\theta)}{\mathrm{d}{\theta}}$. Obviously, $\tilde{\alpha}>0$.
Therefore, $f(x)$ is Lipschitz continuous on the compact set $\mathcal{X}$.
This implies that $\vartheta$ and $\mathcal{T}$ are nonempty.
Obviously, we have $f\in C(\mathcal{X})$ and $F(\cdot,\theta)\in C(\mathcal{X})$ for any given $\theta\in\Theta$.
In fact, it is easy to see that
\begin{eqnarray}
\Upsilon_i(x)=\begin{cases}\frac{F(x,\theta_i)}{\psi_i(\theta_i)}-f(x),~~~~~~\theta_i\in{\Theta},
\\~~~-f(x),~~~~~~~~~{\theta_i\in{\Theta_i}\backslash{\Theta}}.
\end{cases}
\nonumber
\end{eqnarray}
Therefore, $\Upsilon_i\in C(\mathcal{X})$.
Then, $f_n\in C(\mathcal{X})$ and $H_n=f_n-f\in C(\mathcal{X})$.\par
It follows from $\eqref{eqn26}$ and $\eqref{eqn30}$ that
\begin{eqnarray}
|\Upsilon_i(x_1)-\Upsilon_i(x_2)|&=&\left|\frac{F(x_1,\theta_i)}{\psi_i(\theta_i)}-f(x_1)-\frac{F(x_2,\theta_i)}{\psi_i(\theta_i)}+f(x_2)\right|
\nonumber \\
&\leq&\left|\frac{F(x_1,\theta_i)}{\psi_i(\theta_i)}-\frac{F(x_2,\theta_i)}{\psi_i(\theta_i)}\right|+|f(x_1)-f(x_2)|
\nonumber \\
&\leq&\left(\frac{\alpha(\theta_i)}{\psi_i(\theta_i)}+\tilde{\alpha}\right)\|x_1-x_2\|~~~~~~{\rm{a.s.}}
\nonumber
\end{eqnarray}
Denote $\varsigma_i=\frac{\alpha(\theta_i)}{\psi_i(\theta_i)}+\tilde{\alpha}$.
Obviously, $\varsigma_i>0$ a.s., so $\eqref{eqn46}$ in Theorem $\ref{cor1}$ is satisfied.
Further, we have
\begin{eqnarray}
\displaystyle\sup_{n\in\mathbb{N}}\frac{1}{n}\displaystyle\sum_{i=1}^{n}E\left[\varsigma_i^2\right]\leq
\displaystyle\sup_{n\in\mathbb{N}}\frac{2}{n}\displaystyle\sum_{i=1}^{n}E\left[\left(\frac{\alpha(\theta)}{\psi_i(\theta)}\right)^2\right]+2\tilde{\alpha}^2<\infty.
\nonumber
\end{eqnarray}
Therefore, $\boldsymbol{\rm(A6)}$ is verified.

Since for $x_0\in\mathcal{X}$ and all $i\in\mathbb{N}$,
\begin{eqnarray}
E[\Upsilon^2_i(x_0)\mid{\mathscr{G}_{i-1}}]&=&\displaystyle\int_{\Theta_i}{\Upsilon^2_i(x_0)}{\psi_{i}(\theta_{i})}{\mathrm{d}{\theta_{i}}}
\nonumber \\
&=&\displaystyle\int_{\Theta_i}{\left[{\frac{F(x_0,\theta_{i})}{\psi_{i}(\theta_{i})}-f(x_0)}\right]^2}{\psi_{i}(\theta_{i})}{\mathrm{d}{\theta_{i}}}
\nonumber \\
&=&\displaystyle\int_{\Theta_i}\left[\frac{F^2(x_0,\theta_{i})}{\psi_{i}(\theta_{i})}-2f(x_0){F(x_0,\theta_{i})}+f^2(x_0){\psi_{i}(\theta_{i})}\right]{\mathrm{d}{\theta_{i}}}
\nonumber \\
&=&E\left[\bigg(\frac{F(x_0,\theta_i)}{\psi_i(\theta_i)}\bigg)^2\middle\vert\,{\mathscr{G}_{i-1}}\right]-f^2(x_0)~~~~~~{\rm{a.s.}},
\nonumber
\end{eqnarray}
combined with $\boldsymbol{\rm(B1)}$, it is not difficult to verify that $E[\Upsilon^2_i(x_0)\mid{\mathscr{F}_{i-1}}]$  satisfies $\boldsymbol{\rm(A5)}$. 
Moreover, $\Upsilon_i(x_0)$ satisfies $\boldsymbol{\rm(A4)}$ by Lemma $\ref{lem3}$.

Let $M_i=\frac{\mathcal{A}_i(\theta_i)}{\psi_i(\theta_i)}$.
According to $\eqref{eqn60}$, we have
\begin{eqnarray}
\left|\frac{|\Upsilon_i(x_1)|}{\mathcal{V}(x_1)}-\frac{|\Upsilon_i(x_2)|}{\mathcal{V}(x_2)}\right|\leq M_i~~{\rm{a.s.}}
\nonumber
\end{eqnarray}
Thus, $\boldsymbol{\rm(A1)}$-$\boldsymbol{\rm(A3)}$ are verified from $\boldsymbol{\rm(B3)}$.
From the above discussion, $\Upsilon_i$ satisfies the conditions in Theorem $\ref{cor1}$.
Then, applying Theorem $\ref{cor1}$, there exists a Gaussian measure $Y$ on $C(\mathcal{X})$ such that for any $x\in\mathcal{X}$,
\begin{eqnarray}\label{eqn50}
~~~~~~~~~~\frac{1}{\sqrt{n}}\displaystyle\sum_{i=1}^{n}\Upsilon_{i}(x)={\sqrt{n}}H_n(x)\stackrel{\mathcal{D}}{\longrightarrow}Y(x)  ~~~~~~{\rm{as}}~~ n\rightarrow\infty.
\end{eqnarray}
According to the properties of martingale difference sequences, it follows that $Y(x)\sim\mathcal{N}(0,\sigma^2(x))$ (see Remark $\ref{rem4}$).

Let $R(V):=\displaystyle\inf_{x\in\mathcal{X}}V(x)$, where $V\in C(\mathcal{X})$.
Since $\mathcal{X}$ is compact, $R(\cdot)$ is a real value and measurable (with respect to the Borel $\sigma$-algebras) on $C(\mathcal{X})$.
It can be verified that $R(\cdot)$ is Lipschitz continuous.
Specifically, since for any $V_1,V_2\in C(\mathcal{X})$, there must exist $x_1,x_2\in\mathcal{X}$ such that $V_1(x_1)=\displaystyle\inf_{x\in\mathcal{X}}V_1(x)$
and $V_2(x_2)=\displaystyle\inf_{x\in\mathcal{X}}V_2(x)$.
Thus,
\begin{eqnarray}
|R(V_1)-R(V_2)|&=&\left|\displaystyle\inf_{x\in\mathcal{X}}V_1(x)-\displaystyle\inf_{x\in\mathcal{X}}V_2(x)\right|
\nonumber \\
&\leq&\max\{\left|V_1(x_1)-V_2(x_1)\right|,\left|V_1(x_2)-V_2(x_2)\right|\}
\nonumber \\
&\leq&\displaystyle\sup_{x\in\mathcal{X}}\left|V_1(x)-V_2(x)\right|
\nonumber \\
&=&\|V_1-V_2\|.
\nonumber
\end{eqnarray}
On the other hand, $V(x)$ is differentiable with respect to itself for every $x\in\mathcal{X}$.
Then, according to Corollary $\ref{cor2}$, $R(\cdot)$ is directionally differentiable.
Furthermore, for any given $\Gamma\in{C(\mathcal{X})}$, the directional derivative of $R(V)$ at $\Gamma$ is
\begin{eqnarray}
R'(V,\Gamma)=\displaystyle\inf_{x\in\tilde{\Lambda}(\Gamma)}V(x),
\nonumber
\end{eqnarray}
where $\tilde{\Lambda}(\Gamma):=\displaystyle\arg\min_{x\in\mathcal{X}}\Gamma(x)$ and $V\in C(\mathcal{X})$ is arbitrary.
Thus, by Proposition $\ref{pro3}$, $R(\cdot)$ is Hadamard directionally differentiable, and $R'_{\mathscr{H}}(V,\Gamma)=R'(V,\Gamma)$.\par
\par
As discussed above,
the mapping $R: C(\mathcal{X})\rightarrow{\mathbb{R}}$ is Hadamard directionally differentiable at $Y\in{C(\mathcal{X})}$, and $R'_{\mathscr{H}}(f,Y)=\displaystyle\inf_{x\in\tilde{\Lambda}(f)}Y(x)=\displaystyle\inf_{x\in\mathcal{T}}Y(x)$.
Further, $\sqrt{n}$ and $f_n-f\in{C(\mathcal{X})}$ satisfy $\sqrt{n}\rightarrow\infty$ and $\sqrt{n}(f_n-f)\stackrel{\mathcal{D}}{\longrightarrow}Y$ as $n\rightarrow\infty$, respectively.
It follows from the definition of $R(\cdot)$ and $\tilde{\Lambda}(\cdot)$ that
$\vartheta=R(f)$, $\vartheta_n=R(f_n)$ and $\mathcal{T}=\tilde{\Lambda}(f)$.
Thus, applying Theorem $\ref{the3}$, we have
\begin{eqnarray}
&&{\sqrt{n}}(\vartheta_n-\vartheta)\stackrel{\mathcal{D}}{\longrightarrow}\displaystyle\inf_{x\in\mathcal{T}}Y(x),  ~~~~~~{\rm{as}}~~ n\rightarrow\infty,
\nonumber \\
\rm{and}~~~~~~~~~~~~~~~~~~~&&{\sqrt{n}}(\vartheta_n-\vartheta)=\inf_{x\in\mathcal{T}}{\sqrt{n}}\big(f_n(x)-f(x)\big)+o_p(\textstyle{1}).~~~~~~~~~~~~~~~~~~~~~~~~
\nonumber
\end{eqnarray}
In addition, since $\vartheta=R(f)=\displaystyle\inf_{x\in\mathcal{X}}f(x)=\displaystyle\inf_{x\in\mathcal{T}}f(x)$, we get
\begin{eqnarray}
{\sqrt{n}}\vartheta_n=\inf_{x\in\mathcal{T}}{\sqrt{n}}f_n(x)+o_p(\textstyle{1}).
\nonumber
\end{eqnarray}
Therefore, $\eqref{eqn27}$ is proved.\par
Finally, if $\mathcal{T}=\{\hat{x}\}$ is a singleton, then $\displaystyle\inf_{x\in\mathcal{T}}Y(\hat{x})=\mathcal{N}(0,\sigma^2(\hat{x}))$.
Thus, $\eqref{eqn28}$ follows from $\eqref{eqn48}$.
The proof is complete.
\end{proof}

A brief remark about $Y(x)\sim\mathcal{N}(0,\sigma^2(x))$ in the above theorem is given below.
Those who are familiar with martingale difference sequences can skip it.
\begin{remark}\label{rem4}
Since $\{\Upsilon_i(x), \mathscr{G}_i\}$ is a martingale difference sequence for a given $x\in\mathcal{X}$, by definition,
$E[\Upsilon_i(x)\mid{\mathscr{G}_{i-1}}]=0$.
By the law of total expectation, we obtain
\begin{eqnarray}
E\left[\frac{1}{\sqrt{n}}\displaystyle\sum_{i=1}^{n}\Upsilon_i(x)\right]=\frac{1}{\sqrt{n}}\displaystyle\sum_{i=1}^{n}E[\Upsilon_i(x)]=\frac{1}{\sqrt{n}}\displaystyle\sum_{i=1}^{n}E\left[E[\Upsilon_i(x)\mid{\mathscr{G}_{i-1}}]\right]=0,
\nonumber
\end{eqnarray}
and for any $i,j\in\mathbb{N}$ with $i>j$,
\begin{eqnarray}
E[\Upsilon_i(x)\Upsilon_j(x)]=E\left[E[\Upsilon_i(x)\Upsilon_j(x)\mid{\mathscr{G}_{j}}]\right]=E\left[\Upsilon_j(x)E[\Upsilon_i(x)\mid{\mathscr{G}_{j}}]\right]=0.
\nonumber
\end{eqnarray}
Furthermore,
\begin{eqnarray}
{\rm{Var}}[\Upsilon_j(x)]=E[\Upsilon^2_j(x)],&&~~~~~\forall{j\in\mathbb{N}},
\nonumber
\\
{\rm{Var}}[\Upsilon_i(x)+\Upsilon_j(x)]=E[\Upsilon^2_i(x)]+E[\Upsilon^2_j(x)],&&~~~~~\forall{i,j\in\mathbb{N}}~{\rm{and}}~i\neq{j}.
\nonumber
\end{eqnarray}
Hence
\begin{eqnarray}
{\rm{Var}}\left[\frac{1}{\sqrt{n}}\displaystyle\sum_{i=1}^{n}\Upsilon_i(x)\right]=\frac{1}{n}\displaystyle\sum_{i=1}^{n}{\rm{Var}}[\Upsilon_i(x)]=\frac{1}{n}\displaystyle\sum_{i=1}^{n}E[\Upsilon^2_i(x)].
\nonumber
\end{eqnarray}
We claim that for any $x\in\mathcal{X}$, $\sigma^2(x)>0$.
In fact, $\sigma^2$ is equivalent to $\varphi$ in Theorem $\ref{cor1}$.
By the proof of the convergence of $\{V_n(s)\}_{n=1}^\infty$ in $\boldsymbol{\rm{Step1}}$ of Theorem $\ref{cor1}$, we have $\sigma^2(x)=\displaystyle\lim_{n\rightarrow\infty}\frac{1}{n}\displaystyle\sum_{i=1}^{n}E[\Upsilon^2_i(x)]=[\tilde{c}-f^2(x_0)]\frac{\mathcal{V}^2(x)}{\mathcal{V}^2(x_0)}>0$.
Therefore, $E[Y(x)]=0$ and ${\rm{Var}}[Y(x)]=\sigma^2(x)$.
Notice that $Y$ is a Gaussian measure on $C(\mathcal{X})$.
Then, $Y(x)\sim\mathcal{N}(0,\sigma^2(x))$.
\end{remark}

We obtain the following result by slightly modifying $\boldsymbol{\rm(B3)}$.
\begin{corollary}\label{cor6}
Suppose that all assumptions are as in Theorem $\ref{the7}$. Only replace $\boldsymbol{\rm(B3)}$ by the following condition.\par
\noindent $\boldsymbol{\rm(B3')}$
$f(x)\neq0$ for all $x\in\mathcal{X}$ and
there exists a real nonnegative random sequence $\{\mathcal{M}_i\}$ on $(\Omega,\mathscr{F},P)$ such that for any $i\in\mathbb{N}$ and $x_1,x_2\in\mathcal{X}$,
\begin{eqnarray}
\left|\frac{F(x_1,\theta_i)}{f(x_1)\psi_i(\theta_i)}-\frac{F(x_2,\theta_i)}{f(x_2)\psi_i(\theta_i)}\right|\leq\mathcal{M}_i~~{\rm{a.s.}}
\nonumber
\end{eqnarray}
with $\frac{1}{n}\displaystyle\sum_{i=1}^{n}E[\mathcal{M}^2_i\mid{\mathscr{G}_{i-1}}]\stackrel{P}{\longrightarrow}0$ and
$E[\mathcal{M}^2_i\mid{\mathscr{G}_{i-1}}]\leq b$ a.s., where $b$ is a positive constant.

\noindent
Then, $\eqref{eqn27}$ and $\eqref{eqn48}$ hold.
\end{corollary}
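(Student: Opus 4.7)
The plan is to show that $\boldsymbol{\rm(B3')}$ implies, via a simple change of scaling, the hypotheses $\boldsymbol{\rm(A1)}$--$\boldsymbol{\rm(A3)}$ of Theorem~\ref{cor1} when applied to the martingale difference sequence $\{\Upsilon_i,\mathscr{G}_i\}$; the remaining conditions $\boldsymbol{\rm(A4)}$--$\boldsymbol{\rm(A6)}$ and the subsequent Delta-theorem step are then inherited verbatim from the proof of Theorem~\ref{the7}, yielding \eqref{eqn27} and \eqref{eqn48}.

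The key reformulation is to choose $\beta(x):=1/f(x)$ in Theorem~\ref{cor1}. Since $f$ is Lipschitz, hence continuous, on the compact set $\mathcal{X}$ (as established in the proof of Theorem~\ref{the7} by integrating the Lipschitz bound \eqref{eqn26}) and $f(x)\neq 0$ on $\mathcal{X}$ by $\boldsymbol{\rm(B3')}$, the function $\beta$ is continuous, nowhere zero, and
\[
\sup_{x\in\mathcal{X}}|\beta^{-1}(x)|=\sup_{x\in\mathcal{X}}|f(x)|<\infty.
\]
From the identity
\[
\frac{\Upsilon_i(x)}{f(x)}=\frac{F(x,\theta_i)}{f(x)\psi_i(\theta_i)}-1,
\]
the bound in $\boldsymbol{\rm(B3')}$ rewrites as
\[
\left|\frac{\Upsilon_i(x_1)}{f(x_1)}-\frac{\Upsilon_i(x_2)}{f(x_2)}\right|\leq \mathcal{M}_i\quad {\rm a.s.}
\]
Setting $Y_i(x):=|\beta(x)\Upsilon_i(x)|$ and $M_i:=\mathcal{M}_i$, the reverse triangle inequality gives $|Y_i(x_1)-Y_i(x_2)|\leq \mathcal{M}_i$ a.s., which is exactly $\boldsymbol{\rm(A1)}$. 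The $P$-convergence and uniform a.s.\ bound on $E[\mathcal{M}_i^2\mid\mathscr{G}_{i-1}]$ postulated in $\boldsymbol{\rm(B3')}$ are then direct translations of $\boldsymbol{\rm(A2)}$ and $\boldsymbol{\rm(A3)}$ with this $M_i$.

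The remaining hypotheses $\boldsymbol{\rm(A4)}$--$\boldsymbol{\rm(A6)}$ of Theorem~\ref{cor1} were verified in the proof of Theorem~\ref{the7} without any appeal to $\boldsymbol{\rm(B3)}$: $\boldsymbol{\rm(A4)}$ follows from Lemma~\ref{lem3} at $x_0$ via Assumptions~\ref{ass3}--\ref{ass4} (contained in $\boldsymbol{\rm(B1)}$); $\boldsymbol{\rm(A5)}$ is a direct consequence of $\boldsymbol{\rm(B1)}$; and $\boldsymbol{\rm(A6)}$ is obtained by taking $\varsigma_i=\alpha(\theta_i)/\psi_i(\theta_i)+\tilde{\alpha}$ and invoking $\boldsymbol{\rm(B2)}$. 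Consequently Theorem~\ref{cor1} applies and yields $\sqrt{n}\,H_n\stackrel{\mathcal{D}}{\longrightarrow} Y$ in $C(\mathcal{X})$ with $Y(x)\sim\mathcal{N}(0,\sigma^2(x))$; recomputing the variance via Remark~\ref{rem4} with $\beta^{-1}=f$ gives $\sigma^2(x)=[\tilde{c}-f^2(x_0)]\,f^2(x)/f^2(x_0)$. The Delta-theorem argument in the proof of Theorem~\ref{the7}, which only requires the Hadamard directional differentiability of the infimum functional $R(V)=\inf_{x\in\mathcal{X}}V(x)$ (a fact already shown there) together with the weak convergence just obtained, then transfers verbatim to give \eqref{eqn27} and \eqref{eqn48}.

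There is no serious analytic obstacle; the only subtlety is the algebraic observation that $\boldsymbol{\rm(B3')}$ is tailor-made to supply $\boldsymbol{\rm(A1)}$ of Theorem~\ref{cor1} after absorbing $1/f(x)$ into the parameter $\beta(\cdot)$. The hypothesis $f(x)\neq 0$ on $\mathcal{X}$ is precisely what permits this reduction and is the reason it must appear explicitly in $\boldsymbol{\rm(B3')}$.
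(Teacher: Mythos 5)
Your proposal is correct and follows essentially the same route as the paper: both set $\beta(x)=1/f(x)$, use the cancellation $\beta(x)\Upsilon_i(x)=F(x,\theta_i)/(f(x)\psi_i(\theta_i))-1$ together with the reverse triangle inequality to read off $\boldsymbol{\rm(A1)}$--$\boldsymbol{\rm(A3)}$ from $\boldsymbol{\rm(B3')}$, and then reuse the verification of $\boldsymbol{\rm(A4)}$--$\boldsymbol{\rm(A6)}$ and the Delta-theorem step from Theorem~\ref{the7} with the recomputed variance $\sigma^2(x)=[\tilde{c}-f^2(x_0)]\left(f(x)/f(x_0)\right)^2$.
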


\begin{proof}
Since $\mathcal{X}$ is a compact set, combined with $\eqref{eqn30}$, we have
$\displaystyle\sup_{x\in\mathcal{X}}|f(x)|<\infty$.
Let $\beta(x)=\frac{1}{f(x)}$.
Then for any $i\in\mathbb{N}$ and $x_1,x_2\in\mathcal{X}$,
\begin{eqnarray}
\left||\beta(x_1)\Upsilon_i(x_1)|-|\beta(x_2)\Upsilon_i(x_2)|\right|&\leq&\left|\beta(x_1)\Upsilon_i(x_1)-\beta(x_2)\Upsilon_i(x_2)\right|
\nonumber \\
&\leq&\left|\frac{F(x_1,\theta_i)}{f(x_1)\psi_i(\theta_i)}-\frac{F(x_2,\theta_i)}{f(x_2)\psi_i(\theta_i)}\right|
\nonumber \\
&\leq&\mathcal{M}_i ~~{\rm{a.s.}}
\nonumber
\end{eqnarray}
According to $\boldsymbol{\rm(B3')}$, $\boldsymbol{\rm(A1)}$-$\boldsymbol{\rm(A3)}$ are verified.
Therefore, Theorem $\ref{cor1}$ can still be used.
An additional note is that $\sigma^2(x)=[\tilde{c}-f^2(x_0)]{\textstyle\left(\frac{f(x)}{f(x_0)}\right)^2}>0$.
The rest of the proof is the same as in Theorem $\ref{the7}$.
\end{proof}

The following theorem also yields a similar result of Theorem $\ref{the7}$.
The difference is that it uses another result of the functional CLT for martingale difference sequences.
\begin{theorem}\label{the8}
Let $\mathcal{X}$ be a compact set on ${\mathbb{R}^n}$ and Assumptions $\ref{ass1}$-$\ref{ass2}$ hold.
Assume that $\boldsymbol{\rm(B1)}$ is satisfied and the following statements hold.\par
\noindent $\boldsymbol{\rm(B2')}$
There exists an integrable function ${\alpha}:\Theta\rightarrow\mathbb{R}_+$ such that for any $x_1,x_2\in\mathcal{X}$, $\eqref{eqn26}$ holds.

\noindent $\boldsymbol{\rm(B4)}$
There exists a real nonnegative random sequence $\{\mathcal{\bar{A}}_i\}$ on $(\Omega,\mathscr{F},P)$ such that for any $i\in\mathbb{N}$ and $x_1,x_2\in\mathcal{X}$,
\begin{eqnarray}\label{eqn51}
~~~~~~|F(x_1,\theta)-f(x_1)\psi_i(\theta)-F(x_2,\theta)+f(x_2)\psi_i(\theta)|\leq{\mathcal{\bar{A}}_i(\theta)}\|x_1-x_2\|~~{\rm{a.s.}},
\end{eqnarray}
with $\displaystyle\lim_{n\rightarrow\infty}\frac{1}{n}\displaystyle\sum_{i=1}^{n}E\left[\left(\frac{{\mathcal{\bar{A}}_i(\theta)}}{\psi_i(\theta)}\right)^2\right]=0.
$
Moreover, there exists a constant $\hat{b}>0$ such that
\begin{eqnarray}
E\left[\bigg(\frac{{\mathcal{\bar{A}}_i(\theta)}}{\psi_i(\theta)}\bigg)^2\middle\vert\,{\mathscr{G}_{i-1}}\right]\leq\hat{b}~~{\rm{a.s.}}
\nonumber
\end{eqnarray}

\noindent
Then, $\eqref{eqn27}$ and $\eqref{eqn48}$ hold.
\end{theorem}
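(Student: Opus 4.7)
The plan is to re-run the argument of Theorem \ref{the7} almost verbatim, but to feed the functional CLT for $\sqrt{n}H_n$ in $C(\mathcal{X})$ through Corollary \ref{cor5} rather than through Theorem \ref{cor1}. The advantage of Corollary \ref{cor5} is that its hypotheses (particularly $\boldsymbol{\rm(A6')}$ and $\boldsymbol{\rm(A7)}$) are in the Lipschitz-type form that $\boldsymbol{\rm(B4)}$ supplies directly for the martingale differences $\Upsilon_i$, so no auxiliary modulus function $\mathcal{V}$ needs to be introduced.

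First I would record from Lemma \ref{lem2} that $\{\Upsilon_i(\cdot),\mathscr{G}_i\}$ is a martingale difference sequence, and note that each $\Upsilon_i$ lies in $C(\mathcal{X})$. Integrating $\eqref{eqn26}$ in $\boldsymbol{\rm(B2')}$ shows $f$ is Lipschitz on the compact set $\mathcal{X}$, so $\vartheta$ and $\mathcal{T}$ are nonempty and $f_n-f\in C(\mathcal{X})$. The decisive computation is the identity
\begin{equation*}
\Upsilon_i(x_1)-\Upsilon_i(x_2)=\frac{[F(x_1,\theta_i)-f(x_1)\psi_i(\theta_i)]-[F(x_2,\theta_i)-f(x_2)\psi_i(\theta_i)]}{\psi_i(\theta_i)},
\end{equation*}
which, combined with $\eqref{eqn51}$, yields $|\Upsilon_i(x_1)-\Upsilon_i(x_2)|\leq \varsigma_i\,\|x_1-x_2\|$ a.s., where $\varsigma_i:=\bar{\mathcal{A}}_i(\theta_i)/\psi_i(\theta_i)$. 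Taking $\rho$ to be the Euclidean metric on $\mathcal{X}\subset\mathbb{R}^n$ (whose metric entropy integral is finite by compactness), the first part of $\boldsymbol{\rm(B4)}$ gives exactly $\boldsymbol{\rm(A6')}$ and the second part gives $\boldsymbol{\rm(A7)}$. The remaining hypotheses at the base point $x_0$ are verified as in the proof of Theorem \ref{the7}: $\boldsymbol{\rm(A4)}$ is Lemma \ref{lem3} specialised at $x_0$ (using that $\boldsymbol{\rm(B1)}$ includes Assumptions \ref{ass3}--\ref{ass4}), and $\boldsymbol{\rm(A5)}$ follows from $\boldsymbol{\rm(B1)}$ via the identity $E[\Upsilon_i^2(x_0)\mid\mathscr{G}_{i-1}]=E[(F(x_0,\theta_i)/\psi_i(\theta_i))^2\mid\mathscr{G}_{i-1}]-f^2(x_0)$ already recorded in that proof.

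With those verifications in hand, Corollary \ref{cor5} produces a Gaussian measure $Y$ on $C(\mathcal{X})$ with $\sqrt{n}H_n\stackrel{\mathcal{D}}{\longrightarrow}Y$ and $Y(x)\sim\mathcal{N}(0,\sigma^2(x))$. From here the argument is identical to that of Theorem \ref{the7}: the infimum functional $R(V):=\inf_{x\in\mathcal{X}}V(x)$ is Lipschitz on $C(\mathcal{X})$ and, by Corollary \ref{cor2} together with Proposition \ref{pro3}, Hadamard directionally differentiable at $f$ with derivative $R'_{\mathscr{H}}(f,\Gamma)=\inf_{x\in\mathcal{T}}\Gamma(x)$; the Delta theorem (Theorem \ref{the3}) applied with the rate $\sqrt{n}$ then delivers $\eqref{eqn48}$, and $\vartheta=\inf_{x\in\mathcal{T}}f(x)$ gives $\eqref{eqn27}$.

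The only step I expect to need care is pinning down the limiting variance $\sigma^2(x)$ through Step 1 of the proof of Theorem \ref{cor1} under the present assumptions. Because Corollary \ref{cor5} sets $\beta$ to a constant and $M_i=\zeta\varsigma_i$ with $\frac{1}{n}\sum_{i=1}^n E[M_i^2]\to 0$, the spatial-transfer inequality analogous to $\eqref{eqn40}$ collapses and one obtains $\lim_n \frac{1}{n}\sum_{i=1}^n E[\Upsilon_i^2(x)]=\tilde c - f^2(x_0)$ independently of $x$. Hence $\sigma^2(x)\equiv \tilde c-f^2(x_0)>0$ by $\boldsymbol{\rm(B1)}$, and the conclusion $\eqref{eqn48}$ reads $\sqrt{n}(\vartheta_n-\vartheta)\stackrel{\mathcal{D}}{\longrightarrow}\inf_{x\in\mathcal{T}}Y(x)$ with constant marginal variance, which is consistent with the stronger increment control imposed by $\boldsymbol{\rm(B4)}$ relative to $\boldsymbol{\rm(B3)}$.
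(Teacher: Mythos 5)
Your proposal is correct and follows essentially the same route as the paper: verify $\boldsymbol{\rm(A6')}$ and $\boldsymbol{\rm(A7)}$ from $\boldsymbol{\rm(B4)}$ with $\varsigma_i=\bar{\mathcal{A}}_i(\theta_i)/\psi_i(\theta_i)$, get $\boldsymbol{\rm(A4)}$--$\boldsymbol{\rm(A5)}$ from $\boldsymbol{\rm(B1)}$ exactly as in Theorem \ref{the7}, invoke Corollary \ref{cor5} for the functional CLT, and finish with the Delta-theorem argument. Your closing observation that the constant choice of $\beta$ forces $\sigma^2(x)\equiv\tilde{c}-f^2(x_0)$ is a correct refinement that the paper leaves implicit.
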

\begin{proof}
The proof of this theorem is similar to the proof of Theorem $\ref{the7}$. The difference lies in the application of Corollary $\ref{cor5}$ to prove the functional CLT.
To be specific, $\eqref{eqn26}$ ensures that $\Upsilon_i\in C(\mathcal{X})$ and $f(x)$ is Lipschitz continuous.
According to $\eqref{eqn51}$,
we get
\begin{eqnarray}
|\Upsilon_i(x_1)-\Upsilon_i(x_2)|
\leq\frac{\mathcal{\bar{A}}_i(\theta)}{\psi_i(\theta)}\|x_1-x_2\|~~{\rm{a.s.}}
\nonumber
\end{eqnarray}
Let $\varsigma_i=\frac{\mathcal{\bar{A}}_i(\theta)}{\psi_i(\theta_i)}$.
Clearly, $\boldsymbol{\rm(A6')}$ and $\boldsymbol{\rm(A7)}$ are verified from $\boldsymbol{\rm(B2')}$ and $\boldsymbol{\rm(B4)}$, respectively.
$\boldsymbol{\rm(B1)}$ can derive $\boldsymbol{\rm(A4)}$ and $\boldsymbol{\rm(A5)}$, which is proved in Theorem $\ref{the7}$.
Thus, using Corollary 4.3, we have $\eqref{eqn50}$.

The rest of the proof is treated in the same way as in Theorem $\ref{the7}$.
The proof is complete.
\end{proof}

\section{Conclusion}
\label{sec5}
In this paper, we mainly focus on the stochastic optimization problem $\eqref{eqn24}$ and its counterpart problem for SAA with AMIS $\eqref{eqn25}$.
Over the past decade, the method of SAA with AMIS has been more favored.
Here, we investigate uniform exponential convergence of SAA with AMIS and asymptotics of its optimal value.
The former study draws on previous ideas from econometricians.
We establish AMMC that ensures the SE condition and obtain pointwise convergence using a concentration inequality for bounded martingale differences.
In this way, we obtain a new exponential convergence rate in Theorem $\ref{the1}$.
In order to study the asymptotics, we first derive a suitable functional CLT for martingale difference sequences (Theorem $\ref{cor1}$), by applying a CLT for martingale difference arrays.
And then Corollaries $\ref{cor4}$-$\ref{cor5}$ follow.
Finally, combined with the Delta theorem, the asymptotics of the optimal values for SAA with AMIS are proved, see Theorems $\ref{the7}$-$\ref{the8}$ and Corollary $\ref{cor6}$.

The use of the CLT for martingale difference arrays may not be thorough enough.
Extending the conditions of Theorem $\ref{cor1}$ will be our future work.
In addition, because of the good performance of SAA with AMIS, we expect it to be used in more stochastic programs.

\bibliographystyle{siamplain}
\bibliography{references}
\end{document}